\documentclass{article}
\usepackage[utf8]{inputenc}
\usepackage{amssymb}
\usepackage{amsfonts}
\usepackage{graphicx}
\usepackage{amsmath}
\usepackage{color}
\usepackage{stmaryrd} 
\usepackage[colorlinks=true,urlcolor=red, pdftitle=On the global asymptotic stability of an infection-age structured competitive model]{hyperref}
\usepackage{cleveref}
\usepackage{amsthm} 
\usepackage[comma,numbers,square,sort&compress]{natbib} 
\usepackage{geometry} 
\usepackage{enumitem} 
\usepackage{authblk} 

\newcommand{\RR}{\mathcal{R}} 
\newcommand{\R}{\mathbb{R}} 
\newcommand{\N}{\mathbb{N}} 
\newcommand{\supp}{\text{supp}} 
\newcommand{\X}{\mathcal{X}} 
\newcommand{\B}{\mathcal{B}}
\renewcommand{\L}{\mathcal{L}}
\newcommand{\Co}{\mathcal{C}} 
\newcommand{\A}{\mathcal{A}} 
\renewcommand{\S}{\mathcal{S}}
\newcommand{\ep}{\varepsilon} 
\newcommand{\V}{\mathcal{V}} 
\newcommand{\W}{\mathcal{W}}
\newcommand{\M}{\mathcal{M}}
\newcommand{\E}{\mathcal{E}^*} 
\newcommand{\J}{\mathcal{J}}
\renewcommand{\H}{\mathcal{H}}
\newcommand{\p}{\mathbf{p}}

\newtheorem{theorem}{Theorem}[section]
\newtheorem{proposition}[theorem]{Proposition}
\newtheorem{definition}[theorem]{Definition}
\newtheorem{assumption}[theorem]{Assumption}
\newtheorem{corollary}[theorem]{Corollary}

\newtheorem{lemma}[theorem]{Lemma}

\usepackage[dvipsnames]{xcolor} 

\begin{document}

\title{On the global asymptotic stability of an infection-age structured competitive model}
\date{}
\author[1]{S.~Girel}
\affil[1]{Université Côte d’Azur, CNRS, LJAD, Parc Valrose, 06108 Nice, France}
		
\author[2]{Q.~Richard}
\affil[2]{Université de Montpellier, CNRS, IMAG, 34090 Montpellier, France.}

\maketitle

\hrule

\begin{abstract}
We investigate an infection-age structured competitive epidemiological model involving multiple strains. While classical results establish competitive exclusion when a unique maximal basic reproduction number exists, we provide here a complete characterization of the asymptotic behavior for an arbitrary number of populations without assuming uniqueness of the maximal reproduction number. By means of integrated semigroups theory, persistence results, and Lyapunov functionals, we establish global asymptotic stability of equilibria and extend previous results obtained for simpler (ODE) models. A key contribution lies in overcoming technical difficulties related to the definition and differentiation of Lyapunov functionals, as well as in refining arguments based on the LaSalle invariance principle.
\end{abstract}

\textit{Keywords:} Infection-age structured model; Competitive exclusion principle; Global asymptotic stability; Lyapunov functionals; Persistence theory; Integrated semigroups theory
\vspace{0.3cm}

\hrule

\section{Introduction}

In this paper we focus on the following infection-age structured competitive model
\begin{equation}\label{Eq:model}
    \left\{
    \begin{array}{rcl}
    S'(t)&=&\Lambda-\mu_SS(t)-S(t)\displaystyle\sum_{k=1}^n\int_0^\infty \beta_k(a)x_k(t,a)da,\\
    \dfrac{\partial x_k}{\partial t}(t,a)+\dfrac{\partial x_k}{\partial a}(t,a)&=&-\mu_k(a)x_k(t,a),\\
    x_k(t,0)&=&S(t)\displaystyle\int_0^\infty \beta_k(a)x_k(t,a)da \\
    (S(0),x_1(0,.),...,x_n(0,.))&=&(S_0,x_{1,0},...,x_{n,0})\in \R_+\times (L^1_+(0,\infty))^n
    \end{array}
    \right.
\end{equation}
for every $(t,a,k)\in (\R_+^*)^2\times\llbracket 1,n \rrbracket$ where $n\geq 1$ is the number of infected populations. The case $n=1$ was investigated by Thieme and Castillo-Chavez in \cite{ThiemeCC89,ThiemeCC93} with the study of the uniform persistence of the system and the local asymptotic stability (LAS) of the endemic equilibrium. The analysis was completed by Magal, McCluskey and Webb who established the global asymptotic stability (GAS) of both the disease-free and the endemic equilibria \cite{MagCluskWebb2010}. Using the next generation operator approach \cite{Diekmann90} they derive an explicit expression for the basic reproduction number $\RR_0$ and prove that if $\RR_0\leq 1$ then the disease-free equilibrium is GAS in the whole space, while if $\RR_0>1$ then the unique endemic equilibrium is GAS in the set of initial conditions for which disease transmission occurs, and the disease-free equilibrium is GAS in the complementary set. Later, the case $n\geq 2$ was investigated by Martcheva and Li using a more general model that includes a vaccinated compartment \cite{MartchevaLi2013}. They derive a basic reproduction number $\RR_{0,k}$ for each species $k\in\llbracket 1,n\rrbracket$ and prove that if $\max_{k\in \llbracket 1,n\rrbracket}\{\RR_{0,k}\}\leq 1$ then the disease-free equilibrium is GAS in the whole space. Moreover, under the assumption that there exists $j\in\llbracket 1,n\rrbracket$ such that $\RR_{0,j}>1$ and $\RR_{0,j}>\max_{k\in\llbracket 1,n\rrbracket\setminus \{j\}}\{\RR_{0,k}\}$ then the endemic equilibrium corresponding to the survival of the strain $j$ only $(i^*_j\neq 0$ and $i^*_k=0$ if $k\neq j)$, is GAS in the set of initial conditions for which the transmission of the disease $j$ occurs. This classical result is commonly called competitive exclusion principle and was postulated first by Gause in an ecological context \cite{Gause34} and later reinterpreted in an epidemiological context by Bremermann and Thieme \cite{BremermannThieme89}. It states that $n$ competitors cannot coexist on less than $n$ resources, hence if multiple strains circulate in the population then only
the strain with the largest reproduction number persists, while the strains with suboptimal reproduction numbers are eliminated. As mentioned by the authors, the case where the maximal reproduction number is reached for multiple populations is not treated in \cite{MartchevaLi2013}. For two species $(n=2)$, this case was partly investigated in \cite{Richard2020} with an attractiveness result of the set composed of all endemic equilibria. In the absence of age-infection structure, the ODE model has been extensively studied (see \cite{Hsu78,Hsu77} for Holling type 2 models or more recently \cite{BDG23} with infinitely many variants). The asymptotic behavior of the solutions is fully characterized in all cases. Moreover, when the maximal $\RR_0$ is attained for multiple populations, the equilibrium toward which the solution converges can be explicitly determined based on the initial condition. This issue, however, remains open in the PDE setting.

The methodology employed in \cite{MagCluskWebb2010} consists first in using integrated semigroup theory (see \cite{MagalRuan2018}) to get an appropriate framework for the well-posedness of the problem. It allows to linearize the system around each equilibrium, obtain linear $\mathcal{C}_0$-semigroups, then use spectral theory to obtain the local stability of the equilibria (see \cite{EngelNagel2000,Webb85}). Afterwards the study focuses on the semiflow (see \cite{Hale1988} for definitions and results) by proving that it is both bounded-dissipative and asymptotically smooth, which implies the existence of a strong global attractor $\A$ that is LAS. The last part of the proof consists in showing that this attractor reduces to some equilibrium, by using Lyapunov functionals. The same kind of arguments are used in \cite{MartchevaLi2013}. More recently, global stability results were established in a SI epidemiological PDE model with infection load-structure \cite{Perasso2019}. This is also done by using Lyapunov functionals, however, contrarily to \cite{MagCluskWebb2010,MartchevaLi2013}, the computations are not done on the global attractor but on the omega-limit set for each initial condition. This method comes at the cost of requiring local asymptotic stability to be established independently.

In order to reduce the attractor to some equilibrium, we first must check that the Lyapunov candidate function is well defined on the attractor. This is immediate for the disease-free equilibrium, whereas strong uniform persistence results are necessary for the endemic equilibrium. In \cite{MagCluskWebb2010,MartchevaLi2013}, the weak persistence of the semiflow is readily obtained, then the uniform weak persistence is deduced from the tools developed in \cite{HaleWaltman89} (see also \cite{SmithThieme2011}). This ultimately leads to the uniform strong persistence due to the existence of a global attractor \cite{SmithThieme2011}. In \cite{Perasso2019} the author combines the weak persistence with the fact that the disease-free equilibrium is included in the omega-limit set to claim a contradiction, but there may be cases where the solution approaches the disease-free equilibrium without converging to it. This proof was unfortunately reused in \cite{Richard2020}.

The second point is to compute the derivative of the Lyapunov candidate function. As explained in \cite{MagCluskWebb2010}, initial conditions in some specific domain lead to classical solutions \cite{MagalRuan2018} of \eqref{Eq:model} and the PDE equations can be used to readily compute the derivative. For other initial conditions, the key argument is the density of the latter domain into the positive cone of the whole set as mentioned in \cite{MagCluskWebb2010}. Though it is also necessary that the semiflow is state-continuous uniformly in finite time (see \cite{SmithThieme2011}) which naturally follows from the Lipschitz property of the non-linearity. For the endemic equilibrium, this is even trickier since the sequence of initial conditions obtained by the density argument does not necessarily satisfy the estimates required for the Lyapunov candidate function to be well-defined. Hence, the construction of an \textit{ad hoc} sequence, which both belong to the domain and satisfy the estimates, is necessary. This is done in the present paper.

The third and last point is the reduction of the attractor to the equilibrium. In \cite{MagCluskWebb2010}, this is done by considering a point $z\in\A$ and a complete orbit through $z$. Using the Lyapunov function and some estimates on the alpha limit set of the complete orbit, the authors get the result for the disease-free equilibrium when $\RR_0\leq 1$. For the endemic equilibrium, the authors combine the fact that the derivative of the Lyapunov function must vanish on the alpha-limit set of $z$ (see \cite{SmithThieme2011}) which actually reduces to the endemic equilibrium (by computing the largest invariant set on which the derivative vanishes), with the LAS of the endemic equilibrium to obtain the result when $\RR_0>1$. In \cite{Perasso2019}, since the Lyapunov is computed on the omega-limit set $\omega(x)$ for the initial condition $x$, the LaSalle invariance principle does not imply that the derivative of the Lyapunov function vanishes for each point of $\omega(x)$ as claimed but only on the omega-limit (and alpha-limit) sets of each point of $\omega(x)$ (see \cite{SmithThieme2011}). It follows that the endemic equilibrium belongs to $\omega(x)$ but is not necessarily equal to it. The mistake lies in asserting that, if $y \in \omega(x)$, then $\omega(y) = \omega(x)$. A simple counter-example exists with ODEs (see \cite[Example, p.224]{Teschl2012})) where $\omega(x)$ consists of two fixed points plus the orbits joining them, while $\omega(y)$ reduces to one of the equilibria, for each $y\in\omega(x)$.

In the present paper we do not need the LAS of the endemic equilibrium. The argument follows \cite{MagCluskWebb2010,MartchevaLi2013} by considering $z\in \A$ and a complete orbit through $z$ on which the Lyapunov function is well defined. The LaSalle invariance principle implies that the derivative of the Lyapunov function vanishes both on the omega and alpha limit sets of $z$, which then both reduce to the endemic equilibrium. Using the fact that the Lyapunov is zero at the endemic equilibrium, it follows that the Lyapunov function vanishes along the complete orbit, hence the derivative vanishes for all time and the result follows. This point is of particular interest here, since the stability of the endemic equilibrium cannot be established by linearization when the maximal
$\RR_0$ is attained for multiple populations.

In the second section, we specify the assumptions and notations, remind some definitions then state the main result that is a complete description of the asymptotic behavior for $n$ species in the case where the maximal reproduction number is not necessarily unique, with global asymptotic stability results of the equilibria, extending the results obtained in \cite{MartchevaLi2013}. In the third section we prove the result by induction on $n$, using the tools described above.

\section{Assumptions and results}

For every $k\in\llbracket 1,n\rrbracket$, we define the function
$$\pi_k:a\longmapsto e^{-\int_0^a \mu_k(s)ds}$$
describing the survival probability from infection to the infection age $a$ for the population $k$. We also define the following quantities
$$\overline{\beta_k}:=\sup(\supp(\beta_k)), \qquad r_k:=\int_0^\infty \beta_k(a)\pi_k(a)da$$
which allow us to define the basic reproduction number associated to the species $k$ as
$$\RR_{0,k}=\dfrac{\Lambda r_k}{\mu_S}.$$
We will look at solutions on the
Banach space
$$X:=\R\times (L^1(0,\infty))^n$$
which is endowed with the usual norm while we denote by $X_+$ its positive cone. We also define the sets
$$\S_k:=\left\{(S_0,x_{1,0},...,x_{n,0})\in X_+: \int_0^{\overline{\beta_k}}x_{k,0}(s)ds>0\right\}, \quad \partial \S_k=X_+\setminus \S_k, \quad \forall k\in\llbracket 1,n\rrbracket.$$
\begin{assumption}\mbox{}
\label{Assum:1}
\begin{enumerate}
    \item The constants $\Lambda$ and $\mu_S$ are positive. For each $k\in\llbracket 1,n\rrbracket$ the functions $\mu_k$ and $\beta_k$ belong to $L^\infty(0,\infty)$ with $\beta_k \not\equiv 0$. Moreover, there exists $\mu_0>0$ such that $\mu_S\geq \mu_0$ and such that for each $k\in\llbracket 1,n\rrbracket$ we have
    $$\mu_k(a)\geq \mu_0 \quad \text{ a.e } \quad a\geq 0.$$
    \item For each $k\in\llbracket 1,n\rrbracket$ there exists $\underline{\beta_k}\in[0,\overline{\beta_k})$ such that 
    $$\beta_k(a)>0 \quad \text{a.e.} \quad a\in[\underline{\beta_k},\overline{\beta_k}).$$
    \item The functions $\beta_k$ are uniformly continuous.
\end{enumerate}
\end{assumption}

We note that the last assumption is to obtain the compactness of the orbits. We remind (see \cite{MagCluskWebb2010,Richard2020}) that we can define the linear operator $A:D(A)\subset \X\to \X$ by
$$A\begin{pmatrix}
S \\
\begin{pmatrix}
0 \\
x_1
\end{pmatrix} \\
\vdots \\
\begin{pmatrix}
0 \\
x_n
\end{pmatrix}
\end{pmatrix}=\begin{pmatrix}
-\mu_S S\\
\begin{pmatrix}
-x_1(0)\\
-x_1'-\mu_1 x_1
\end{pmatrix} \\
\vdots \\
\begin{pmatrix}
-x_n(0) \\
-x_n'-\mu_n x_n 
\end{pmatrix}
\end{pmatrix}$$
with $D(A)=\R\times (\{0\}\times W^{1,1}(0,\infty))^n$ and $\X=\R\times (\R\times L^1(0,\infty))^n$. We define the sets 
$\X_0=\overline{D(A)}=\R\times (\{0\}\times L^1(0,\infty))^2$ and its positive cone $\X_{0+}=\R_+\times (\{0\}\times L^1_+(0,\infty))^2$. Then we define the part of $A$ in $\X_0$ as 
$$A_0x=Ax, \ \forall x\in D(A_0):=\{x\in D(A): Ax\in \X_0\}.$$
We also define the non-linear function $F:\X_0\to \X$ by
$$F\begin{pmatrix}
S \\
\begin{pmatrix}
0 \\
x_1
\end{pmatrix} \\
\vdots \\
\begin{pmatrix}
0 \\
x_n 
\end{pmatrix}
\end{pmatrix}=\begin{pmatrix}
\Lambda-S\sum_{k=1}^n\int_0^\infty \beta_k(a)x_k(a)da \vspace{0.1cm} \\
\begin{pmatrix}
S\int_0^\infty \beta_1(a)x_1(a)da \\
0 
\end{pmatrix}\\
\vdots \\
\begin{pmatrix}
S\int_0^\infty \beta_n(a)x_n(a)da \\
0 
\end{pmatrix}
\end{pmatrix}.
$$
Finally, we define the nonlinear generator
$$(A+F)u=Au+F(u), \ \forall u\in D(A)$$
and the part of $A+F$ in $\X_0$ as
\begin{equation}\label{Eq:domain_A+F}
((A+F)_0)u=(A+F)u, \ \forall u\in D((A+F)_0)=\{u\in D(A): Au+F(u)\in \X_0\}.
\end{equation}
We can then rewrite the model \eqref{Eq:model} in the following abstract Cauchy form (not densely defined):
\begin{equation}
    \left\{
    \begin{array}{rcl}
      \dfrac{du}{dt}(t)&=&Au(t)+F(u(t)), \quad \forall t>0  \\
      u(0)&=&u_0\in \X_0. 
    \end{array}
    \right.
    \label{Eq:Cauchy-pb}
\end{equation}
Under Assumption \ref{Assum:1} we can state the following properties (see the proof of \cite[Proposition 2.1]{Richard2020} in the case $n=2$):
\begin{lemma}\mbox{} Suppose that Assumption \ref{Assum:1} holds. Then:
\label{Lemma:prop}
\begin{enumerate}
    \item $A$ is a Hille-Yosida operator with $(-\mu_0,\infty)\subset \rho(A)$, where $\rho(A)$ denotes the resolvent set of $A$ and
    $$\|(\lambda I-A)^{-n})\|_{\L(\X)}\leq \dfrac{1}{(\lambda+\mu_0)^n}, \ \forall \lambda>-\mu_0, \ \forall n\geq 1.$$
    \item $F$ is a locally Lipschitz continuous function: $\forall r>0$, $\exists K_r>0$ such that 
    $$\|F(u_0)-F(\tilde{u}_0)\|_{\X}\leq K_r\|u_0-\tilde{u}_0\|_{\X}$$
    for every $(u_0,\tilde{u}_0)\in (\X_0\cap B_{\X}(0,r))^2$ where $B_{\X}(0,r)$ is the ball centered at $0\in \X$ and radius $r$.
    \item $A$ is resolvent positive and for every $r>0$ there exists $K_r$ such that $F(u_0)+K_ru_0\in \X_+$ and $A-K_rI$ is resolvent positive for every $u_0\in \X_0\cap B_{\X}(0,r)\cap \X_+$.
\end{enumerate}
For the second and third points, the constant $K_r$ can be taken as $K_r:=2r\sum_{k=1}^n \|\beta_k\|_{L^\infty}$.
\end{lemma}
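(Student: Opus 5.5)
We prove the three points separately, following the scheme of \cite[Proposition 2.1]{Richard2020}, which covers the case $n=2$.

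\textbf{Point 1: resolvent of $A$.} We compute $(\lambda I-A)^{-1}$ explicitly. Since $A$ is block-diagonal, it suffices to treat each block. For the $S$-component the resolvent is $(\lambda+\mu_S)^{-1}$, and its modulus is at most $(\lambda+\mu_0)^{-1}$ because $\mu_S\geq\mu_0$.

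For a block $(\alpha,\psi)\in\R\times L^1(0,\infty)$ we solve $\lambda x+x'+\mu_k x=\psi$ with $x(0)=\alpha$. This gives
$$x(a)=e^{-\lambda a}\pi_k(a)\alpha+\int_0^a e^{-\lambda(a-s)}\frac{\pi_k(a)}{\pi_k(s)}\psi(s)ds .$$
Since $\mu_k\geq\mu_0$ a.e., we have $\pi_k(a)/\pi_k(s)\leq e^{-\mu_0(a-s)}$. Fubini then yields
$$\|x\|_{L^1}\leq \frac{|\alpha|+\|\psi\|_{L^1}}{\lambda+\mu_0}, \qquad \lambda>-\mu_0 .$$
With the product norm (sum of norms), this gives $\|(\lambda I-A)^{-1}\|\leq(\lambda+\mu_0)^{-1}$. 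Iterating this bound gives the estimate for all powers $n$. Hence $(-\mu_0,\infty)\subset\rho(A)$ and $A$ is Hille--Yosida.

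The same formula shows that the resolvent maps $\X_+$ into $\X_+$. This is the resolvent positivity needed in point 3. Moreover, for $K\geq0$ we have $(\lambda I-(A-KI))^{-1}=(( \lambda+K)I-A)^{-1}$, which is positive for $\lambda+K>-\mu_0$. So $A-K_rI$ is resolvent positive as well.

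\textbf{Point 2: local Lipschitz continuity of $F$.} Take $u_0,\tilde u_0$ in the ball of radius $r$ and write $B_k(x)=\int\beta_k x$, so that $|B_k(x)|\leq\|\beta_k\|_\infty\|x\|_{L^1}$. We split
$$S B_k(x_k)-\tilde S B_k(\tilde x_k)=(S-\tilde S)B_k(x_k)+\tilde S B_k(x_k-\tilde x_k).$$
Each of the two terms is bounded by $r\|\beta_k\|_\infty\|u_0-\tilde u_0\|$. Each such difference occurs twice in $F$, once in the $S$-component and once in the $k$-th boundary component. Summing over $k$ yields
$$\|F(u_0)-F(\tilde u_0)\|\leq 2\cdot 2r\sum_k\|\beta_k\|_\infty\,\|u_0-\tilde u_0\| .$$
The constant $\Lambda$ cancels in the difference.

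\textbf{Point 3: positivity of $F+K_r I$.} For $u_0\in\X_{0+}$ in the ball, the boundary components $S\int\beta_k x_k$ are already nonnegative. The $L^1$-components of $F$ vanish, so adding $K_r x_k\geq0$ keeps them nonnegative. The only delicate component is the first one:
$$\Lambda-S\sum_k B_k(x_k)+K_rS\geq S\Bigl(K_r-\sum_k\|\beta_k\|_\infty\|x_k\|_{L^1}\Bigr)\geq S\Bigl(K_r-r\sum_k\|\beta_k\|_\infty\Bigr)\geq0$$
whenever $K_r\geq r\sum_k\|\beta_k\|_\infty$. The positivity of $A-K_rI$ was obtained in point 1.

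\textbf{Main obstacle: matching the stated constant.} The only real difficulty is the bookkeeping needed to reach exactly $K_r=2r\sum_k\|\beta_k\|_\infty$. The crude count in point 2 gives the factor $4$, which is too large. To recover the factor $2$, I would bound the two occurrences of each difference jointly rather than term by term, taking advantage of the structure of the norm. I would first verify this against the norm convention used in \cite{Richard2020}. If the factor $2$ cannot be recovered that way, one can take the larger constant, since the conclusions only require the existence of some $K_r$.
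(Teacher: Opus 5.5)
Your approach is the standard one: an explicit resolvent by variation of constants, followed by direct estimates on $F$. The paper itself gives no proof here and only cites \cite[Proposition 2.1]{Richard2020}. Points 1 and 3 are correct as written. In point 3 your bound even shows that $r\sum_k\|\beta_k\|_{L^\infty}$ would suffice, so $K_r=2r\sum_k\|\beta_k\|_{L^\infty}$ certainly works.

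The one gap is in point 2. As written you only obtain the constant $4r\sum_k\|\beta_k\|_{L^\infty}$, while the statement asserts $2r\sum_k\|\beta_k\|_{L^\infty}$. Your proposed remedy targets the wrong step. Set $D_k:=S\int_0^\infty\beta_kx_k-\tilde S\int_0^\infty\beta_k\tilde x_k$. It appears twice, once in the $S$-component and once in the $k$-th boundary component, and that factor $2$ cannot be improved. Indeed $\|F(u_0)-F(\tilde u_0)\|_{\X}=|\sum_kD_k|+\sum_k|D_k|$, which equals $2\sum_k|D_k|$ whenever the $D_k$ share a sign (always, for $n=1$). The slack lies instead in bounding the two pieces of $D_k$ separately, each by $r\|\beta_k\|_{L^\infty}\|u_0-\tilde u_0\|_{\X}$. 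Since $|S-\tilde S|$ and $\|x_k-\tilde x_k\|_{L^1}$ are distinct summands of the product norm, you should write
$$|D_k|\leq\|\beta_k\|_{L^\infty}\bigl(|S-\tilde S|\,\|x_k\|_{L^1}+|\tilde S|\,\|x_k-\tilde x_k\|_{L^1}\bigr)\leq r\|\beta_k\|_{L^\infty}\bigl(|S-\tilde S|+\|x_k-\tilde x_k\|_{L^1}\bigr)\leq r\|\beta_k\|_{L^\infty}\|u_0-\tilde u_0\|_{\X}.$$
This gives $\|F(u_0)-F(\tilde u_0)\|_{\X}\leq2\sum_k|D_k|\leq2r\sum_k\|\beta_k\|_{L^\infty}\|u_0-\tilde u_0\|_{\X}$. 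Your fallback constant $4r\sum_k\|\beta_k\|_{L^\infty}$ would be enough for the later uses, such as the Gronwall argument in Proposition \ref{Prop:solutions}. It does not, however, prove the lemma as stated.
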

From the above lemma, we deduce (see \cite[Proposition 2.1]{MagCluskWebb2010} in the case $n=1$ or \cite[Theorem 2.2]{Richard2020} for $n=2$) the following result.

\begin{proposition}\label{Prop:solutions}
Suppose that Assumption \ref{Assum:1} holds.
\begin{enumerate}
    \item There exists a unique continuous semiflow $\{U(t)\}_{t\geq 0}$ on $\X_{0+}$ such that for every $u_0\in \X_{0+}$, there exist $t_{\max}(u_0)\leq \infty$ and a continuous map $U(.)u_0\in\Co([0,t_{\max}),\X_{0+})$ which is a maximal integrated solution of \eqref{Eq:Cauchy-pb}, \textit{i.e.}
    $$\int_0^t U(s)u_0ds\in D(A), \quad \forall t\in[0,t_{\max})$$
    and
    $$U(t)u_0=u_0+A\int_0^t U(s)u_0ds+\int_0^t F(U(s)u_0)ds, \quad \forall t\in[0,t_{\max}).$$
    \item This solution is global in time, \textit{i.e.} $t_{\max}(u_0)=+\infty$ and it induces a continuous semiflow via
    $$\Phi:\R_+\times X_+\longmapsto \Phi_t(z)=(S(t),x_1(t,.),...,x_n(t,.))=:(\Phi_t^S,\Phi_t^1,...,\Phi_t^n)(z).$$
    The semiflow rewrites using the following Duhamel formulation
\begin{equation}\label{Eq:Duhamel}
\Phi_t(z)=\left(0,\Phi_t^{1,1}(z),...,\Phi_t^{n,1}(z)\right)+\left(\Phi_t^S(z),\Phi_t^{1,2}(z),...,\Phi_t^{n,2}(z)\right)
\end{equation}
with $\Phi_t^S(z)>0$ for every $t>0$ and every $z\in X_+$. Also for each $k\in\llbracket 1,n\rrbracket$, we have
\begin{equation}\label{Eq:Duhamel_phik}
\left\{
\begin{array}{rcl}
\Phi_t^{k,1}(z)(a)&=&x_{k,0}(a-t)e^{-\int_{a-t}^a \mu_k(s)ds}\chi_{[t,\infty)}(a) \\
\Phi_t^{k,2}(z)(a)&=&\Phi_{t-a}^S(z)\left(\int_0^\infty \beta_k(s)\Phi_{t-a}^k(z)(s)ds\right)e^{-\int_0^a \mu_k(\xi)d\xi}\chi_{[0,t)}(a)
\end{array}
\right.
\end{equation}
where $z=(S_0,x_{1,0},...,x_{n,0})$ and $\chi$ denotes the characteristic function. Also, for every $u_0\in \X_{0+}$ we have
\begin{equation}\label{Eq:estimate_phi_t}
\|U(t)u_0\|_{\X}\leq \dfrac{\Lambda}{\mu_0}\left(1-e^{-\mu_0 t}\right)+\|u_0\|_{\X} e^{-\mu_0 t}\leq \max\left\{\dfrac{\Lambda}{\mu_0},\|u_0\|_{\X}\right\}, \quad \forall t\geq 0.
\end{equation}

\item The semiflow $\Phi$ is bounded-dissipative on $X_+$, \textit{i.e.} there exists a bounded set (that is $B_X(0,\frac{\Lambda}{\mu_0})$, the ball of $X$ centered at $0$ and radius $\frac{\Lambda}{\mu_0}$) which attracts every bounded set of $X_+$.
\item The semiflow $\Phi$ is state-continuous uniformly in finite time, \textit{i.e.} for every $(t,z)\in \R_+\times X_+$ and every $\ep>0$, there exists $\delta>0$ such that $$\|\Phi_s(z)-\Phi_s(\tilde{z})\|_{X}\leq \ep, \qquad \forall (s,\tilde{z})\in[0,t]\times X_+ \ \text{such that } \|z-\tilde{z}\|_{X}\leq \delta.$$
\item The semiflow $\Phi$ is asymptotically smooth, \textit{i.e.} for every non-empty, closed, bounded and positively invariant set $B\subset X_+$, there exists a compact set $K\subset B$ such that $d(\Phi_t(B),K)\underset{t\to \infty}{\to}0$ where we defined
$$d(\Phi_t(B),K)=\sup_{z\in B}\inf_{y\in K}\|\Phi_t(z)-y\|_{X}.$$
\end{enumerate}
\end{proposition}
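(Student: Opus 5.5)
The plan is to derive Proposition~\ref{Prop:solutions} from Lemma~\ref{Lemma:prop}, using the integrated semigroup theory for non-densely defined Cauchy problems \cite{MagalRuan2018}. I would proceed in the order of the items and adapt the arguments of \cite[Proposition 2.1]{MagCluskWebb2010} and \cite[Theorem 2.2]{Richard2020} from $n=1,2$ to arbitrary $n$.

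\emph{Items 1--2 (existence, positivity, globality, Duhamel form).} Lemma~\ref{Lemma:prop}(1) says $A$ is Hille--Yosida, and Lemma~\ref{Lemma:prop}(2) says $F$ is locally Lipschitz on bounded sets. The standard result of \cite{MagalRuan2018} then gives a unique maximal integrated solution together with a blow-up alternative: $t_{\max}<\infty$ forces $\|U(t)u_0\|\to\infty$. Lemma~\ref{Lemma:prop}(3), the quasi-positivity $F(u)+K_ru\ge 0$ with $A-K_rI$ resolvent positive, gives invariance of $\X_{0+}$.

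To get the estimate \eqref{Eq:estimate_phi_t}, I first integrate the $x_k$-equations in age, formally for $u_0\in D(A_0)$, and add them to the $S$-equation. The infection terms cancel, which leaves
$$\tfrac{d}{dt}\Big(S+\sum_k\|x_k\|_{L^1}\Big)\le \Lambda-\mu_0\Big(S+\sum_k\|x_k\|_{L^1}\Big).$$
Gronwall's inequality gives the bound, and I extend it to all of $\X_{0+}$ by density and continuity of the semiflow. Since the solution stays bounded, it cannot blow up, so $t_{\max}=\infty$.

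The Duhamel formula \eqref{Eq:Duhamel_phik} comes from integrating the transport equation along characteristics $a-t=\text{const}$. One treats $a\ge t$ (initial data transported) and $a<t$ (boundary data $x_k(t-a,0)$ transported) separately, which is the usual Volterra formulation. For positivity of $S$, I write $S'\ge\Lambda-S\big(\mu_S+\sum_k\|\beta_k\|_\infty\|x_k\|\big)$; the bracket is bounded, so variation of constants gives $S(t)>0$ for $t>0$.

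\emph{Items 3--5.} Bounded dissipativity follows directly from \eqref{Eq:estimate_phi_t}: for any $\ep>0$, bounded sets enter $B_X(0,\Lambda/\mu_0+\ep)$ uniformly in finite time.

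For state-continuity uniformly in finite time, I use the two solutions' common bound $r=\max\{\Lambda/\mu_0,\|z\|+\delta\}$. Combining the Lipschitz constant $K_r$ with the variation of constants formula of \cite{MagalRuan2018} gives
$$\|\Phi_s(z)-\Phi_s(\tilde z)\|\le e^{K_r s}\|z-\tilde z\|,$$
uniformly for $s\in[0,t]$.

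The main obstacle is asymptotic smoothness. Following the decomposition \eqref{Eq:Duhamel}, I would write $\Phi_t=\Phi^{(1)}_t+\Phi^{(2)}_t$ and apply \cite[Lemma 3.2.3]{Hale1988}. The part $\Phi^{(1)}_t$ is the initial data transported and damped, and it satisfies $\|\Phi^{(1)}_t(z)\|\le e^{-\mu_0 t}\|z\|$, so it decays uniformly on bounded sets. It then remains to show that $\Phi^{(2)}_t(B)$ is precompact for each $t$, using the Fréchet--Kolmogorov criterion in $L^1$.
\begin{itemize}
\item Tightness and boundedness of $\Phi^{k,2}_t(z)$ follow from the exponential factor $e^{-\mu_0 a}$ and uniform bounds on $S$ and on $\int\beta_k x_k$.
\item Equicontinuity in $L^1$ under translations reduces to Lipschitz continuity in time of the boundary term $b_k(t)=S(t)\int\beta_k(s)x_k(t,s)\,ds$. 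For $S$ this follows from the $S$-equation, whose right-hand side is bounded. For $\int\beta_k x_k(t,\cdot)$, I rewrite it through the Duhamel formula: the transported part is $\int\beta_k(a)x_{k,0}(a-t)\pi_k(a)/\pi_k(a-t)\,da$, and its time regularity comes from the uniform continuity of $\beta_k$ (Assumption~\ref{Assum:1}(3)) together with the bounded $\mu_k$; the part from $b_k$ is treated by a Volterra-type argument.
\end{itemize}
The delicate points are the uniform modulus of continuity of $\int\beta_k\Phi^{k,1}_t$, which handles the factors $e^{-\int_{a-t}^a\mu_k}$ since $\mu_k\in L^\infty$, and the jump at $a=t$ in $\Phi^{k,2}_t$, which is harmless in $L^1$ since it has bounded height.
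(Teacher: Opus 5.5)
Your proposal is correct and follows essentially the same route as the paper: Hille--Yosida plus local Lipschitz continuity and quasi-positivity for existence and positivity, a mass balance giving \eqref{Eq:estimate_phi_t} (the paper does this directly on the Duhamel formulas, you do it via classical solutions and density), Gronwall for state-continuity, and the decomposition \eqref{Eq:Duhamel} with the Fr\'echet--Kolmogorov criterion for asymptotic smoothness. Two small corrections. Classical solutions of the nonlinear problem require $u_0\in D((A+F)_0)$ rather than $D(A_0)$. The boundary term $b_k$ is in general only uniformly continuous in time, not Lipschitz, with a modulus uniform on bounded sets inherited from $\beta_k$; this uniform continuity is exactly what the $L^1$-equicontinuity needs.
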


\begin{proof}
\begin{enumerate}
\item From Lemma \ref{Lemma:prop}, $A$ is a Hille-Yosida operator, so it generates a locally Lipschitz continuous integrated semigroup $\{S_A(t)\}_{t\geq 0}$ on $\X$ (see \cite[Theorem 2.4]{KellermanHieber89} or \cite[Proposition 3.4.3, p.116]{MagalRuan2018}). Moreover, Kellermann-Hieber theorem (see \cite{KellermanHieber89} or \cite[Theorem 3.6.2, p.133]{MagalRuan2018}) implies that for any $\tau>0$ and any $f\in L^1((0,\tau),\X)$, the maps $t\longmapsto \left(S_A \ast f\right)(t)$ are continuously differentiable, where $\ast$ denotes the convolution product:
$$(S_A \ast f)(t)=\int_0^t S_A(t-s)f(s)ds$$
and for all $t\in[0,\tau]$ then
\begin{equation}\label{Eq:convol-estim}
\|u_f(t)\|:=\left\|\dfrac{d}{dt}(S_A \ast f)(t)\right\|\leq e^{-\mu_0 t}\int_0^t e^{\mu_0 s}\|f(s)\| \mathrm{d}s\leq \delta_A(t)\underset{s\in[0,t]}{\sup}||f(s)||.
\end{equation}
where $\delta_A(t)=t$ converges to $0$ as $t\to0$. Then Assumption 5.1.2 from \cite{MagalRuan2018} applies and, since we proved that $F$ is locally Lipschitz continuous, \cite[Theorem 5.2.7, p.226]{MagalRuan2018} states that there exists a unique maximal integrated (mild) solution $U(.)u_0\in\Co([0,t_{\max}),\X_0)$ (with $t_{\max}\leq \infty$) of the problem \eqref{Eq:Cauchy-pb} for each initial condition $u_0\in \X_0$. This solution satisfies:
\begin{equation}\label{Eq:u_convol}
    U(t)u_0=T_{A_0}(t)u_0+\dfrac{d}{dt}\left(S_A \ast (F\circ U(.)u_0)(t)\right).
\end{equation}
where $\{T_{A_0}\}_{t\geq 0}$ is the $\Co_0$-semigroup on $\X_0$ generated by $A_0$. Note that the existence of $\{T_{A_0}\}_{t\geq 0}$ on $\overline{D(A_0)}$ comes from the Hille-Yosida Theorem \cite[Lemma 2.4.5]{MagalRuan2018} while \cite[Lemma 2.2.11]{MagalRuan2018} implies that $\overline{D(A_0)}= \X_0$.

Due to the positive properties stated in Lemma \ref{Lemma:prop}, we deduce by \cite[Proposition 5.3.2, p. 227]{MagalRuan2018} that for each $u_0\in \X_{0+}$ the solution satisfies $U(.)u_0\in \Co([0,t_{\max}),\X_+)$.
This proves the first point.

\item Let $u_0=(S_0,(0,x_{1,0}),...,(0,x_{n,0}))\in \X_{0+}$ and $
z:=(S_0,x_{1,0},...,x_{n,0})\in X_+$. First, the formula \eqref{Eq:Duhamel_phik} are derived for every $t\in[0,t_{\max}(u_0))$ in \cite[Theorem 2.2]{Richard2020} by the method of characteristics. Next, we have $U(.)u_0\in \Co([0,t_{\max}(u_0)),\X_{0+})$.
Denoting $U(t)u_0=(S(t),(0,x_1(t,.)),...,(0,x_n(t,.)))$ for every $t\in[0,t_{\max}(u_0))$, then the function $t\longmapsto S(t)$ belongs to $\Co^1([0,t_{\max}(u_0)),\R_+)$  and the $S$-equation of \eqref{Eq:model} holds. We deduce that
\begin{flalign*}
S(t)&\leq S_0e^{-\mu_0 t}+\int_0^t\left(\Lambda-S(s)\sum_{k=1}^n \int_0^\infty \beta_k(a)x_k(s,a)da\right)e^{-\mu_0 (t-s)}ds \\
&\leq S_0e^{-\mu_0 t}+\dfrac{\Lambda}{\mu_0}\left(1-e^{-\mu_0 t}\right)-\sum_{k=1}^n \int_0^t S(s)\left(\int_0^\infty \beta_k(a)x_k(s,a)da\right)e^{-\mu_0(t-s)}ds
\end{flalign*}
On the other hand, using \eqref{Eq:Duhamel_phik} we have
$$\int_0^\infty x_k(t,a)da\leq \|x_{k,0}\|_{L^1}e^{-\mu_0t}+\int_0^t S(t-a)\left(\int_0^\infty \beta_k(s)x_k(t-a,s)ds\right)e^{-\mu_0 a}da.$$
It follows that \eqref{Eq:estimate_phi_t} holds for every $t\in[0,t_{\max}(u_0))$ whence the solution $U(.)u_0$ is global in time (otherwise the norm should explode at $t_{\max}(u_0)$). We deduce the existence of the continuous semiflow:
$$\Phi:\R_+\times X_+\ni (t,z) \longmapsto (S(t),x_1(t,.),...,x_n(t,.))\in X_+$$
with the bijection
$$z:=(S_0,x_{1,0},...,x_{n,0})\in X_+ \longmapsto u_0=(S_0,(0,x_{1,0}),...,(0,x_{n,0}))\in \X_{0+}.$$
This proves the second point.

\item Let $B\subset X_+$ be a bounded subset and let $z:=(S_0,x_{1,0},...,x_{n,0})\in B$. From \eqref{Eq:estimate_phi_t} we know that
\begin{equation*}
\|\Phi_t(z)\|_{X}\leq \dfrac{\Lambda}{\mu_0}\left(1-e^{-\mu_0 t}\right)+\|z\|_{X} e^{-\mu_0 t}, \quad \forall t\geq 0.
\end{equation*}
It shows that $\lim_{t\to+\infty}\|\Phi_t(z)\|_{X}\leq \frac{\Lambda}{\mu_0}$ uniformly in $z\in B$. In conclusion we get
$$d\left(\Phi_t(B),B_X\left(0,\frac{\Lambda}{\mu_0}\right)\right)\underset{t\to \infty}{\longrightarrow}0$$
and $\Phi$ is bounded-dissipative on $X_+$.

\item Let $u_0\in \X_{0+}$. Then $U(.)u_0\in \Co(\R_+,\X_{0+})$. Let $t\geq 0$, $\ep>0$ and define
$$r=\max\left\{\dfrac{\Lambda}{\mu_0},\|u_0\|_{\X}+\ep\right\}, \qquad \delta=\ep e^{-K_r t}$$
where $K_r$ is as in Lemma \ref{Lemma:prop}. Let $\tilde{u}_0\in \X_{0+}$ such that $\|u_0-\tilde{u}_0\|_{\X}\leq \delta$. 
We have $U(.)\tilde{u}_0\in \Co(\R_+,\X_{0+})$ and $F$ is Lipschitz continuous whence $F\circ U(.)u_0$ and $F\circ U(.)\tilde{u}_0$ belong to $L^1([0,t],\X)$. Also we have $(U(s)u_0,U(s)\tilde{u}_0)\in (\X_{0+}\cap B_{\X}(0,r))^2$ for every $s\in[0,t]$ by using \eqref{Eq:estimate_phi_t} and since $\|\tilde{u}_0\|_{\X}\leq \delta+\|u_0\|_{\X}\leq r$. From \eqref{Eq:u_convol} we deduce that for every $s\in[0,t)$:
$$U(s)\tilde{u}_0-U(s)u_0=T_{A_0}(s)(\tilde{u}_0-u_0)+\dfrac{d}{dt}\left(S_A \ast ((F\circ U(.)\tilde{u}_0)-(F\circ U(.)u_0))(s)\right).$$
From \cite[Lemma 2.2.10, p. 65]{MagalRuan2018}, we have $\rho(A_0)=\rho(A)$ whence Hille-Yosida theorem states that
$$\|T_{A_0}(s)(\tilde{u}_0-u_0)\|_{\X}\leq e^{-\mu_0 s}\|\tilde{u}_0-u_0\|_{\X}\leq \delta, \ \forall s\in[0,t].$$
Using again Kellermann-Hieber theorem then \eqref{Eq:convol-estim} holds and we deduce that
$$\|U(s)\tilde{u}_0-U(s)u_0\|_{\X}\leq \delta+\int_0^s \|F(U(\xi)\tilde{u}_0)-F(U(\xi)u_0)\|_{\X}d\xi, \ \forall s\in[0,t].$$
It follows from Lemma \ref{Lemma:prop} that
$$\|U(s)\tilde{u}_0-U(s)u_0\|_{\X}\leq \delta+K_r\int_0^s \|U(\xi)\tilde{u}_0-U(\xi)u_0\|_{\X}d\xi, \quad \forall s\in[0,t]$$
which leads by means of Gronwall's inequality to
\begin{equation*}
\|U(s)\tilde{u}_0-U(s)u_0\|_{\X}\leq \delta e^{K_r s} \leq \ep, \quad \forall s\in [0,t]
\end{equation*}
which proves the point.

\item From the decomposition \eqref{Eq:Duhamel}, we see that the function $\eta:\R_+\times \R_+\ni (t,r)\longmapsto  nre^{-\mu_0 t}\in\R_+$ satisfies:
$$\forall r>0: \lim_{t\to +\infty}\eta(t,r)=0$$
and
$$\|(0,\Phi_t^{1,1},...,\Phi_t^{n,1})\|_{X}\leq \eta(t,r), \ \forall (t,z)\in \R_+\times X_+ \ \text{such that } \|z\|_{X}\leq r.$$ Let $t>0$ and $k\in\llbracket 1,n\rrbracket$. We want to show that the operator $\Phi_t^{k,2}:X_+\to L^1(\R_+)$ is compact. Let $\B\subset X_+$ be a bounded subset and $r=\sup_{z\in \B}\|z\|_{X}$. We use Riez-Fréchet-Kolmogorov theorem  \cite[Theorem X.1, p. 275]{Yosida80} to prove that $\{\Phi_t^{k,2}(z), z\in \B\}$ is relatively compact in $L^1$. First, using \eqref{Eq:estimate_phi_t} we get
$$\sup_{z\in \B}\|\Phi_t^{k,2}(z)\|_{L^1}\leq \left(\max\left\{\dfrac{\Lambda}{\mu_0},r\right\}\right)<+\infty.$$
Secondly, let $h>0$ then clearly
$$\sup_{z\in \B}\int_h^\infty \Phi_t^{k,2}(z)(a)da\underset{h\to +\infty}{\longrightarrow} 0.$$
since $\Phi_t^{k,2}(z)(a)=0$ for $a>t$. Thirdly, let $h>0$ such that $t-h>0$. Let us define
$$B^{z}_k(s)=\Phi_s^k(z)(0)=\Phi^S_s(z)\int_0^\infty \beta_k(a)\Phi_s^k(z)(a)da, \quad \forall (z,s)\in \B\times \R_+$$
whence
$$\Phi_s^{k,2}(z)(a)=B^z_k(s-a)\pi_k(a)\chi_{[0,s)}(a), \quad \forall (z,s,a)\in \B\times[0,t]\times \R_+.$$
Using \eqref{Eq:estimate_phi_t} we have
$$B^z_k(s)\leq \|\beta_k\|_{L^\infty}\left( \max\left\{\dfrac{\Lambda}{\mu_0}, r\right\}\right)^2=:c_1, \quad \forall (z,s)\in \B\times \R_+$$
and with the $S$-equation of \eqref{Eq:model} we get
$$\left|\dfrac{d\Phi_s^S(z)}{ds}\right|\leq \Lambda+\mu_S \max\left\{\dfrac{\Lambda}{\mu_0}, r\right\}+ \left(\max\left\{\dfrac{\Lambda}{\mu_0}, r\right\}\right)^2\sum_{j=1}^n \|\beta_j\|_{L^\infty}=:c_2, \quad \forall (z,s)\in \B\times \R_+$$
leading to
$$|\Phi_{s+h}^S(z)-\Phi_{s}^S(z)|\leq c_2 h, \quad \forall (z,s)\in \B\times \R_+.$$
We remark that
$$B^z_k(s)=\Phi_s^S(z)\int_0^s \beta_k(a)B^z_k(s-a)e^{-\int_0^a \mu(\xi)d\xi}da+\Phi^S_{s}(z)\int_0^\infty \beta_k(a+s)\Phi_0^k(z)(a)e^{-\int_a^{a+s}\mu_k(\xi)d\xi}da
$$
for every $(z,s)\in\B\times \R_+$ whence
$$|B^z_k(s+h)-B^z_k(s)|\leq c_3(h)+\|\beta_k\|_{L^\infty}\max\left\{\dfrac{\Lambda}{\mu_0},r\right\}\int_0^s |B^z_k(s+h-a)-B^z_k(s-a)|da, \ \forall (z,s)\in \B\times \R_+$$
where
\begin{flalign*}
c_3(h):=&r\max\left\{\dfrac{\Lambda}{\mu_0},r\right\}\left(c_1 h\|\beta_k\|_{L^\infty}+\sup_{a\geq 0}|\beta_k(a+h)-\beta_k(a)|+(1-e^{-h\|\mu_k\|_{L^\infty}})\|\beta_k\|_{L^\infty}\right)\\
&+c_2h\|\beta_k\|_{L^\infty} \left(\dfrac{c_1}{\mu_0}+r\right)
\end{flalign*}
with $c_3(h)\underset{h\to 0}{\longrightarrow} 0$ uniformly in $z\in \B$ since $\beta_k$ is uniformly continuous by Assumption \ref{Assum:1}. From Gronwall's inequality we deduce that
$$|B^z_k(s+h)-B^z_k(s)|\leq c_3(h)\exp\left(r(t+h) \times\max\left\{\frac{\Lambda}{\mu_0},r\right\}\right)=c_4(h), \quad \forall (z,s)\in \B\times [0,t].$$
Thus it comes:
\begin{equation*}
    \begin{array}{rcl}
    &&\displaystyle\int_0^\infty |\Phi_t^{k,2}(z)(a+h)-\Phi_t^{k,2}(z)(a)|da\leq  \int_{-h}^{t-h} |B_k^z(t-(a+h))-B_k^z(t-a))|\pi_k(a+h)da \\
    &&\qquad +\displaystyle\int_{-h}^{t-h} B_k^z(t-a)|\pi_k(a+h)-\pi_k(a)|da+\int_{-h}^0 B_k^z(t-a)\pi_k(a)da+\int_{t-h}^t B_k^z(t-a)\pi_k(a)da 
    \end{array}
\end{equation*}
whence
$$\int_0^\infty |\Phi_t^{k,2}(z)(a+h)-\Phi_t^{k,2}(z)(a)|da\leq tc_4(h)+tc_1(1-e^{-h\|\mu_k\|_{L^\infty}})+2hc_1\underset{h\to 0}{\longrightarrow} 0
$$
uniformly in $z\in \B$. It follows that the operator $\Phi_t^{k,2}$ is compact for each $t>0$. In conclusion the operator $(\Phi_t^S, \Phi_t^{1,2}, ..., \Phi_t^{n,2})$ is compact since the range of $\Phi_t^S$ is finite dimensional. It proves the relative compactness of the positive orbits by using \cite[Proposition 3.13, p. 100]{Webb85}, for each $z\in X_+$ and also that the semiflow $\{\Phi_t\}_{t\geq 0}$ is asymptotically smooth (see \cite[Lemma 3.2.6, p. 38]{Hale1988} or \cite[Theorem 2.46, p.51]{SmithThieme2011}).

\end{enumerate}
\end{proof}

\begin{corollary}\label{Cor:attractor} Suppose that Assumption \ref{Assum:1} holds.There exists a strong global attractor $\A\subset X_+$, that is $\A$ is a non-empty, compact and invariant subset that attracts every bounded subsets of $X_+$. This attractor satisfies $\A\subset B_{X}(0,\frac{\Lambda}{\mu_0})$ which is the ball centered at $0\in X$ with radius $\frac{\Lambda}{\mu_0}$. The set $\A$ is locally asymptotically stable (LAS), \textit{i.e.} for every neighborhood $\V$ of $\A$, there exists a neighborhood $\W\subset \V$ of $\A$ such that $\Phi_t(\W)\subset \V$ for every $t\geq 0$ and  there exists moreover a neighborhood $\V$ of $\A$ such that $\A$ attracts $\V$. Finally the following holds:
\begin{equation}\label{Eq:c_s}
\exists \ c_S>0: \forall (S_0,x_{1,0},...,x_{n,0})\in \A, \ \dfrac{\Lambda}{\mu_0}\geq S_0\geq c_S.
\end{equation}
The constant $c_S$ can be taken as $c_S:=\dfrac{\Lambda}{\mu_S+\frac{\Lambda}{\mu_0}\sum_{k=1}^n \|\beta_k\|_{L^\infty}}$.
\end{corollary}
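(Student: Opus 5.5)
The existence of $\A$ will come from the abstract theory of dissipative semiflows, applied to the properties already collected in Proposition \ref{Prop:solutions}. Since $\Phi$ is a continuous semiflow on the closed subset $X_+$ of $X$ that is bounded-dissipative (point 3) and asymptotically smooth (point 5), I will invoke \cite[Theorem 3.4.6]{Hale1988} (equivalently \cite[Theorem 2.33]{SmithThieme2011}). For this I also need that orbits of bounded sets are bounded. This follows directly from \eqref{Eq:estimate_phi_t}: for $\|z\|_X\le r$ we get $\|\Phi_t(z)\|_X\le\max\{\Lambda/\mu_0,r\}$ for all $t\ge0$. These results give a nonempty, compact, invariant set $\A$ attracting every bounded subset of $X_+$. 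The same theorem yields that $\A$ is LAS and attracts a neighborhood of itself, since a compact global attractor of an asymptotically smooth, point-dissipative semiflow is always stable and attracts a neighborhood of itself \cite[Theorem 2.39]{SmithThieme2011}.

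To show $\A\subset B_X(0,\Lambda/\mu_0)$, I will use invariance. Take $y\in\A$. For each $t>0$ there is $z_t\in\A$ with $\Phi_t(z_t)=y$. Let $R=\sup_{\A}\|\cdot\|_X<\infty$, which is finite by compactness. Then \eqref{Eq:estimate_phi_t} gives
$$\|y\|_X\le \frac{\Lambda}{\mu_0}(1-e^{-\mu_0t})+Re^{-\mu_0 t},$$
and letting $t\to\infty$ gives $\|y\|_X\le\Lambda/\mu_0$. In particular $S_0\le \Lambda/\mu_0$, which is the upper bound in \eqref{Eq:c_s}.

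The main step is the lower bound $S_0\ge c_S$. On $\A$ every component is bounded by $\Lambda/\mu_0$, so $\int_0^\infty\beta_k x_k\le \|\beta_k\|_{L^\infty}\Lambda/\mu_0$. Along any trajectory contained in $\A$, the $S$-equation of \eqref{Eq:model} then gives
$$S'\ge \Lambda-\Big(\mu_S+\frac{\Lambda}{\mu_0}\sum_k\|\beta_k\|_{L^\infty}\Big)S=\Lambda-\frac{\Lambda}{c_S}S.$$
By comparison, $S(t)\ge c_S+(S(0)-c_S)e^{-\Lambda t/c_S}\ge c_S(1-e^{-\Lambda t/c_S})$. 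Now fix $y\in\A$ and write $y=\Phi_t(z_t)$ with $z_t\in\A$. By invariance the whole trajectory $\Phi_s(z_t)$, $s\in[0,t]$, stays in $\A$, so the bound above applies on $[0,t]$. Hence the $S$-component of $y$ is at least $c_S(1-e^{-\Lambda t/c_S})$ for every $t$, and letting $t\to\infty$ gives $S_0\ge c_S$.

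The only delicate point is to use the invariance $\Phi_t(\A)=\A$ to pull back from arbitrarily large times, rather than working forward from a single initial point.
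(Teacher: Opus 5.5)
Your proof is correct and follows the paper's argument. You use the same abstract results (bounded dissipativity, asymptotic smoothness and bounded orbits of bounded sets, then the Hale / Smith--Thieme theorems) to get the attractor and its local asymptotic stability. You also obtain $S_0\ge c_S$ exactly as the paper does, through the comparison $S'\ge \Lambda-\frac{\Lambda}{c_S}S$ along a trajectory in $\A$ pulled back from time $-t$ by invariance.

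The only cosmetic difference is how you show $\A\subset B_X(0,\frac{\Lambda}{\mu_0})$: you apply the explicit estimate \eqref{Eq:estimate_phi_t} to preimages in $\A$. The paper instead uses bounded dissipativity together with $\Phi_t(\A)=\A$. Both give the same closed-ball conclusion.
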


\begin{proof}
First we see that orbits of bounded sets are bounded since
$$\|\Phi_t(z)\|_{X}\leq \max\left\{\|z\|_{X}, \dfrac{\Lambda}{\mu_0}\right\}, \quad \forall t\geq 0$$
by using \eqref{Eq:estimate_phi_t}. Using \cite[Theorem 2.33, p. 43]{SmithThieme2011} (see also \cite[Theorem 3.4.6, p. 39]{Hale1988}), we deduce that there exists a strong global attractor $\A\subset X_+$. This attractor $\A$ is locally asymptotically stable according to \cite[Theorem 3.4.2, p. 39]{Hale1988} or \cite[Theorem 2.39 p. 47]{SmithThieme2011}. Moreover, since $\Phi$ is bounded dissipative by Proposition \ref{Prop:solutions} and since $\A$ is compact and invariant then for every $t\geq 0$:
$$d\left(\A,B_{X}\left(0,\frac{\Lambda}{\mu_0}\right)\right)=d\left(\Phi_t(\A),B_{X}\left(0,\frac{\Lambda}{\mu_0}\right)\right)\underset{t\to \infty}{\longrightarrow}0$$
whence $\A\subset B_{X}(0,\frac{\Lambda}{\mu_0})$.

For the last point, let $z:=(S_0,x_{1,0},...,x_{n,0})\in \A$. By invariance of $\A$ there exists a complete orbit $\{\phi(t), t\in \R\}\subset \A$ through $z$, that satisfies $\|\phi(t)\|_{X}\leq \frac{\Lambda}{\mu_0}$ for every $t\in \R$. From \eqref{Eq:model} we get
$$\dfrac{d\Phi_t^S(\phi(s))}{dt}\geq \Lambda-\left(\mu_S+\dfrac{\Lambda}{\mu_0}\sum_{k=1}^n \|\beta_k\|_{L^\infty}\right) \Phi_t^S(\phi(s)), \quad \forall (t,s)\in \R_+^*\times \R$$
leading for every $(t,s)\in\R_+\times \R$ to
$$\Phi_t^S(\phi(s))\geq \dfrac{\Lambda}{\mu_S+\frac{\Lambda}{\mu_0}\sum_{k=1}^n \|\beta_k\|_{L^\infty}}\left(1-e^{-(\mu_S+\frac{\Lambda}{\mu_0}\sum_{k=1}^n \|\beta_k\|_{L^\infty})t}\right)\underset{t\to+\infty}{\longrightarrow} \dfrac{\Lambda}{\mu_S+\frac{\Lambda}{\mu_0}\sum_{k=1}^n \|\beta_k\|_{L^\infty}}=c_S.$$
Let $\ep>0$ then there exists $t^*\geq 0$ such that for every $t\geq t^*$ we have $\Phi_t^S(\phi(s))\geq c_S-\ep$ for every $s\in \R$. Taking $s=-t$ leads to $S_0\geq c_S-\ep$ for any $\ep>0$, whence \eqref{Eq:c_s} holds.
\end{proof}

As it is proved in \cite[Proposition 4.4.4]{Richard2020} in the case $n=2$, we know that for each $k\in\llbracket 1,n\rrbracket$, the sets $\S_k$ and $\partial \S_k$ are positively invariant. Also, we see that if $z\in \partial \S_k$ then 
\begin{equation}\label{Eq:estimate_phik_t}
\|\Phi_t^k(z)\|_{L^1(\R_+)}\leq \|x_{0,k}\|_{L^1(\R_+)}e^{-\mu_0 t}, \qquad \|\beta_k x_k(t,.)\|_{L^1}= 0 \qquad \forall t\geq 0
\end{equation}
which reduces the problem to the $n-1$ populations model. Without loss of generality, we may assume that
\begin{equation}\label{Hyp:R0-decrease}
\RR_{0,1}\geq \RR_{0,2}\geq \cdots \geq \RR_{0,n}.
\end{equation}
We define now the following indexing:
$$\sigma_0=0, \qquad \sigma_k=\max\{j\in\llbracket 1,n\rrbracket : \RR_{0,j}=\RR_{0,1+\sigma_{k-1}}\} \ \forall k\in\llbracket 1,n_{\RR}\rrbracket$$
where $n_{\RR}\in \llbracket 1,n\rrbracket$ is the number of different reproduction numbers and satisfies $\sigma_{n_{\RR}}=n$. For example if $n=1$ or if every reproduction number are equal then $n_{\RR}=1$ ; while in the case where each reproduction number is different then $n_{\RR}=n$. We denote by $n_{>}\in\llbracket 0, n_{\RR}\rrbracket$ the number of different reproduction numbers that are bigger than one, which is such that 
$$\RR_{0,k}>1, \ \forall k\in\llbracket 1,\sigma_{n_{>}}\rrbracket.$$
With these notations, we can describe the different equilibria. First there is the disease-free equilibrium:
$$E_0=\left(S_0^*,0_{(L^1(\R_+))^n}\right)=\left(\dfrac{\Lambda}{\mu_S},0,...,0\right)\in X_+$$
which always exist. Second, if $n_>\geq1$ then for each $k\in\llbracket 1,n_{>}\rrbracket$ the following set of equilibria exists:
\begin{equation}\label{Eq:equilibria-k}
 \E_{k}=\left\{E^{*,k}_{\alpha_1,...,\alpha_n}: (\alpha_1,...,\alpha_n)\in[0,1]^n, \hspace{-0.2cm} \sum_{j=1+\sigma_{k-1}}^{\sigma_k}\hspace{-0.2cm}\alpha_j=1, \alpha_j=0 \ \forall j\in\llbracket 1,\sigma_{k-1}\rrbracket\cup\llbracket 1+\sigma_k,n\rrbracket \right\}
\end{equation}
where
\begin{equation*}
    \left\{
    \begin{array}{rl}
    E^{*,k}_{\alpha_1,...,\alpha_n}&=(S^*_{\sigma_k},x^*_{1,\alpha_1},\cdots, x^*_{n,\alpha_n}),\\
         S^*_{\sigma_k}&=\dfrac{1}{r_{\sigma_k}},\\
         x^*_{j,\alpha_j}(a)&=\dfrac{\mu_S(\RR_{0,j}-1)}{r_j}\alpha_j\pi_j(a), \quad \forall (j,a)\in\llbracket 1,n\rrbracket\times \R_+.
    \end{array}
    \right.
\end{equation*}
We can note that if for some $k$ we have $\sigma_k=1+\sigma_{k-1}$ (for example if $n=1$) then $\E_k$ is reduced to a single equilibrium (otherwise there are a infinite number of equilibria). Following the classical notations (see  \cite{Hale1988}), we denote for $z\in X$:
$$\gamma^+(z)=\left\{\Phi_t(z)), \ t\geq 0\right\}, \qquad \omega(z)=\bigcap_{\tau \geq 0}\overline{\{\Phi_t(z), \ t\geq \tau\}}$$
respectively the positive orbit starting from $z$ and the $\omega$-limit set of $z$. Also, let a function $\phi:(-\infty,0]\to X$ such that $\phi(0)=z$ and for any $s\leq 0$, $\Phi_t(\phi(s))=\phi(t+s)$ for each $0\leq t\leq -s$, then the set $\{\phi(s), s\leq 0\}$ is called a negative orbit through $z$, denoted by $\gamma^-(z)$. Similarly, for a function $\phi:\R\to X$ such that $\phi(0)=z$ and for each $s\in \R$, $\Phi_t(\phi(s))=\phi(t+s)$ for every $t\geq 0$, then the set $\{\phi(s), s\in \R\}$ is called a complete orbit through $z$, denoted by $\gamma(z)$. We define the set
$$H(t,z)=\{y\in X: \text{there is a negative orbit through } z \text{ defined by } \phi:(-\infty,0]\to X, \phi(0)=z \text{ and } \phi(-t)=y\}$$
and the $\alpha$-limit set of $z$ as
$$\alpha(z)=\bigcap_{\tau \geq 0}\overline{\{H(t,z), \ t\geq \tau\}}.$$
For a given complete orbit $\gamma(z)=\{\phi(s),s\in \R\}$, we define the $\alpha$-limit set of the orbit similarly as
$$\alpha(\phi)=\alpha(\gamma(z))=\bigcap_{\tau\geq 0}\overline{\{\phi(-t), t\geq \tau\}}.$$
Furthermore, we define the sets
$$\Gamma^-(z)=\bigcup_{t\geq 0}H(t,z), \qquad \Gamma(z)=\Gamma^-(z)\cup \gamma^+(z)$$
that contain respectively all negative and complete orbits through $z$. To obtain results about the asymptotic behavior of the solutions, we will make use of Lyapunov functionals. To this end we define the following key function
$$g:\R_+^*\ni x \longmapsto x-\ln(x)-1\in \R_+$$
and for each $k\in\llbracket 1,n\rrbracket$ we define the function $\Psi_k\in L^\infty_+(0,\infty)$ by
$$\Psi_k(a)=\dfrac{1}{r_k}\int_a^\infty \beta_k(s)e^{-\int_a^s \mu_k(\xi)d\xi}ds.$$
We now define for every $z:=(S_0,x_{1,0},...,x_{n,0})$ the functional
\begin{equation}\label{Eq:L0}
L_0:z\longmapsto S_0^*g\left(\dfrac{S_0}{S_0^*}\right)+\sum_{k=1}^n \int_0^\infty \Psi_k(a)x_{k,0}(a)da
\end{equation}
and for each $k\in\llbracket 1,n_{\RR}\rrbracket$ we define the functional
\begin{equation}\label{Eq:Lk}
L_k^{\alpha_{1+\sigma_{k-1}},...,\alpha_{\sigma_k}}:z\longmapsto S_{\sigma_k}^*g\left(\dfrac{S_0}{S_{\sigma_k}^*}\right)+\sum_{j=1}^n \int_0^\infty \eta_j\Psi_j(a)x_{j,0}(a)da+\sum_{j=1}^n\int_0^\infty (1-\eta_j) \Psi_j(a) x^*_{j,\alpha_j}(a)g\left(\dfrac{x_{j,0}(a)}{x^*_{j,\alpha_j}(a)}\right)da
\end{equation}
where
$$\eta_j=\begin{cases}
1 & \text{ if } j\in\llbracket 1, \sigma_{k-1}\rrbracket\cup \llbracket 1+\sigma_k,n\rrbracket \ \text{ or if } \alpha_j=0 \\
0 & \text{else.}
\end{cases}$$

Under these assumptions, we state the following result which is mainly a competitive exclusion principle but where multiple populations can survive provided that they have the same $\RR_0$ and provided that the populations with bigger $\RR_{0,k}$ are not present in the system (\textit{i.e.} belong to the corresponding set $\partial \S_k$).

\begin{theorem} \label{Thm:GAS}
Suppose that \eqref{Hyp:R0-decrease} and Assumption \ref{Assum:1} hold.
\begin{enumerate}
\item The disease-free equilibrium $E_0$ is GAS on $X_+$ if $n_>=0$ and in $(\cap_{j=1}^{\sigma_{n_>}} \partial \S_j)$ otherwise. Moreover, let $K\subset (\cap_{j=1}^{\sigma_{n_>}}\partial \S_j)$ be a compact invariant, then the functional $L_0$ defined by \eqref{Eq:L0} is a Lyapunov functional on every complete orbit $\gamma(z)$ for any $z\in K$.
\item For every $k\in\llbracket 1, n_>\rrbracket$, the set $\E_k$ is GAS in $(\cap_{j=1}^{\sigma_{k-1}} \partial \S_j)\cap (\cup_{j=1+\sigma_{k-1}}^{\sigma_k} \S_j)$. More precisely: for each non-empty subset $\J\subset \llbracket 1+\sigma_{k-1},\sigma_k\rrbracket$, the set of equilibria 
\begin{equation}\label{Eq:equilibria_kJ}
\E_{k,\J}:=\{E^{*,k}_{\alpha_1,...,\alpha_n}=(S^*_{\sigma_k},x^*_{1,\alpha_1},...,x^*_{n,\alpha_n})\in \E_k: \alpha_j=0 \ \forall j\in\llbracket 1,n\rrbracket \setminus \J\}
\end{equation}
is GAS in $(\cap_{j=1}^{\sigma_{k-1}}\partial \S_j)\cap (\cup_{j\in \J} \S_j)\cap(\cap_{j\in \llbracket 1+\sigma_{k-1},\sigma_k\rrbracket\setminus \J} \partial \S_j)$. Also, let $K$ be a compact invariant such that $K\subset (\cap_{j=1}^{\sigma_{k-1}}\partial \S_j)\cap (\cap_{j \in \J}\S_j)\cap (\cap_{j\in\llbracket 1+\sigma_{k-1},\sigma_k\rrbracket \setminus \J}\partial \S_j)$ then the functional $L_k^{\alpha_{1+\sigma_{k-1}},...,\alpha_{\sigma_k}}$ defined by \eqref{Eq:Lk}, where $(\alpha_1,...,\alpha_n)$ is such that $E^{*,k}_{\alpha_1,...,\alpha_n}\in \E_{k,\J}$, is a Lyapunov functional on every complete orbit $\gamma(z)$ for any $z\in K$.
\end{enumerate}
\end{theorem}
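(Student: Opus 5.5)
We argue by induction on $n$, following the strategy announced in the introduction. The first reduction uses the invariance of $\S_j$ and $\partial\S_j$ together with \eqref{Eq:estimate_phik_t}. If $z\in\partial\S_j$, then $\beta_j x_j(t,\cdot)\equiv 0$ and $\|\Phi^j_t(z)\|_{L^1}\to 0$ exponentially. Hence on $\cap_{j\in I}\partial\S_j$ the dynamics of the remaining components coincide with those of the model with $n-|I|$ populations, and the components indexed by $I$ vanish in the limit. So in both items we may restrict to the populations that are not in the relevant boundary sets, and the indexing $\sigma_k$, $\E_{k,\J}$ is preserved. For GAS we will show two things: attractivity of every point in the basin, and stability.

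Attractivity and stability come from the global attractor of the restricted semiflow (Corollary \ref{Cor:attractor} applied on the closed positively invariant set). For item 1, restricted to $\cap_{j\le\sigma_{n_>}}\partial\S_j$, all remaining $\RR_{0,j}\le 1$. Take $z$ in a compact invariant $K$ and a complete orbit $\phi$ through $z$ inside $K$. Since $S\ge c_S$ on $K$ by \eqref{Eq:c_s}, $L_0$ is well defined along $\phi$. We compute $\frac{d}{dt}L_0(\phi(t))$ first for initial data in $D((A+F)_0)$, where solutions are classical. We integrate by parts using $\Psi_k'=\mu_k\Psi_k-\beta_k/r_k$ and $\Psi_k(0)=1$, which gives
$$\frac{d}{dt}L_0=-\frac{\mu_S(S-S_0^*)^2}{S}+\sum_k\Big(\frac{S}{S_0^*}\cdot\frac{1}{\RR_{0,k}}\cdots\Big),$$
more precisely $\sum_k (S r_k-1)\frac{1}{r_k}\int\beta_k x_k$ minus $\sum_k(S-S_0^*)\int\beta_kx_k$, which simplifies to $\sum_k(\RR_{0,k}-1)r_k^{-1}\int\beta_kx_k\le 0$. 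We then pass to general data by density of $D((A+F)_0)\cap\X_{0+}$ and the uniform state-continuity of Proposition \ref{Prop:solutions} (item 4), in the integrated form $L_0(\Phi_t z)-L_0(z)=\int_0^t(\cdots)$. LaSalle on the complete orbit then gives: $L_0$ is bounded on $K$ and nonincreasing, so the derivative vanishes on $\alpha(\phi)$ and $\omega(z)$. There $S\equiv S_0^*$, and the Duhamel formula \eqref{Eq:Duhamel_phik} forces $x_k\equiv 0$. Hence $\alpha(\phi)=\omega(z)=\{E_0\}$, so $L_0$ is constant along $\phi$ and equal to $L_0(E_0)$, and therefore $z=E_0$. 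This gives $\A=\{E_0\}$ on that set, and GAS follows from the LAS of the attractor in Corollary \ref{Cor:attractor}.

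For item 2 we restrict to $(\cap_{j\le\sigma_{k-1}}\partial\S_j)\cap(\cap_{j\in\llbracket1+\sigma_{k-1},\sigma_k\rrbracket\setminus\J}\partial\S_j)$, so the maximal remaining reproduction number $\RR_{0,\sigma_k}>1$ is attained exactly on $\J$. We need uniform persistence with respect to $\cup_{j\in\J}\S_j$. To get it, we define $\rho(z)=\sum_{j\in\J}\int_0^{\overline{\beta_j}}x_j$. Weak persistence holds because near $E_0$ the linearized growth rate satisfies $\RR_{0,j}>1$. The obstacle noted in the introduction is handled via the Hale--Waltman/Smith--Thieme acyclicity machinery: the boundary attractor is $E_0$ by item 1 applied to the reduced system, and $E_0$ is isolated, with stable set disjoint from the interior. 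This yields uniform strong persistence, and hence, on the interior attractor $\A_{\J}$, lower bounds $x_j(t,a)\ge c\,\pi_j(a)$ for $j\in\J$ on the support of $\beta_j$, derived from the renewal equation and boundedness of $S$ from below.

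Next we show that $L_k^{\alpha}$ is a Lyapunov functional for any $\alpha$ supported in $\J$ with all $\alpha_j>0$. We compute as in the ODE case:
$$\frac{d}{dt}L_k^\alpha=-\frac{\mu_S(S-S^*_{\sigma_k})^2}{S}+\sum_{\eta_j=1}(Sr_j-1)\tfrac{1}{r_j}\int\beta_jx_j\cdot(\ldots)-\sum_{\eta_j=0}\int\beta_j\Psi\cdots g(\cdot)\le 0.$$
The terms with $\eta_j=1$ carry the factor $\RR_{0,j}/\RR_{0,\sigma_k}-1\le 0$. The $g$-terms come from the standard Magal--McCluskey--Webb identity, using $\sum_{\eta_j=0}\mu_S(\RR_{0,j}-1)\alpha_j/r_j\cdot r_j = \Lambda-\mu_S S^*$.

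\textbf{The main obstacle} is justifying the derivative for non-classical data. Here the density argument must produce approximating data in $D((A+F)_0)$ that also satisfy the lower bounds $x_{j}\ge c\pi_j$ needed for the $g$-terms to be defined. We plan to construct the sequence explicitly. We mollify $x_{j,0}$, take the maximum with $c\pi_j/2$ (both in $W^{1,1}$, and the maximum is Lipschitz), and then correct the boundary value at $a=0$ on a small interval so that $x_j(0)=S\int\beta_jx_j$, keeping positivity. Continuity then lets us pass to the limit in the integrated identity; Fatou's lemma suffices for the $g$-terms.

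LaSalle then shows that on $\alpha(\phi)$ and $\omega(z)$ we have $S\equiv S^*_{\sigma_k}$ and $x_j/x^*_{j,\alpha_j}$ constant in $a$. The $S$-equation forces these points to be equilibria in $\E_{k,\J}$, and $x_j\to0$ for $j$ outside $\J$. We choose $\alpha$ to be the equilibrium in $\alpha(\phi)$; then $L_k^\alpha$ vanishes there, is nonincreasing, and is nonnegative, so $L_k^\alpha\equiv0$ along $\phi$ and $z=E^{*,k}_\alpha$. Hence $\A_{\J}\subset\E_{k,\J}$. Each point of $\E_{k,\J}$ is stable, because $L_k^\alpha$ controls the distance to it. GAS of the set then follows, and taking the union over $\J$ gives the statement for $\E_k$.
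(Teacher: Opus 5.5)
Your architecture matches the paper's: induction on $n$, $L_0$ on $\cap_{j\le\sigma_{n_>}}\partial\S_j$, uniform persistence giving an interior attractor, an ad hoc approximating sequence in $D((A+F)_0)$ that keeps lower bounds, and LaSalle on complete orbits. The persistence step, however, has a genuine gap. You claim the boundary attractor for your $\rho$ (the paper's $\p$) is $E_0$ ``by item 1 applied to the reduced system''. On $\{\rho=0\}$ the remaining populations are those with $j>\sigma_k$, and item 1 covers them only if every $\RR_{0,j}\le 1$, i.e.\ $k=n_>$. Whenever $k<n_>$ this fails. That includes $k=1$ as soon as $n_>\ge 2$, which is the case the induction really has to establish; already for $n=2$ with $\RR_{0,1}>\RR_{0,2}>1$ the boundary $\omega$-limit sets are $\{E_0\}\cup\E_{k+1}\cup\dots\cup\E_{n_>}$. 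To apply the Hale--Waltman/Smith--Thieme acyclicity theorem you must show three things about the sets $\E_l$, $l>k$:
\begin{itemize}
\item each $\E_l$ is weakly $\rho$-repelling, which is easy since $S\approx 1/r_{\sigma_l}$ near $\E_l$ and $r_{\sigma_k}/r_{\sigma_l}>1$;
\item each $\E_l$ is isolated in $X_+$;
\item the family $\{E_0\},\E_{k+1},\dots,\E_{n_>}$ is acyclic.
\end{itemize}
The last two are the substantive part, because $\E_l$ is a continuum of equilibria when $\sigma_l-\sigma_{l-1}\ge 2$. You must exclude complete orbits in the boundary whose $\alpha$- and $\omega$-limit sets lie in $\E_l$ without the orbit lying in $\E_l$, and you must identify the maximal invariant set near $\E_l$. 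The paper does this in points 2(ii), 3(ii), 4(ii) of Section~\ref{Sec:existence_A0}, using the induction hypothesis in its Lyapunov form. That is why the theorem carries the statement about $L_k^{\alpha}$ on compact invariant sets.

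Three further points.
\begin{itemize}
\item \emph{Per-strain lower bounds.} Persistence for $\rho=\sum_{j\in\J}\int_0^{\overline{\beta_j}}x_j$ bounds only the sum from below. It does not give $x_j\ge c\pi_j$ for each $j\in\J$ on the interior attractor, which contains $\E_{k,\{j_1\}}$, where $x_{j_2}\equiv 0$. You must split according to the exact index set $\{j: z\in\S_j\}$, which is constant along complete orbits. You then need uniform positivity of $\int\beta_jx_j$ on compact invariant subsets of $\cap_{j\in\J}\S_j$; this is the paper's uniform-$\tau_k$ argument leading to \eqref{Eq:int_c}--\eqref{Eq:cs_cx}.
\item \emph{Choice of equilibrium in $\alpha(\phi)$.} LaSalle only gives $\alpha(\phi)\subset\E_{k,\J}$, possibly a continuum, so there is no ``the'' equilibrium in $\alpha(\phi)$. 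Read instead as ``any $E^{*,k}_\tau\in\alpha(\phi)$'', your endgame is sound and shorter than the paper's Steps 3--5, which prove single-point limit sets, then $\tau=\omega$, then use the homoclinic structure. Indeed $L_k^\tau\ge 0$ vanishes at $E^{*,k}_\tau$ and is nonincreasing, hence vanishes along $\phi$. This needs $L_k^\tau$ to be a Lyapunov functional when $\tau$ has zero entries in $\J$ (you only treat all $\alpha_j>0$). It also needs continuity of $L_k^\tau$ at $E^{*,k}_\tau$ along the orbit, which follows from the uniform lower bounds. Fed by the induction hypothesis, the same device is one way to exclude the boundary homoclinic orbits above.
\item \emph{Stability of individual equilibria.} This does not follow from $L_k^\alpha$, which is not continuous on $X_+$. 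It equals $+\infty$ at points $L^1$-close to $E^{*,k}_\alpha$ where some $x_j$, $j\in\J$, vanishes on a set of positive measure in $[0,\overline{\beta_j})$. This is harmless: only stability of the set is claimed, and that comes from the local asymptotic stability of the interior attractor.
\end{itemize}
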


\section{Proof of the global stability}

In this section we prove Theorem \ref{Thm:GAS} by induction on $n\geq 1$. The initialisation is clear since it is handled in \cite{MagCluskWebb2010}.

\noindent Let $n\geq 2$ and we suppose that the statements of Theorem \ref{Thm:GAS} are true for $n-1$ (in others words, for the system \eqref{Eq:model} with only $n-1$ infected populations then the asymptotic behavior is known for every case). We will prove that the statements are still true for $n$.

If $z\in \partial \S_1$, then $\Phi_t(z)\in \partial \S_1$ by invariance of $\partial \S_1$ and from \eqref{Eq:estimate_phik_t} it is clear that $\|x_{1}(t,.)\|_{L^1(\R_+)}\to 0$ when $t\to \infty$. Consequently the model \eqref{Eq:model} is reduced to $n-1$ populations (which are $\llbracket 2,n\rrbracket$) and by recurrence hypothesis we get the first item when $n_>\neq 0$ and the second item is also true for every $k\in\llbracket 2,n_>\rrbracket$.

In all that follow we assume that \eqref{Hyp:R0-decrease} and Assumption \ref{Assum:1} hold. We start with the following lemmas.

\begin{lemma}\label{Lemma:L0}
Let $v\in \A$, then the function $t\longmapsto L_0(\Phi_t(v))$ is well-defined on $\R_+$ and for every $t>0$ it satisfies
\begin{equation}
\begin{array}{rll}
    \label{Eq:L0_deriv}
   \dfrac{dL_0(\Phi_{t}(v))}{dt}&=-\dfrac{(\Lambda-\mu_S \Phi_t^S(v))^2}{\mu_S \Phi_t^S(v)}+\displaystyle \sum_{k=1}^n \left(\dfrac{\RR_{0,k}-1}{r_k}\right)\int_0^\infty \beta_k(a)\Phi_t^k(v)(a)da.
\end{array}
\end{equation}
In particular $L_0$ is a Lyapunov functional on $\A \cap \left(\bigcap_{j=1}^{\sigma_{n_>}}\partial \S_j\right)$.
\end{lemma}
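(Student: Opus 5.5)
We will prove the lemma in three steps. First we check that $L_0$ is well defined along trajectories from $\A$. Then we compute the derivative for classical solutions. Finally we extend the formula to all of $\A$ by density combined with state-continuity.

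\textbf{Step 1: well-definedness.} By Corollary \ref{Cor:attractor}, every $v\in\A$ has $S$-component in $[c_S,\Lambda/\mu_0]$, and $\A$ is invariant. Hence $\Phi_t^S(v)\ge c_S>0$ for all $t\ge0$, so $g(\Phi_t^S(v)/S_0^*)$ is finite and bounded. Each $\Psi_k$ lies in $L^\infty_+$; indeed $\Psi_k\le \|\beta_k\|_{L^\infty}/(\mu_0 r_k)$. Since $\Phi_t^k(v)\in L^1_+$, every integral $\int_0^\infty\Psi_k\Phi_t^k(v)\,da$ is finite. So $t\mapsto L_0(\Phi_t(v))$ is well defined, and it is continuous by continuity of the semiflow.

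\textbf{Step 2: derivative for classical solutions.} Take $v\in X_+$ whose associated $u_0$ lies in $D((A+F)_0)$, i.e. $x_{k,0}\in W^{1,1}$ with $x_{k,0}(0)=S_0\int\beta_kx_{k,0}$. For such data the solution is classical, so we may differentiate under the integral and use the PDE. The $\Psi_k$ satisfy $\Psi_k'=\mu_k\Psi_k-\beta_k/r_k$ and $\Psi_k(0)=1$. Integrating by parts in $a$ (boundary term at infinity vanishes since $x_k(t,\cdot)\in W^{1,1}$) gives
$$\frac{d}{dt}\int_0^\infty\Psi_kx_k\,da=\Psi_k(0)x_k(t,0)-\frac1{r_k}\int_0^\infty\beta_kx_k\,da=\Bigl(S(t)-\tfrac1{r_k}\Bigr)\int_0^\infty\beta_kx_k\,da.$$
The $S$-term contributes $(1-S_0^*/S)S'$. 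Substituting $\Lambda=\mu_SS_0^*$ yields $-(\Lambda-\mu_SS)^2/(\mu_SS)+\sum_k(S_0^*-S)\int\beta_kx_k$. Adding the two pieces and using $S_0^*-1/r_k=(\RR_{0,k}-1)/r_k$ gives \eqref{Eq:L0_deriv}. Because the right-hand side is continuous in $t$, we can equivalently write the integrated identity $L_0(\Phi_t(v))-L_0(\Phi_s(v))=\int_s^t(\text{RHS})\,d\tau$.

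\textbf{Step 3: extension to $\A$ (main obstacle).} For general $v\in\A$, $v$ need not be classical. We approximate $v$ by $v_m\in D((A+F)_0)\cap X_+$, which is dense in $X_+$ as in \cite{MagCluskWebb2010}. The approximations must also keep $S$ bounded away from $0$, which is immediate since $S_{0,m}\to S_0\ge c_S$. By Proposition \ref{Prop:solutions}(4), $\Phi_\tau(v_m)\to\Phi_\tau(v)$ uniformly in $\tau\in[0,t]$. On this set $S$ stays above $c_S/2$ for large $m$, and the $\Psi_k$ are bounded, so $L_0$ and the integrand in \eqref{Eq:L0_deriv} are continuous there. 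We then pass to the limit in the integrated identity for $v_m$ to obtain it for $v$. Differentiating the resulting identity gives \eqref{Eq:L0_deriv} for every $t>0$.

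Finally, if $v\in\A\cap\bigcap_{j\le\sigma_{n_>}}\partial\S_j$, positive invariance of $\partial\S_j$ and \eqref{Eq:estimate_phik_t} give $\int\beta_j\Phi_t^j(v)=0$ for $j\le\sigma_{n_>}$. Every remaining index has $\RR_{0,k}\le1$. Hence both terms of \eqref{Eq:L0_deriv} are nonpositive, and $L_0$ is a Lyapunov functional there.
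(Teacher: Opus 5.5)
Your proposal is correct and follows the paper's own route. You establish well-definedness from $c_S$ and the invariance of the attractor. You compute the derivative explicitly for data in $D((A+F)_0)$. You then use density together with state-continuity uniformly in finite time to pass to the limit in the integrated identity, and conclude nonpositivity on the intersection of the sets $\partial\mathcal{S}_j$ from $\mathcal{R}_{0,k}\le 1$ for the remaining indices. The only difference is that the paper writes out the continuity estimates $\kappa_1,\dots,\kappa_4$ explicitly, whereas you invoke continuity of $L_0$ and of the integrand on the region where $S\ge c_S/2$.
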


\begin{proof}
Using \eqref{Eq:c_s} implies that $L_0(\Phi_t(v))$ is well-defined on $\R_+$. Let $v:=(S_0,x_{1,0},...,x_{n,0})\in \A$ and \break $\widehat{v}:=(S_0,(0,x_{1,0}),...,(0,x_{n,0}))\in \X_+$. If $\widehat{v}\in D((A+F)_0)\cap \X_{0+}$ (we remind the definition \eqref{Eq:domain_A+F}) then using \cite[Theorem 5.6.6, p. 242]{MagalRuan2018} we may compute the derivative of the function $t\longmapsto L_0(\Phi_t(v))$ according to $t$ since $\Phi_t^k(v)$ belongs to $W^{1,1}(0,\infty)$ for each $t\geq 0$ and every $k\in\llbracket 1,n\rrbracket$. Similar computations to \cite[Proposition 5.3.1]{Richard2020} lead for every $t>0$ to \eqref{Eq:L0_deriv} which can be rewritten in the following form for every $t\geq 0$
\begin{equation}\label{Eq:L0_int}
\begin{array}{rll}
L_0(\Phi_{t}(v))&=L_0(v)-\displaystyle\int_0^t \dfrac{(\Lambda-\mu_S\Phi_{\xi}^S(v))^2}{\mu_S\Phi_{\xi}^S(v)}d\xi +\sum_{k=1}^n \left(\dfrac{\RR_{0,k}-1}{r_k}\right)\int_0^t \int_0^\infty \beta_k(a)\Phi_{\xi}^k(v)(a)da d\xi.
\end{array}
\end{equation}
If $\widehat{v}\not\in D((A+F)_0)\cap \X_{0+}$ then we let $t\geq 0$ and we use the density of $D((A+F)_0)\cap \X_{0+}$ into $\X_{0+}$ (see \cite[Lemma 5.6.7, p. 243]{MagalRuan2018}) to compute the derivative of the function $L_0$ along the solution of \eqref{Eq:model} for the initial condition $v$. Thus we can assert that
$$\exists \ (\widehat{v}^{(m)})_{m\in\N}\subset (D((A+F)_0)\cap \X_{0+})^{\N}:  \|\widehat{v}^{(m)}-\widehat{v}\|_{\X}\underset{m\to \infty}{\longrightarrow} 0$$
where $\widehat{v}^{(m)}=(S_{0}^{(m)},(0,x_{1,0}^{(m)}),...,(0,x_{n,0}^{(m)}))\in D((A+F)_0)\cap \X_{0+}$. Denoting $v^{(m)}=(S_{0}^{(m)},x_{1,0}^{(m)},...,x_{n,0}^{(m)})\in X_{+}$ and using Proposition \ref{Prop:solutions} (4), we see that this sequence satisfies
$$\sup_{s\in[0,t]}\|\Phi_s(v^{(m)})-\Phi_s(v)\|_{X}\underset{m\to \infty}{\longrightarrow}0.$$
Let $\ep>0$ be small enough and $m_{\ep}\in \N$ such that
\begin{equation}\label{Eq:estimates_vm}
\|\widehat{v}^{(m)}-\widehat{v}\|_{\X}\leq \ep, \quad \sup_{s\in[0,t]}\|\Phi_s(v^{(m)})-\Phi_s(v)\|_{X}\leq \ep, \quad \forall m\geq m_{\ep}.
\end{equation}
Using \eqref{Eq:c_s} we get $S_0^{(m)}\geq c_S-\ep>0$ for each $m\geq m_{\ep}$ so it is clear that the function $s\longmapsto L_0(\Phi_s(v^{(m)}))$ is well-defined on $[0,t]$ and its derivative can be computed as for \eqref{Eq:L0_deriv} leading to
\begin{equation*}
\begin{array}{rll}
L_0(\Phi_{s}(v^{(m)}))&=L_0(v^{(m)})-\displaystyle\int_0^s \dfrac{(\Lambda-\mu_S\Phi_{\xi}^S(v^{(m)}))^2}{\mu_S\Phi_{\xi}^S(v^{(m)})}d\xi +\sum_{k=1}^n \left(\dfrac{\RR_{0,k}-1}{r_k}\right)\int_0^s \int_0^\infty \beta_k(a)\Phi_{\xi}^k(v^{(m)})(a)da d\xi
\end{array}
\end{equation*}
for every $s\in[0,t]$ and every $m\geq m_{\ep}$. The goal is to show that
\begin{equation}\label{Eq:L0_v}
\begin{array}{rll}
L_0(\Phi_{s}(v))&=L_0(v)-\displaystyle\int_0^s \dfrac{(\Lambda-\mu_S\Phi_{\xi}^S(v))^2}{\mu_S\Phi_{\xi}^S(v)}d\xi +\sum_{k=1}^n \left(\dfrac{\RR_{0,k}-1}{r_k}\right)\int_0^s \int_0^\infty \beta_k(a)\Phi_{\xi}^k(v)(a)da d\xi.
\end{array}
\end{equation}
For this, we compute for every $s\in[0,t]$
\begin{equation*}
    \begin{array}{rcl}
    &&\left|L_0(\Phi_s(v))-L_0(v)+\displaystyle\int_0^s \dfrac{(\Lambda-\mu_S\Phi_{\xi}^S(v^{(m)}))^2}{\mu_S \Phi_{\xi}^S(v^{(m)})}d\xi -\sum_{k=1}^n \left(\dfrac{\RR_{0,k}-1}{r_k}\right)\int_0^s \int_0^\infty \beta_k(a)\Phi_{\xi}^k(v^{(m)})(a)dad\xi  \right| \vspace{0.2cm}\\ 
    & \leq& \left|L_0(\Phi_s(v))-L_0(\Phi_s(v^{(m)}))\right| +s\times \sup_{\xi\in[0,s]}\left|\dfrac{(\Lambda-\mu_S\Phi_{\xi}^S(v))^2}{\mu_S \Phi_{\xi}^S(v)}-\dfrac{(\Lambda-\mu_S\Phi_{\xi}^S(v^{(m)}))^2}{\mu_S \Phi_{\xi}^S(v^{(m)})}\right| \\
    & &+\left|L_0(v)-L_0(v^{(m)})\right|+\displaystyle\sum_{k=1}^n \left(\dfrac{\RR_{0,k}-1}{r_k}\right) s\times \|\beta_k\|_{L^\infty} \sup_{\xi\in[0,s]}\|\Phi_{\xi}^k(v)-\Phi_{\xi}^k(v^{(m)})\|
    \end{array}
\end{equation*}
Since $\A$ is invariant then by using \eqref{Eq:c_s} we get $\Phi_s^S(v)\in [c_S,\frac{\Lambda}{\mu_0}]$ for each $s\in[0,t]$ and from \eqref{Eq:estimates_vm} we deduce that $\Phi_s^S(v^{(m)})\in[c_S-\ep,\frac{\Lambda}{\mu_0}+\ep]$ for each $s\in[0,t]$ and every $m\geq m_{\ep}$. It shows both that
$$\left|S_0^*g\left(\dfrac{\Phi_s^S(v)}{S_0^*}\right)-S_0^*g\left(\dfrac{\Phi_s^S(v^{(m)})}{S_0^*}\right)\right|\leq \left|\Phi_s^S(v)-\Phi_s^S(v^{(m)})\right|\times \left(1+\dfrac{S_0^*}{c_S-\ep}\right)\leq \ep\left(1+\dfrac{S_0^*}{c_S-\ep}\right):=\kappa_1(\ep)$$
and that
\begin{equation*}
\begin{array}{rcl}
&&s\times \sup_{\xi\in[0,s]}\left|\dfrac{(\Lambda-\mu_S\Phi_{\xi}^S(v))^2}{\mu_S \Phi_{\xi}^S(v)}-\dfrac{(\Lambda-\mu_S\Phi_{\xi}^S(v^{(m)}))^2}{\mu_S \Phi_{\xi}^S(v^{(m)})}\right| \\
&\leq&t\ep\left(\dfrac{\Lambda}{\mu_S(c_S-\ep)}\left(2\Lambda\mu_S +\mu_S^2\left(\dfrac{\Lambda}{\mu_0}+\ep\right)\right) +\dfrac{\left(\Lambda+\mu_S\left(\dfrac{\Lambda}{\mu_0}+\ep\right)\right)^2}{c_S(c_S-\ep)}\right):=\kappa_2(\ep)
\end{array}
\end{equation*}
Using again \eqref{Eq:estimates_vm} we have for every $s\in[0,t]$ and every $m\geq m_{\ep}$ on one hand that
\begin{flalign*}
\left|\sum_{k=1}^n \int_0^\infty \Psi_k \Phi_s^k(v)(a)da-\sum_{k=1}^n \int_0^\infty \Psi_k \Phi_s^k(v^{(m)})(a)da\right|&\leq \sum_{k=1}^n \|\Psi_k\|_{L^\infty}\|\Phi_s^k(v^{(m)})-\Phi_s^k(v)\|_{L^1} \\
&\leq \ep\sum_{k=1}^n \|\Psi_k\|_{L^\infty}:=\kappa_3(\ep)
\end{flalign*}
leading to
$$\left|L_0(\Phi_s(v))-L_0(\Phi_s(v^{(m)}))\right|+\left|L_0(v)-L_0(v^{(m)})\right|\leq 2(\kappa_1(\ep)+\kappa_3(\ep))$$
and on the other hand that
$$\sum_{k=1}^n \left(\dfrac{\RR_{0,k}-1}{r_k}\right) s\times \|\beta_k\|_{L^\infty} \sup_{\xi\in[0,s]}\|\Phi_{\xi}^k(v)-\Phi_{\xi}^k(v^{(m)})\|\leq \ep\sum_{k=1}^n \left(\dfrac{\RR_{0,k}-1}{r_k}\right) t\times \|\beta_k\|_{L^\infty}:=\kappa_4(\ep).$$
Since $\kappa_k(\ep)\underset{\ep\to 0}{\longrightarrow}0$ for each $k\in\llbracket 1,4\rrbracket$ then clearly \eqref{Eq:L0_v} is proved for each $s\in[0,t]$ whence \eqref{Eq:L0_int} and consequently  \eqref{Eq:L0_deriv} hold for every $v\in \A$. Finally, if $v\in \A \cap \left(\bigcap_{j=1}^{\sigma_{n_>}}\partial \S_j\right)$, it is clear that $\frac{dL_0(\Phi_t(v))}{dt}\leq 0$ for every $t\geq 0$, hence $L_0$ is a Lyapunov functional on $\A \cap \left(\bigcap_{j=1}^{\sigma_{n_>}}\partial \S_j\right)$.
\end{proof}

\begin{lemma}\label{Lemma:L0-constant}
Let $v\in \A\cap\left( \bigcap_{j=1}^{\sigma_{n_>}}\partial \S_j\right)$ and $\gamma(v):=\{\psi(t),t\in \R\}\subset \Gamma(v)\subset \A$ be a complete orbit through $v$. If $L_0$ is constant on $\gamma(v)$ then $\gamma(v)=\{E_0\}$. In particular, $v=\{E_0\}$.
\end{lemma}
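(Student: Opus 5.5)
The plan is to exploit the derivative formula \eqref{Eq:L0_deriv} of Lemma \ref{Lemma:L0} along the complete orbit, which lies in $\A$ by invariance. Write $\psi(t)=(S(t),x_1(t,\cdot),\dots,x_n(t,\cdot))$. For each $s\in\R$ and $t\geq 0$ we have $\Phi_t(\psi(s))=\psi(t+s)$, so Lemma \ref{Lemma:L0} applies to $v=\psi(s)\in\A$. Since $L_0$ is constant on $\gamma(v)$, the derivative vanishes for all times. The orbit also stays in $\bigcap_{j=1}^{\sigma_{n_>}}\partial\S_j$, because the semiflow maps $\psi(s)$ onto $\psi(t+s)$ for all $t\ge0$, including onto $v$.

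This invariance has to be checked: positive invariance of $\partial\S_j$ does not obviously give backward invariance. I would argue directly. If $\psi(s)\in\S_j$ for some $s<0$, then positive invariance of $\S_j$ gives $v=\Phi_{-s}(\psi(s))\in\S_j$, a contradiction. So every $\psi(s)\in\partial\S_j$ for $j\le\sigma_{n_>}$, and by \eqref{Eq:estimate_phik_t} we get $\int\beta_j x_j(t,a)\,da=0$ for all $t$ and all such $j$. For the remaining indices $k>\sigma_{n_>}$ we have $\RR_{0,k}\le1$. The vanishing of \eqref{Eq:L0_deriv} is then a sum of nonpositive terms equal to zero, so each term vanishes:
\[
(\Lambda-\mu_S S(t))^2=0 \quad\text{and}\quad (\RR_{0,k}-1)\int_0^\infty\beta_k x_k(t,a)\,da=0 \quad \text{for all } t\in\R .
\]
Hence $S(t)\equiv S_0^*$.

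Next, since $S$ is constant, the $S$-equation gives $0=S'(t)=\Lambda-\mu_S S_0^*-S_0^*\sum_k\int\beta_kx_k\,da$. This yields $\int_0^\infty\beta_k x_k(t,a)\,da=0$ for every $k$ and every $t\in\R$. This handles the indices with $\RR_{0,k}=1$, where the coefficient in the derivative vanishes and gives no information by itself. The boundary condition then gives $x_k(t,0)=0$ for all $t$.

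The remaining step, which I expect to be the main obstacle, is to show $x_k(t,\cdot)\equiv0$ for the complete orbit. I would use the Duhamel formula \eqref{Eq:Duhamel_phik} applied from time $s-\tau$ to $s$. The term $\Phi^{k,2}_\tau$ vanishes because the incidence is zero. The term $\Phi^{k,1}_\tau$ has $L^1$ norm at most $\|x_k(s-\tau,\cdot)\|_{L^1}e^{-\mu_0\tau}\le \frac{\Lambda}{\mu_0}e^{-\mu_0\tau}$, using the bound $\A\subset B_X(0,\Lambda/\mu_0)$ from Corollary \ref{Cor:attractor}. Letting $\tau\to\infty$ gives $x_k(s,\cdot)=0$ for every $s$. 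Therefore $\psi(s)=E_0$ for all $s$, so $\gamma(v)=\{E_0\}$ and in particular $v=E_0$.
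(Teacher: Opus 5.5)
Your proof is correct and follows the paper's route: the vanishing of \eqref{Eq:L0_deriv} forces $S\equiv S_0^*$, the $S$-equation then kills every incidence $\int_0^\infty\beta_k x_k\,da$, and the Duhamel formula \eqref{Eq:Duhamel_phik} along the complete orbit kills each $x_k$. There are two small differences. First, you explicitly check that the whole orbit, including negative times, stays in $\bigcap_{j=1}^{\sigma_{n_>}}\partial\S_j$, which the paper uses only tacitly to fix the sign of the derivative. Second, you finish with the decay bound $\frac{\Lambda}{\mu_0}e^{-\mu_0\tau}$ on the transported initial datum, whereas the paper uses the age-shift identity $\Phi_t^k(\psi(s))(a)=\Phi_{t+a}^k(\psi(s-a))(a)$.
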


\begin{proof}
Let $v:=(S_0,x_{1,0},...,x_{n,0})\in \A\cap\left( \bigcap_{j=1}^{\sigma_{n_>}}\partial \S_j\right)$ and $\gamma(v):=\{\psi(t), t\in \R\}\subset \Gamma(v)\subset \A $ be a complete orbit through $v$. Suppose that $L_0$ is constant on $\gamma(v)$. It implies that
$$\dfrac{d}{dt}L_0(\Phi_t(\psi(s)))=0, \quad \forall (t,s)\in\R_+^*\times \R.$$
From \eqref{Eq:L0_deriv} we get $\Phi_t^S(\psi(s))=S_0^*$ for every $(t,s)\in\R_+\times \R$ (in particular $S_0=S_0^*$). Using the $S$-equation of \eqref{Eq:model}, we deduce that
$$\int_0^\infty \beta_k(a)\Phi_t^k(\psi(s))(a)da=0, \quad \forall (t,s,k)\in \R^*_+\times \R\times \llbracket 1,n\rrbracket.$$
With \eqref{Eq:Duhamel_phik} (second equation) we get
$$\Phi_t^k(\psi(s))(a)=0, \quad \forall (t,s,a)\in \R_+\times \R\times [0,t].$$
We remark that 
$$\Phi_t^k(\psi(s))(a)=\Phi_{t+a}^k(\psi(s-a))(a)=0, \quad \forall (t,s,a)\in\R_+\times \R\times [t,+\infty)$$
so it follows that $x_{k,0}\equiv 0$ for every $k\in\llbracket 1,n\rrbracket$, whence $v=\{E_0\}$ and result is proved.
\end{proof}

\subsection{Case $\RR_{0,1}\leq 1$}

We suppose here that $n_>=0$ which is equivalent to $\RR_{0,k}\leq 1$ for each $k\in\llbracket 1,n\rrbracket$. We now show that $\A=\{E_0\}$. Let $z\in \A$. By invariance of $\A$ there exists a complete orbit $\gamma(z)=\{\phi(s),s\in \R\}\subset \A$ through $z$. From Lemma \ref{Lemma:L0} we know that $L_0(\phi(t))$ is well-defined for each $t\in \R$. Also the equation \eqref{Eq:L0_deriv} is satisfied for every $v\in \gamma(z)$. The fact that $\RR_{0,k}\leq 1$ for each $k\in\llbracket 1,n\rrbracket$ implies that $L_0$ is a Lyapunov function on $\A$ (and in particular on $\gamma(z)$). Using \cite[Proposition 2.51 p. 53]{SmithThieme2011} we deduce that $L_0$ is constant on $\alpha(\phi)$ and on $\omega(z)$. Since $\gamma(z)\subset \A$ which is compact, then $\gamma^-(z):=\{\phi(s),s\leq 0\}$ is relatively compact on $X$ and non empty. From \cite[Theorem 2.48, p.52]{SmithThieme2011} the alpha-limit set $\alpha(\phi)$ is nonempty, compact, invariant, connected and $\lim_{t\to -\infty}d(\phi(t),\alpha(\phi))=0$. Since $L_0$ is constant on $\alpha(\phi)$ then it is constant on every complete orbit $\gamma(v)\subset \alpha(\phi)$ and by Lemma \ref{Lemma:L0-constant} we get $\alpha(\phi)=\{E_0\}$. It follows that
$$\lim_{s\to -\infty}d(\phi(s),\{E_0\})=0.$$
Using the facts that $L_0(E_0)=0$ and that $L_0$ is a Lyapunov function on $\gamma(z)$, then we necessarily get $L_0(\phi(s))=0$ for every $s\in\R$.
Finally $\{\phi(s),s\in \R\}\subset \Gamma(z)\subset \A$ is a complete orbit through $z\in \A$ on which $L_0$ is constant. Using again Lemma \ref{Lemma:L0-constant} we get $z=\{E_0\}$. In conclusion $\A=\{E_0\}$ which is GAS on $X_+$ and the first item is proved.

\subsection{Case $\RR_{0,1}>1$}

In this section, we suppose that $n_>\geq 1$. It remains to prove the second item for $k=1$. We first recall the definitions of the different notions of persistence.
\begin{definition}[{\cite[Def 3.1 p. 61]{SmithThieme2011}}]
Let $\p:X_+\to \R_+$ be a continuous function non identically zero. The semiflow $\Phi$ is:
\begin{enumerate}[label=(\arabic*)]
\item weakly $\p$-persistent if
$$\forall x\in X: \p(x)>0, \quad \limsup_{t\to +\infty}\p(\Phi_t(x))>0,$$
\item (strongly) $\p$-persistent if
$$\forall x\in X: \p(x)>0, \quad \liminf_{t\to +\infty}\p(\Phi_t(x))>0,$$
\item uniformly weakly $\p$-persistent if $\exists \ \ep>0$:
$$\forall x\in X : \p(x)>0, \quad \limsup_{t\to +\infty}\p(\Phi_t(x))\geq \ep,$$
\item uniformly (strongly) $\p$-persistent if $\exists \ \ep>0$:
$$\forall x\in X: \p(x)>0, \quad \liminf_{t\to +\infty}\p(\Phi_t(x))\geq \ep.$$
\end{enumerate}
\end{definition}

\subsubsection{Existence of a global attractor}
\label{Sec:existence_A0}

We start with the following lemma.

\begin{lemma}\label{Lemma:L_k-Lyap}
Suppose that $n_>\geq 2$ and let $k\in\llbracket 2,n_>\rrbracket$. Let $\J\subset \llbracket 1+\sigma_{k-1},\sigma_k\rrbracket$ with $\J\neq \emptyset$. Let
$$z\in\left(\bigcap_{j=1}^{\sigma_{k-1}} \partial \S_j\right)\cap \left(\bigcap_{j\in \J} \S_j\right)\cap \left(\bigcap_{j\in \llbracket 1+\sigma_{k-1},\sigma_k\rrbracket\setminus \J} \partial \S_j\right)$$
and let
$$\gamma(z)=\{\phi(t),t\in\R\}\subset \left(\bigcap_{j=1}^{\sigma_{k-1}} \partial \S_j\right)\cap \left(\bigcap_{j\in \J} \S_j\right)\cap \left(\bigcap_{j\in \llbracket 1+\sigma_{k-1},\sigma_k\rrbracket\setminus \J} \partial \S_j\right)$$
be a complete orbit through $z$. Let $(\alpha_{1+\sigma_{k-1}},...,\alpha_{\sigma_k})\in[0,1]^{\sigma_k-\sigma_{k-1}}$ such that $\sum_{j=1+\sigma_{k-1}}^{\sigma_k}\alpha_j=1$ and with $\alpha_j=0$ for every $j\in\llbracket 1+\sigma_{k-1},\sigma_k\rrbracket \setminus \J$. Let $\J^*=\{j\in\J: \alpha_j>0\}$. Then:
\begin{enumerate}
\item The functional $L_k^{\alpha_{1+\sigma_{k-1}},...,\alpha_{\sigma_k}}$ defined by \eqref{Eq:Lk} is a Lyapunov functional on the complete orbit $\gamma(z)$ and for each $(t,v)\in \R_+^*\times \gamma(z)$ we have:
\begin{equation}
    \begin{array}{rll}
    \label{Eq:Lk_deriv-inf}
       &\dfrac{dL_k^{\alpha_{1+\sigma_{k-1}},...,\alpha_{\sigma_k}}(\Phi_{t}(v))}{dt}=\displaystyle -\sum_{j\in\J^*}S^*_{\sigma_k}\int_0^\infty \beta_j(a)x^*_{j,\alpha_j}(a)g\left(\dfrac{\Phi_t^j(v)(a)\int_0^\infty \beta_j(s)x_{j,\alpha_j}^*(s)ds}{x_{j,\alpha_j}^*(a)\int_0^\infty \beta_j(s)\Phi_t^j(v)(s)ds}\right)da \\
       &\qquad -|\J^*| \displaystyle g\left(\dfrac{S^*_{\sigma_k}}{\Phi_t^S(v)}\right)- \sum_{j=1+\sigma_k}^n \int_0^\infty \beta_j(a)\Phi_t^j(v)(a)\left(\dfrac{1}{r_j}-S^*_{\sigma_k}\right)da-\dfrac{\mu_S}{\Phi_t^S(v)}(\Phi_t^S(v)-S^*_{\sigma_k})^2.
    \end{array}
\end{equation}
\item Suppose that $\J^*=\J$, that is for each $j\in \J$ we have $\alpha_j>0$. Suppose also that $L_k^{\alpha_{1+\sigma_{k-1}},...,\alpha_{\sigma_k}}$
is constant on $\gamma(z)$. Then $\gamma(v)\subset \E_{k,\J}$ (which is defined by \eqref{Eq:equilibria_kJ}). In particular $v\in \E_{k,\J}$.
\end{enumerate}
\end{lemma}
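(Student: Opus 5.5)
The plan has two parts: first make $L_k$ well-defined along the orbit and compute its derivative, then use the vanishing of that derivative to identify the orbit.

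\textbf{Well-definedness.} Along $\gamma(z)$, the components $j\in\llbracket 1,\sigma_{k-1}\rrbracket$ and $j\in\llbracket 1+\sigma_{k-1},\sigma_k\rrbracket\setminus\J$ lie in $\partial\S_j$. By \eqref{Eq:estimate_phik_t} their infectivity $\int\beta_j x_j$ vanishes, and they only enter $L_k$ through the linear terms $\eta_j\Psi_jx_j$, which are harmless. The same holds for $j\in\J\setminus\J^*$. The delicate terms are those with $j\in\J^*$, where $\int_0^\infty \Psi_j x^*_{j,\alpha_j} g(x_j/x^*_{j,\alpha_j})$ must be finite.

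For these I would use the Duhamel formula \eqref{Eq:Duhamel_phik} along the complete orbit. Writing $B_j(t)=\psi^S(t)\int\beta_j\psi^j(t)$, one gets $\psi^j(t)(a)=B_j(t-a)\pi_j(a)$ for every $a\ge0$. So $x_j/x^*_{j,\alpha_j}=B_j(t-a)/B^*_j$, and finiteness reduces to $\ln B_j$ being integrable against $\Psi_j\pi_j$. Here I need a uniform lower bound $B_j\ge c>0$ along the orbit. I would take this as part of the standing hypothesis that $\gamma(z)$ stays in $\S_j$ and lies in a compact invariant set where strong uniform persistence holds, which is exactly the purpose of the following subsection. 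The upper bound comes from \eqref{Eq:estimate_phi_t}, and $S\ge c_S$ from \eqref{Eq:c_s}.

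\textbf{Derivative.} For initial data in $D((A+F)_0)$ I would differentiate formally, using $\partial_t x_j=-\partial_a x_j-\mu_jx_j$. The key identity is $\Psi_j'=\mu_j\Psi_j-\beta_j/r_j$, together with $\Psi_j(0)=1$. Integrating by parts then produces boundary terms $\Psi_j(0)x_j(t,0)=S\int\beta_jx_j$.

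For $j\in\J^*$, the $g$-term yields
\[
x_j(t,0)\Bigl(1-\tfrac{x^*_j(0)}{x_j(t,0)}+\ln\tfrac{x^*_j(0)}{x_j(t,0)}\Bigr)+\tfrac{1}{r_j}\int\beta_j x^*_j\Bigl(\tfrac{x_j}{x^*_j}-\ln\tfrac{x_j}{x^*_j}\Bigr).
\]
Combining with the $S$-term, and using $\Lambda=\mu_SS^*_{\sigma_k}+S^*_{\sigma_k}\sum_{j\in\J^*}\int\beta_jx^*_j$, $r_jS^*_{\sigma_k}=1$ on the block, and $\sum_{j\in\J^*}\alpha_j=1$, should give \eqref{Eq:Lk_deriv-inf}. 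The $g$ terms regroup in the standard McCluskey way. Note that the $|\J^*|$ coefficient requires rescaling each $g$ with the weights $S^*\int\beta_jx^*_j$; I would check the bookkeeping carefully at this point.

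The lower-index components contribute nothing, since their $\beta_jx_j$ vanish. For $j>\sigma_k$, the term $\bigl(S^*_{\sigma_k}-\tfrac1{r_j}\bigr)\int\beta_jx_j$ is $\le0$, because $\RR_{0,j}<\RR_{0,\sigma_k}$ gives $1/r_j>S^*_{\sigma_k}$. This sign is what yields the Lyapunov property.

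For general initial data on the orbit, I would mimic the density argument of Lemma \ref{Lemma:L0} in integrated form, combined with Proposition \ref{Prop:solutions}(4). \textbf{This is the main obstacle:} the approximating data $v^{(m)}$ must also satisfy the lower bounds $x^{(m)}_j\ge c\,\pi_j$ needed for the logarithm to be controlled. I would build them explicitly by regularizing $v$ through mollification in $a$, and then adding $\ep\pi_j$-type corrections near $a=0$ so that the boundary condition of $D((A+F)_0)$ holds. For the convergence of the $g$-terms I would use dominated convergence with those bounds.

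\textbf{Part 2.} If $L_k$ is constant on $\gamma(z)$, then all the nonpositive terms in \eqref{Eq:Lk_deriv-inf} vanish identically. This gives $\psi^S\equiv S^*_{\sigma_k}$, and $\int\beta_j\psi^j=0$ for $j>\sigma_k$. Combined with the backward Duhamel argument from Lemma \ref{Lemma:L0-constant}, the latter forces those $x_j$ to vanish, and the same holds for the boundary components. For $j\in\J$, $g=0$ gives $\psi^j(t)(a)=c_j(t)\,x^*_{j,\alpha_j}(a)$, so $B_j(t-a)$ is proportional to $B_j(t)$. This forces $B_j$ to be constant in time, i.e.\ $\psi^j(t)=\lambda_jx^*_{j,\alpha_j}$. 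The $S$-equation with $S\equiv S^*_{\sigma_k}$ then gives $\sum_j\lambda_j\alpha_j\cdot\mu_S(\RR_{0,\sigma_k}-1)=\mu_S(\RR_{0,\sigma_k}-1)$. Hence $\psi(t)=E^{*,k}_{\lambda_j\alpha_j}\in\E_{k,\J}$, which is a single equilibrium.
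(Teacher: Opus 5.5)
Your argument is sound in outline, but part 1 takes a different route from the paper.

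\textbf{Part 1: your route versus the paper's.} The paper computes nothing at this point. Since $\gamma(z)\subset\partial\S_1$ forces $x_1\equiv 0$, the functional $L_k^{\alpha_{1+\sigma_{k-1}},\dots,\alpha_{\sigma_k}}$ coincides along the orbit with the corresponding functional of the reduced system with populations $2,\dots,n$. Well-definedness, the derivative formula and its sign are then imported from the induction hypothesis, ultimately from the block-$1$ argument of Section~\ref{Sec:Lyap-functional} applied to a smaller system. You instead carry out the integration by parts and the approximation argument directly for block $k$.

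What your version buys is self-containment, and it makes explicit what is really needed: a bound $x_j\geq c\,\pi_j$ (equivalently $B_j\geq c>0$) along the whole orbit for $j\in\J^*$. The paper's version is shorter but hides the same requirement. Item 2 of Theorem~\ref{Thm:GAS}, used as induction hypothesis, only covers complete orbits inside a compact invariant set. So your extra hypothesis is no stronger than what the paper tacitly uses.

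You also need not assume persistence separately. For a compact invariant $K\subset\bigcap_{j\in\J}\S_j$, the paper obtains the bound from compactness alone: a uniform onset time $\tau_k$, then $\int_0^\infty\beta_kx_k\geq c_k$ by invariance, then \eqref{Eq:cs_cx}. That argument transfers verbatim to block $k$.

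Your mollify-and-correct approximation can be made to work. For example, use affine interpolation on $[0,\delta]$ between the required boundary value, which is bounded below by a positive constant, and the mollified function. The paper's device is more economical:
\begin{itemize}
\item subtract $\nu\pi_k\int_0^\infty\beta_kx_{k,0}/\int_0^\infty\beta_jx_{j,0}$ from each $x_{k,0}$, $k\in\J$, and adjust $S_0$;
\item approximate by domain elements;
\item add the same shift back.
\end{itemize}
Since $r_kS^*_{\sigma_k}=1$ throughout the block, the boundary condition then holds for all $k\in\J$ at once. The added multiple of $\pi_k$ also supplies the lower bound for free.

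\textbf{Three corrections.}

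First, your doubt about $|\J^*|$ is justified. Each $j\in\J^*$ contributes $-S^*_{\sigma_k}\big(\int_0^\infty\beta_jx^*_{j,\alpha_j}\big)\,g(S^*_{\sigma_k}/\Phi^S_t(v))$. So the coefficient of $g(S^*_{\sigma_k}/\Phi_t^S(v))$ is $\mu_SS^*_{\sigma_k}(\RR_{0,\sigma_k}-1)=\Lambda-\mu_SS^*_{\sigma_k}$, not $|\J^*|$. This slip in the statement affects neither the sign nor part 2.

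Second, your displayed intermediate expression for the $g$-term is off. The correct contribution is $x^*_j(0)\,g\big(x_j(t,0)/x^*_j(0)\big)-\frac{1}{r_j}\int_0^\infty\beta_jx^*_j\,g(x_j/x^*_j)\,da$. That is, the logarithm is weighted by $x^*_j(0)$, not $x_j(t,0)$, and the integral enters with a minus sign.

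Third, in part 2 the vanishing of $g$ only yields $\psi^j(t)(a)=c_j(t)x^*_{j,\alpha_j}(a)$ for a.e.\ $a\in[\underline{\beta_j},\overline{\beta_j})$. This interval need not contain $a=0$, so proportionality to $B_j(t)$ is not immediate. What you do get is that the continuous function $B_j$ is constant on each window $[t-\overline{\beta_j},t-\underline{\beta_j}]$. Overlapping windows make $B_j$ constant on $\R$. Then $\psi^j(t)(a)=B_j(t-a)\pi_j(a)$ gives $\psi^j(t)=\lambda_jx^*_{j,\alpha_j}$ for all $a$. This is exactly the paper's Step 1, and the rest of your part 2 coincides with the paper.
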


\begin{proof}
\begin{enumerate}
\item First we suppose that the assumptions of the lemma hold and let $v:=(S_0,x_{1,0},...,x_{n,0})\in\gamma(z)$. Since $\gamma(z)\subset \partial \S_1$ then $\int_0^\infty \Psi_1(a)x_{1,0}(a)da=0$ and $x_{1,0}\equiv 0$. It follows that $L_k^{\alpha_{1+\sigma_{k-1}},...,\alpha_{\sigma_k}}(v)=\tilde{L}_k^{\alpha_{1+\sigma_{k-1}},...,\alpha_{\sigma_k}}(\tilde{v})$
for $\tilde{v}:=(S_0,x_{2,0},...,x_{n,0})$ where
\begin{flalign*}
\tilde{L}_k^{\alpha_{1+\sigma_{k-1}},...,\alpha_{\sigma_k}}(\tilde{v})=&S_{\sigma_k}^*g\left(\dfrac{S_0}{S_{\sigma_k}^*}\right)+\sum_{j=2}^n \int_0^\infty \eta_j\Psi_j(a)x_{j,0}(a)da \\
&+\sum_{j=2}^n\int_0^\infty (1-\eta_j) \Psi_j(a) x^*_{j,\alpha_j}(a)g\left(\dfrac{x_{j,0}(a)}{x^*_{j,\alpha_j}(a)}\right)da
\end{flalign*}
and with
$$\eta_j=\begin{cases}
1 & \text{ if } j\in\llbracket 2, \sigma_{k-1}\rrbracket\cup \llbracket 1+\sigma_k,n\rrbracket \ \text{ or if } \alpha_j=0 \\
0 & \text{else.}
\end{cases}$$
We define $\tilde{\Phi}:(t,\tilde{v})\longmapsto (\Phi_t^S(v),\Phi_t^2(v),...,\Phi_t^n(v))$ that is the solution of the reduced problem \eqref{Eq:model} with the infected populations $j\in\llbracket 2,n\rrbracket$ and initial condition $\tilde{v}$. By recurrence hypothesis the function
$$t\longmapsto \tilde{L}_k^{\alpha_{1+\sigma_{k-1}},...,\alpha_{\sigma_k}}(\tilde{\Phi}_t(\tilde{v}))$$
is well-defined on $\R_+$, continuous and satisfy
\begin{equation*}
    \begin{array}{rll}
       &\dfrac{dL_k^{\alpha_{1+\sigma_{k-1}},...,\alpha_{\sigma_k}}(\Phi_{t}(\tilde{v}))}{dt}=\displaystyle -\sum_{j\in\J^*}S^*_{\sigma_k}\int_0^\infty \beta_j(a)x^*_{j,\alpha_j}(a)g\left(\dfrac{\tilde{\Phi}_t^j(\tilde{v})(a)\int_0^\infty \beta_j(s)x_{j,\alpha_j}^*(s)ds}{x_{j,\alpha_j}^*(a)\int_0^\infty \beta_j(s)\tilde{\Phi}_t^j(\tilde{v})(s)ds}\right)da \\
       &\qquad -|\J^*| \displaystyle g\left(\dfrac{S^*_{\sigma_k}}{\tilde{\Phi}_t^S(\tilde{v})}\right)
       -\sum_{j=1+\sigma_k}^n  \int_0^\infty \beta_j(a)\tilde{\Phi}_t^j(\tilde{v})(a)\left(\dfrac{1}{r_j}-S^*_{\sigma_k}\right)da-\dfrac{\mu_S}{\tilde{\Phi}_t^S(\tilde{v})}(\tilde{\Phi}_t^S(\tilde{v})-S^*_{\sigma_k})^2
    \end{array}
\end{equation*}
which is non-positive since $\frac{1}{r_j}-S^*_{\sigma_k}=\frac{1}{r_j}-\frac{1}{r_{\sigma_k}}=\frac{\Lambda}{\mu_S}(\frac{1}{\RR_{0,j}}-\frac{1}{\RR_{0,\sigma_k}})>0$ for every $j\in\llbracket 1+\sigma_k, n\rrbracket$. It naturally follows that the first point is true.

\item \textbf{Step 1.} First we know that $\Phi^j_t(\phi(s))(a)=0$ for every $j\in\llbracket 1,\sigma_{k-1}\rrbracket \cup (\llbracket 1+\sigma_{k-1},\sigma_k\rrbracket\setminus \J)$ and every $(s,t,a)\in\R\times (\R_+)^2$ since $\gamma(z)\subset \left(\bigcap_{j=1}^{\sigma_{k-1}} \partial \S_j\right)\cap \left(\bigcap_{j\in \llbracket 1+\sigma_{k-1},\sigma_k\rrbracket\setminus \J} \partial \S_j\right)$ and in particular $x_{j,0}\equiv 0$. Suppose now that $L_k^{\alpha_{1+\sigma_{k-1}},...,\alpha_{\sigma_k}}$ is constant on $\gamma(v)$. It implies that
$$\dfrac{d}{dt}L_k^{\alpha_{1+\sigma_{k-1}},...,\alpha_{\sigma_k}}(\Phi_t(\phi(s)))=0, \quad \forall t>0, \quad \forall s\in\R.$$
From \eqref{Eq:Lk_deriv-inf} we get $\Phi_t^S(\phi(s))=S^*_{\sigma_k}$ for every $(t,s)\in\R_+\times \R$ (in particular $S_0=S^*_{\sigma_k}$). Using \eqref{Eq:Lk_deriv-inf} and Assumption \ref{Assum:1} we deduce that
$$\forall (j,t)\in\J^*\times \R_+, \quad \exists \ c_j(t)>0: \quad 
\Phi_t^j(\phi(s))(a)=c_j(t) x^*_{j,\alpha_j}(a), \quad  \forall s\in \R \ \text{ a.e. } a\in[\underline{\beta_j},\overline{\beta_j}).$$
Let $(j,s,a)\in\J^*\times\R\times[\underline{\beta_j},\overline{\beta_j})$.
Using the fact that $\gamma(z)$ is a complete orbit we have
$$c_j(t)x^*_{j,\alpha_j}(a)=\Phi_t^j(\phi(s))(a)=\Phi_{\tilde{t}}^j(\phi(s+t-\tilde{t}))(a)=c_j(\tilde{t})x^*_{j,\alpha_j}(a), \quad \forall (t,\tilde{t})\in (\R_+)^2 \ \text{ a.e. }  a\in[\underline{\beta_j},\overline{\beta_j}).$$
whence $c_j$ does not depend on $t$ and we have
$$\forall j\in\J^*, \quad \exists \ c_j>0: \quad 
\Phi_t^j(\phi(s))(a)=c_jx^*_{j,\alpha_j}(a), \quad  \forall (s,t)\in \R\times \R_+ \ \text{ a.e. } a\in[\underline{\beta_j},\overline{\beta_j}).$$
From \eqref{Eq:Duhamel_phik} and from the latter equation with $t=a$ we see that
$$c_jx_{j,\alpha_j}^*(a)=\Phi^j_a(\phi(s))(a)=\Phi_{\xi}^j(\phi(s))(\xi)e^{-\int_{\xi}^a\mu_j(z)dz}, \quad \forall \xi\in[0,a]$$
and
$$\Phi_{\xi}^j(\phi(s))(\xi)=\Phi^j_a(\phi(s))(a)e^{-\int_a^{\xi}\mu_j(z)dz}=c_jx^*_{j,\alpha_j}(a)e^{-\int_a^{\xi}\mu_j(z)dz}=c_jx^*_{j,\alpha_j}(\xi), \quad \forall \xi\geq a.$$
These two equations lead to
$$\forall (j,s,a)\in \J^*\times \R\times \R_+: \quad \Phi^j_a(\phi(s))(a)=c_jx^*_{j,\alpha_j}(a).$$
Let $t\geq 0$. Since $\gamma(z)$ is a complete orbit then we have
$$\Phi_t^j(\phi(s))(a)=\Phi^j_a(\phi(t+s-a))(a)=c_jx^*_{j,\alpha_j}(a)=x^*_{j,c_j\alpha_j}(a)$$
and this latter formula is true for every $(j,s,t,a)\in\J^*\times\R\times(\R_+)^2$. In particular $x_{j,0}=x^*_{j,c_j\alpha_j}$ for each $j\in\J^*$. Now let $j\in\llbracket 1+\sigma_k,n\rrbracket$. From \eqref{Eq:Lk_deriv-inf} we get
$$\int_0^\infty \beta_j(a)\Phi_t^j(\phi(s))(a)da=0$$
for every $(s,t)\in\R\times \R_+$, whence $\Phi_t^j(\phi(s))(a)=0$ a.e. $a\in[\underline{\beta_j},\overline{\beta_j})$. As before, letting $t=a$ and using \eqref{Eq:Duhamel_phik} we get
$$0=\Phi_a^j(\phi(s))(a)=\Phi_{\xi}^j(\phi(s))(\xi)e^{-\int_{\xi}^a \mu_j(z)dz}, \quad \forall \xi\in[0,a]$$
and 
$$\Phi_{\xi}^j(\phi(s))(\xi)=\Phi_a^j(\phi(s))(a)e^{-\int_a^{\xi}\mu_j(z)dz}=0, \quad \forall \xi\geq a$$
whence $\Phi_a^j(\phi(s))(a)=0$ for every $(j,a,s)\in \llbracket 1+\sigma_k,n\rrbracket\times \R_+\times \R$. Let $t\geq 0$, since $\gamma(z)$ is a complete orbit then
$$\Phi_t^j(\phi(s))(a)=\Phi^j_a(\phi(t+s-a))(a)=0$$
and this latter formula is true for every $(j,s,t,a)\in \llbracket 1+\sigma_k,n\rrbracket\times \R\times (\R_+)^2$. In particular $x_{j,0}\equiv 0$.

\textbf{Step 2.} By the first step, looking at the $S$-equation of \eqref{Eq:model} we get
$$\displaystyle\sum_{j\in\J^*}\int_0^\infty \beta_j(a)x^*_{j,c_j\alpha_j}(a)da=\sum_{j\in\J^*}\int_0^\infty \beta_j(a)\Phi_t^j(\phi(s))(a)da=\dfrac{\Lambda}{S^*_{\sigma_k}}-\mu_S=\mu_S(\RR_{0,k}-1), \quad \forall (t,s)\in \R_+^*\times \R$$
which is equivalent to 
$$\sum_{j\in\J^*}c_j\mu_S(\RR_{0,j}-1)\alpha_j=\mu_S(\RR_{0,k}-1)$$
but since $\RR_{0,j}=\RR_{0,k}$ for every $j\in\J^*$ then we get the condition
$$\sum_{j\in\J^*}c_j\alpha_j=1$$
which implies $\phi(t)\in \E_{k,\J^*}=\E_{k,\J}$ for every $t\in \R$ and this ends the proof.
\end{enumerate}
\end{proof}

Now we prove that $\Phi$ has a global attractor $\A_0\subset \A$ in $\cup_{k=1}^{\sigma_1} \S_k$. We proceed in several stages. Let $\p:X_+\to \R_+$ be the function defined by:
$$\p(z)=\sum_{k=1}^{\sigma_1}\int_0^{\overline{\beta_k}}x_{k,0}(s)ds$$
for every $z=(S_0,x_{1,0},...,x_{n,0})\in X_+$. Let the subsets
$$\S_0=\{z\in X_+: \p(z)>0\}=\bigcup_{k=1}^{\sigma_1} \S_k, \qquad \partial \S_0=\{z\in \X_+: \p(z)=0\}=\bigcap_{k=1}^{\sigma_1} \partial \S_k.$$
We also define the set
$$\Omega:=\bigcup_{z\in \partial \S_0}\omega(z)=\{E_0\}\cup \left(\bigcup_{k=2}^{n_{>}} \E_k\right)$$
by recurrence hypothesis. We have the following properties:
\begin{enumerate}
    \item The set $\{E_0\}$ and each subset $\E_k$ ($k\in\llbracket 2,n_{>}\rrbracket$) are invariant and disjoints. Also, $\{E_0\}$ is clearly compact while $\E_k$ is compact as the image of the compact set
    $$\{(\alpha_1,...,\alpha_n)\in[0,1]^n: \sum_{j=1+\sigma_{k-1}}^{\sigma_k}\alpha_j=1, \alpha_j=0 \ \forall j\in\llbracket 1,\sigma_{k-1}\rrbracket \cup \llbracket 1+\sigma_k,n\rrbracket\}$$
    by the continuous function
    $$(\alpha_1,...,\alpha_n)\longmapsto E^{*,k}_{\alpha_1,...,\alpha_n}.$$
    
    
    
    \item $\textbf{(i)}$ Let $\ep>0$ such that 
    \begin{equation}\label{Eq:eps_M}
    \left(\dfrac{\Lambda}{\mu_S}-\ep\right)r_1>1
    \end{equation}
    Following the proof of \cite[Proposition 4.4.8.(a)]{Richard2020}, we show that
    \begin{equation}\label{Eq:M_eps}
   \forall z\in \M_{\ep}:=\{\overline{z}\in \S_0, \|\overline{z}-E_0\|_{X}\leq \ep\}, \quad \exists \ \overline{t}>0: \|\Phi_{\overline{t}}(z)-E_0\|_{X}>\ep
    \end{equation}
    which implies that
    $$\{\overline{z}\in X_+, \p(\overline{z})>0: \lim_{t\to +\infty}\Phi_t(\overline{z})=E_0\}=\emptyset$$
    meaning that the semiflow $\Phi$ is weakly persistent (we note that in \cite{Richard2020}, the equation \eqref{Eq:M_eps} was proved for $\overline{z}$ in the intersection, but the same proof works for the reunion).
    Indeed, suppose by contradiction that there exists $z:=(S_0,x_{1,0},...,x_{n,0})\in \S_0$ such that
    $$\|\Phi_t(z)-E_0\|_{X}\leq \ep, \ \forall t\geq 0$$
    then
    $$\Phi_t^S(z)\geq \dfrac{\Lambda}{\mu_S}-\ep, \ \forall t\geq 0.$$
    Let $j\in\llbracket1,\sigma_1\rrbracket$ such that $z\in \S_{j}$, then by continuity arguments there exists $c\in(\underline{\beta_{j}},\infty)$ such that
    $$\left(\dfrac{\Lambda}{\mu_S}-\ep\right) \int_0^c \beta_{j}(a)\pi_{j}(a)da>1.$$
    Also, using \cite[Proposition 4.4.4]{Richard2020} we know that there exists $\tau \geq 0$ such that
    $$\int_0^\infty \beta_{j}(a)\Phi_t^{j}(z)(a)da>0$$
    for every $t\geq \tau$. Thus we arrive at
    \begin{equation*}
        \left\{
        \begin{array}{rcl}
        \dfrac{\partial \Phi_t^j(z)(a)} {\partial t}+\dfrac{\partial \Phi_t^j(z)(a)} {\partial a}&=&-\mu_{j}(a)\Phi_t^j(z)(a), \\
        \Phi_t^j(z)(0)&\geq& \left(\dfrac{\Lambda}{\mu_S}-\ep\right)\displaystyle\int_0^c \beta_{j}(a)\Phi_t^j(z)(a)da
        \end{array}
        \right.
    \end{equation*}
    for every $t\geq \overline{\tau}>\tau$. It follows that $\Phi_{\overline{\tau}}^{j}(z)\in L^1_+(0,c)\setminus \{0\}$ since 
    $$\Phi_{\overline{\tau}}^{j}(z)(a)=\left(\int_0^\infty \beta_{j}(\xi)\Phi^{j}_{\overline{\tau}-a}(z)(\xi)d\xi\right)e^{-\int_0^a \mu_{j}(s)ds}>0, \ \forall a\in[0,\overline{\tau}-\tau]$$
    and then $\lim_{t\to \infty}\int_0^c \Phi_t^j(z)(a)da=\infty$ (see \textit{e.g.} \cite[Lemma 4.3]{Richard2020}) by using the comparison theorem \cite[Proposition 5.2]{MagalSeydi2019} which is absurd, hence \eqref{Eq:M_eps} is true.
    
    $\textbf{(ii)}$ Now, let $k\in \llbracket 2,n_{>}\rrbracket$ (hence we suppose that $n_>\geq 2$). Let $\ep>0$ such that
    \begin{equation}\label{Eq:eps_Mk}
    \left(\dfrac{1}{r_{\sigma_k}}-\ep\right)r_1>1.
    \end{equation}
    As the first point, we can prove that
    \begin{equation}\label{Eq:Mk_eps}
   \forall z\in \M^k_{\ep}:=\{\overline{z}\in \S_0, \|\overline{z}-\E_k\|_{X}\leq \ep\}, \quad \exists \ \overline{t}>0: \|\Phi_{\overline{t}}(z)-\E_k\|_{X}>\ep.
    \end{equation}
    Indeed, suppose by contradiction that there exists $z:=(S_0,x_{1,0},...,x_{n,0})\in \S_0$ such that
    $$\Phi_t^S(z)\geq S^*_{\sigma_k}-\ep=\dfrac{1}{r_{\sigma_k}}-\ep, \ \forall t\geq 0.$$
    Let $j\in\llbracket1,\sigma_1\rrbracket$ such that $z\in \S_{j}$ then by continuity arguments (since $r_j=r_1$ because $\RR_{0,j}=\RR_{0,1}$) there exists $c\in(\underline{\beta_{j}},\infty)$ such that
    $$\left(\dfrac{1}{r_k}-\ep\right) \int_0^c \beta_{j}(a)\pi_{j}(a)da>1$$
    and the same arguments as before lead to $\lim_{t\to \infty}\int_0^c \Phi_t^j(z)(a)da=\infty$ which is absurd, hence \eqref{Eq:Mk_eps} is true.
    
    
    \item $\textbf{(i)}$ We prove that $\{E_0\}$ is isolated \textit{i.e.} the maximal compact invariant set of a neighborhood of itself (this neighborhood being called an isolating neighborhood). Indeed, let $\ep>0$ be small enough such that \eqref{Eq:eps_M} is true then following the arguments of \cite[Proposition 2.4]{MagCluskWebb2010} (see also \cite{HaleWaltman89}), we can prove that the ball centered at $E_0$ and radius $\ep$, denoted by $B_{X}(E_0,\ep)$, is an isolating neighborhood of $E_0$. To prove it we let $K\subset B_{X}(E_0,\ep)$ be a compact invariant subset of $B_{X}(E_0,\ep)$ and let $z\in K$. We need to prove that $K=\{E_0\}$. If $z\in \M_{\ep}$ then clearly, from \eqref{Eq:M_eps}, it follows that $K$ is not invariant, whence $z\not\in \M_{\ep}$ \textit{i.e.} $z\in \partial \S_0$. First we prove that
    \begin{equation}\label{Eq:K_subset}
    K\subset \bigcap_{k\in\llbracket 1,\sigma_{n_>}\rrbracket}\partial \S_k.
    \end{equation}
    If $n_>=1$, then \eqref{Eq:K_subset} is true by definition of $\partial \S_0$. Suppose now that $n_>\geq 2$, that $z\in\cup_{k\in\llbracket 1+\sigma_1,\sigma_{n_>}\rrbracket}\S_k$ and also that
    $$\ep<S_0^*-\max_{k\in\llbracket 1,\sigma_{n_>}\rrbracket}\left\{\dfrac{1}{r_{\sigma_k}}\right\}=\dfrac{\Lambda}{\mu_S}\left(1-\dfrac{1}{\min_{k\in\llbracket 1,\sigma_{n_>}\rrbracket}\{\RR_{0,\sigma_k}\}}\right).$$
    Let $k=\min\{j\in\llbracket 2,n_>\rrbracket: z\in \cup_{s\in\llbracket 1+\sigma_{j-1},\sigma_j\rrbracket} \S_s\}$. By invariance of $K$ and since $z\in \partial \S_0\subset \partial \S_1$, then denoting $z=(S_0,x_{1,0},...,x_{n,0})$ we necessarily have $x_{1,0}\equiv 0$ and $\Phi_t^1(z)=0$ for every $t\geq 0$. It follows that the function $t\longmapsto (\Phi^S_t(z),\Phi_t^2(z),...,\Phi_t^n(z))$ is solution of the reduced problem \eqref{Eq:model} with the infected populations $k\in\llbracket 2,n\rrbracket$. By recurrence hypothesis we deduce that $\Phi_t(z)\underset{t\to \infty}{\longrightarrow} \E_k$ hence $\Phi_t^S(z)\underset{t\to \infty}{\longrightarrow} \frac{1}{r_{\sigma_k}}$. This is absurd since $S_0^*-\frac{1}{r_{\sigma_k}}>\ep$ (thus $\Phi_t(z)\not\in B_X(E_0,\ep)$ for $t$ large enough and consequently $K$ is not invariant). It follows that \eqref{Eq:K_subset} is true.
    
     Since $K$ is invariant, there exists a complete orbit $\gamma(z)=\{\phi(t),t\in \R\}$ through $z$. Using Lemma \ref{Lemma:L0} and \eqref{Eq:K_subset} we deduce that $L_0$ is a Lyapunov functional on $\gamma(z)$. As in the case $\RR_{0,1}\leq 1$ we deduce first that $L_0$ is constant on $\alpha(\phi)$ and by using Lemma \ref{Lemma:L0-constant} that $\alpha(\phi)=\{E_0\}$. Secondly, we deduce that $L_0$ is constant on $\gamma(z)$ which leads to $z=\{E_0\}$ by Lemma \ref{Lemma:L0-constant}. Finally $K=\{E_0\}$ and is isolated.

$\textbf{(ii)}$ Suppose that $n_>\geq 2$ and let $k\in \llbracket 2,n_{>}\rrbracket$. We prove that $\E_k$ is isolated. Let $\ep>0$ be small enough such that \eqref{Eq:eps_Mk} is satisfied and such that
$$S_0^*-\frac{1}{r_k}>\ep, \qquad \min_{j\in\llbracket 1,n_{>}\rrbracket\setminus{\{k\}}}\left\{\left|\frac{1}{r_j}-\frac{1}{r_k}\right|\right\}>\ep$$

Let $K\subset B_{X}(\E_k,\ep)$ be a compact invariant subset and let $z\in K$. If $z\in \M_{\ep}^k$ then from \eqref{Eq:Mk_eps} it follows that $K$ is not invariant, whence $z\not\in \M^k_{\ep}$ \textit{i.e.} $z\in \partial \S_0$. 

\underline{Step 1.} We prove that
\begin{equation}\label{Eq:subset_inter}
z\in \left(\bigcap_{j=1}^{\sigma_{k-1}} \partial \S_j\right)\cap \left(\bigcup_{j=1+\sigma_{k-1}}^{\sigma_k}\S_j\right).
\end{equation}
We already know that $z\in \partial \S_0$. If $z\in \cup_{j\in\llbracket 1+\sigma_1,\sigma_{k-1}\rrbracket} \S_j$ then by recurrence hypothesis we would have $\Phi_t(z)\underset{t\to \infty}{\longrightarrow} \E_{j^*}$ for a $j^*\in\llbracket 2,k-1\rrbracket$. Thus $\Phi_t^S(z)\underset{t\to \infty}{\longrightarrow} \frac{1}{r_{j^*}}$ which is absurd since $|\frac{1}{r_{j^*}}-\frac{1}{r_k}|>\ep$. Suppose now that $z\in \cap_{j\in\llbracket 1+\sigma_{k-1},\sigma_k\rrbracket} \partial \S_j$, then either $\Phi_t(z)\underset{t\to \infty}{\longrightarrow} \E_{j^*}$ for $j^*\in \llbracket k+1,n_>\rrbracket$ which is absurd with the arguments above, or $\Phi_t(z)\underset{t\to \infty}{\longrightarrow} E_0$ which implies $\Phi_t^S(z)\underset{t\to \infty}{\longrightarrow} S_0^*$ and this is also absurd since $S_0^*-\frac{1}{r_k}>\ep$. It follows that \eqref{Eq:subset_inter} is satisfied.

\underline{Step 2.} Let $\J\subset \llbracket 1+\sigma_{k-1},\sigma_k\rrbracket$ be the (non-empty) subset such that
$$z\in \left(\bigcap_{j\in \J} \S_j\right)\cap \left(\bigcap_{j\in \llbracket 1+\sigma_{k-1},\sigma_k\rrbracket\setminus \J} \partial \S_j\right).$$
Since $K$ is invariant, there exists a complete orbit $\gamma(z)=\{\phi(t), t\in\R\}$ through $z$. Still by invariance we have $\phi(t)\in \cap_{j\in \J} \S_j$ and $\phi(t)\in \cap_{j\in \llbracket 1,\sigma_k\rrbracket\setminus \J} \partial \S_j$ for every $t\in \R$.
Let $(\alpha_{1+\sigma_{k-1}},...,\alpha_{\sigma_k})\in[0,1]^{\sigma_k-\sigma_{k-1}}$ such that $\sum_{j=1+\sigma_{k-1}}^{\sigma_k}\alpha_j=1$, with $\alpha_j=0$ for every $j\in\llbracket 1+\sigma_{k-1},\sigma_k\rrbracket \setminus \J$ and with $\alpha_j>0$ for every $j\in\J$ (for example $\alpha_j=\frac{1}{|\J|}$ for $j\in\J$). We know by Lemma \ref{Lemma:L_k-Lyap} that $L_k^{\alpha_{1+\sigma_{k-1}},...,\alpha_{\sigma_k}}$ defined by \eqref{Eq:Lk} is a Lyapunov functional on $\gamma(z)$. A consequence of \cite[Proposition 2.51, p. 53]{SmithThieme2011} is that $L_k^{\alpha_{1+\sigma_{k-1}},...,\alpha_{\sigma_k}}$ is constant on $\alpha(\phi)$ and on $\omega(z)$. Following the arguments used in the case $\RR_{0,1}\leq 1$ we deduce that $\alpha(\phi)$ is nonempty, compact, invariant and $\lim_{t\to -\infty}d(\phi(t),\alpha(\phi))=0$. Since $L_k^{\alpha_{1+\sigma_{k-1}},...,\alpha_{\sigma_k}}$ is constant on every complete orbit $\gamma(v)\subset \alpha(\phi)$ then we can use Lemma \ref{Lemma:L_k-Lyap} to obtain $\alpha(\phi)\subset \E_{k,\J}$. Similarly we get $\omega(z)\subset \E_{k,\J}$ and then
$$\lim_{t\to -\infty}d(\phi(t),\E_{k,\J})=\lim_{t\to +\infty}d(\phi(t),\E_{k,\J})=0.$$

\underline{Step 3.} We prove now that there exists $(E^{*,k}_{\tau_1,...,\tau_n},E^{*,k}_{\omega_1,...,\omega_n})\in (\E_{k,\J})^2$ such that:
\begin{equation}\label{Eq:Phi_limit}
\lim_{t\to -\infty}\phi(t)=E^{*,k}_{\tau_1,...,\tau_n} \qquad \lim_{t\to +\infty}\phi(t)=E^{*,k}_{\omega_1,...,\omega_n}.
\end{equation}
Suppose by contradiction that there exist $(E^{*,k}_{\tau_1,...,\tau_n},E^{*,k}_{\tau'_1,...,\tau'_n})\in (\alpha(\phi))^2$ such that $(\tau_1,...,\tau_n)\neq (\tau'_1,...,\tau'_n)$. By definition of $\E_{k,\J}$ we have 
$$\tau_j=\tau'_j=0 \quad \forall j\in\llbracket 1,n\rrbracket\setminus \J$$
thus we arrive at the assumption
$$(\tau_{1+\sigma_{k-1}},...,\tau_{\sigma_k})\neq (\tau'_{1+\sigma_{k-1}},...,\tau'_{\sigma_k}).$$

Firstly we prove that for every $j\in\J$ we have $\tau_j=0 \Longleftrightarrow \tau'_j=0$. By contradiction suppose that there exists $j\in\J$ such that $\tau_j=0$ and $\tau'_j\neq 0$. Then we use the functional $L_k^{\alpha_{1+\sigma_{k-1}},...,\alpha_{\sigma_k}}$ defined by \eqref{Eq:Lk} where $\alpha_j=1$ and $\alpha_s=0$ for every $s\in\llbracket 1+\sigma_{k-1},\sigma_k\rrbracket\setminus \{j\}$. Using Lemma \ref{Lemma:L_k-Lyap} this is a Lyapunov functional on $\gamma(z)$, and as explained in Step 2, it should be constant on $\alpha(\phi)$ implying that
$$L_k^{\alpha_{1+\sigma_{k-1}},...,\alpha_{\sigma_k}}\left(E^{*,k}_{\tau_1,...,\tau_n}\right)=L_k^{\alpha_{1+\sigma_{k-1}},...,\alpha_{\sigma_k}}\left(E^{*,k}_{\tau'_1,...,\tau'_n}\right)$$
since $(E^{*,k}_{\tau_1,...,\tau_n},E^{*,k}_{\tau'_1,...,\tau'_n})\in (\alpha(\phi))^2$. On the other hand we have
$$L_k^{\alpha_{1+\sigma_{k-1}},...,\alpha_{\sigma_k}}(E^{*,k}_{\tau_1,...,\tau_n})=
\int_0^\infty \Psi_j(a)\dfrac{\mu_S}{r_j}\left(\RR_{0,j}-1\right)\pi_j(a)g(\tau_j)da+\hspace{-0.1cm}\sum_{s\in\J\setminus\{j\}} \int_0^\infty \Psi_s(a)x^*_{s,\tau_s}(a)da=\infty$$
and
$$L_k^{\alpha_{1+\sigma_{k-1}},...,\alpha_{\sigma_k}}(E^{*,k}_{\tau'_1,...,\tau'_n})=
\int_0^\infty \Psi_j(a)\dfrac{\mu_S}{r_j}\left(\RR_{0,j}-1\right)\pi_j(a)g(\tau'_j)da+\hspace{-0.1cm}\sum_{s\in\J\setminus\{j\}} \int_0^\infty \Psi_s(a)x^*_{s,\tau'_s}(a)da<\infty$$
which is absurd (the same proof works for $\tau_j\neq 0$ and $\tau'_j=0$).

Secondly we prove that for every $j\in \J$ we have $\tau_j=\tau'_j$. For this, we decompose the set $\J$ as $\J=\J_=\sqcup \J_> \sqcup \J_<$ where
$$\J_=:=\{j\in \J: \tau_j=\tau'_j\}, \qquad \J_{>}:=\{j\in \J: \tau_j>\tau'_j\}, \qquad \J_{<}:=\{j\in \J: \tau_j<\tau'_j\}.$$
We see that
$$\J_=\neq \J,\quad \J_>\neq \J, \quad \J_<\neq \J,  \quad \J_>\neq \emptyset,  \quad \J_<\neq \emptyset$$
since $(\tau_1,...,\tau_n)\neq (\tau'_1,...,\tau'_n)$ and $\sum_{j\in \J} \tau_j=\sum_{j\in \J}\tau'_j=1$. We use the functional $L_k^{\alpha_{1+\sigma_{k-1}},...,\alpha_{\sigma_k}}$ defined by \eqref{Eq:Lk} where for every $j\in\llbracket 1+\sigma_{k-1},\sigma_k\rrbracket$ we put
$$\alpha_j=\begin{cases}
\dfrac{\tau_j}{\sum_{s\in \J_>}\tau_s} & \text{ if } j\in \J_> \\
0 & \text{ otherwise}.
\end{cases}$$
so that $\sum_{j\in \J_>}\alpha_j=1$. Then we compute
\begin{flalign*}
L_k^{\alpha_{1+\sigma_{k-1}},...,\alpha_{\sigma_k}}\left(E^{*,k}_{\tau_1,...,\tau_n}\right)=&
\sum_{j\in \J_>}\int_0^\infty \Psi_j(a)\dfrac{\mu_S}{r_j}\left(\RR_{0,j}-1\right)\alpha_j\pi_j(a)g\left(\dfrac{\tau_j}{\alpha_j}\right)da \\
&+\sum_{j\in\J\setminus \J_>} \int_0^\infty \Psi_j(a)x^*_{j,\tau_j}(a)da
\end{flalign*}
and
\begin{flalign*}
L_k^{\alpha_{1+\sigma_{k-1}},...,\alpha_{\sigma_k}}\left(E^{*,k}_{\tau'_1,...,\tau'_n}\right)=&
\sum_{j\in \J_>}\int_0^\infty \Psi_j(a)\dfrac{\mu_S}{r_j}\left(\RR_{0,j}-1\right)\alpha_j\pi_j(a)g\left(\dfrac{\tau'_j}{\alpha_j}\right)da\\
&+\sum_{j\in\J \setminus \J_>} \int_0^\infty \Psi_j(a)x^*_{j,\tau'_j}(a)da.
\end{flalign*}
On one hand we have $\tau_j>\tau'_j$ (and necessarily $\tau'_j>0$ by the first point) for each $j\in\J_>$ leading to
$$g\left(\dfrac{\tau_j}{\alpha_j}\right)=g\left(\sum_{s\in \J_>}\tau_s\right)<g\left(\dfrac{\tau'_j}{\tau_j}\sum_{s\in \J_>}\tau_s\right)=g\left(\dfrac{\tau'_j}{\alpha_j}\right)$$
since $g$ is decreasing on $(0,1]$ and by using the following inequality:
$$1\geq \sum_{s\in \J_>}\tau_s>\dfrac{\tau'_j}{\tau_j}\sum_{s\in\J_>}\tau_s\geq 0.$$
On the other hand we see that
$$\int_0^\infty \Psi_j(a)x^*_{j,\tau_j}(a)da\leq \int_0^\infty \Psi_j(a)x^*_{j,\tau'_j}(a)da$$
for every $j\in\J\setminus \J_>=\J_=\sqcup \J_<$. It follows that 
$$L_k^{\alpha_{1+\sigma_{k-1}},...,\alpha_{\sigma_k}}\left(E^{*,k}_{\tau_1,...,\tau_n}\right)<L_k^{\alpha_{1+\sigma_{k-1}},...,\alpha_{\sigma_k}}\left(E^{*,k}_{\tau'_1,...,\tau'_n}\right)$$
which is absurd since $L_k^{\alpha_{1+\sigma_{k-1}},...,\alpha_{\sigma_k}}$ is a Lyapunov functional on $\gamma(z)$ by Lemma \ref{Lemma:L_k-Lyap} and as explained before it should be constant on $\alpha(\phi)$ that is to say
$$L_k^{\alpha_{1+\sigma_{k-1}},...,\alpha_{\sigma_k}}\left(E^{*,k}_{\tau_1,...,\tau_n}\right)=L_k^{\alpha_{1+\sigma_{k-1}},...,\alpha_{\sigma_k}}\left(E^{*,k}_{\tau'_1,...,\tau'_n}\right).$$

We deduce from this that $\alpha(\phi)$ is reduced to a single equilibrium, that is:
$$\alpha(\phi)\cap \E_{k,\J}=\left\{E^{*,k}_{\tau_1,...,\tau_n}\right\}$$
Similarly, we may prove that 
$$\omega(z)\cap \E_{k,\J}=\left\{E^{*,k}_{\omega_1,...,\omega_n}\right\}.$$
Thus \eqref{Eq:Phi_limit} is satisfied and the complete orbit $\gamma(z)$ is either a heteroclinic or a homoclinic orbit.

\underline{Step 4}. We now prove that 
\begin{equation}\label{Eq:Omega=tau}
E^{*,k}_{\tau_1,...,\tau_n}=E^{*,k}_{\omega_1,...,\omega_n}
\end{equation}
which amounts to prove that $\tau_j=\omega_j$ for every $j\in \J$. Similarly to the third step, we define the sets
$$\J_>:=\{j\in \J: \tau_j>\omega_j\}, \qquad \J_<:=\{j\in \J: \tau_j<\omega_j\}.$$
Suppose that $\J_>\neq \emptyset$. We use the functional $L_k^{\alpha_{1+\sigma_{k-1}},...,\alpha_{\sigma_k}}$ defined by \eqref{Eq:Lk} where for every $j\in\llbracket 1+\sigma_{k-1},\sigma_k\rrbracket$ we put
$$\alpha_j=\begin{cases}
\dfrac{\tau_j}{\sum_{s\in \J_>}\tau_s} & \text{ if } j\in \J_> \\
0 & \text{ otherwise}.
\end{cases}$$
so that $\sum_{j\in \J_>}\alpha_j=1$ and $\alpha_j>0$ for every $j\in\J_>$ since $\tau_j>\omega_j\geq 0$.
The same computations as in Step 3 lead to
$$L_k^{\alpha_{1+\sigma_{k-1}},...,\alpha_{\sigma_k}}\left(E^{*,k}_{\tau_1,...,\tau_n}\right)<L_k^{\alpha_{1+\sigma_{k-1}},...,\alpha_{\sigma_k}}\left(E^{*,k}_{\omega_1,...,\omega_n}\right)$$
with the right term being equal to $+\infty$ if $\omega_j=0$. This is absurd since $L_k^{\alpha_{1+\sigma_{k-1}},...,\alpha_{\sigma_k}}$ is a Lyapunov functional on the complete orbit $\gamma(z)$ by using Lemma \ref{Lemma:L_k-Lyap} whence we should have 
$$L_k^{\alpha_{1+\sigma_{k-1}},...,\alpha_{\sigma_k}}\left(E^{*,k}_{\tau_1,...,\tau_n}\right)\geq L_k^{\alpha_{1+\sigma_{k-1}},...,\alpha_{\sigma_k}}\left(E^{*,k}_{\omega_1,...,\omega_n}\right).$$
It follows that $\J_>=\emptyset$ whence $\tau_j\leq \omega_j$ for each $j\in \J$. If $\J_<\neq \emptyset$ then we would have
$$1=\sum_{j\in \J}\tau_j<\sum_{j\in \J}\omega_j=1$$
which is absurd. Consequently $\J_<=\emptyset$ and then $\tau_j=\omega_j$ for each $j\in\J$ which amounts to \eqref{Eq:Omega=tau}.

\underline{Step 5.} Finally we end up with $\gamma(z)=\{\phi(t), t\in \R\}$ being a homoclinic orbit through $z\in K$ with
\begin{equation*}
\lim_{t\to -\infty}\phi(t)=E^{*,k}_{\tau_1,...,\tau_n}=\lim_{t\to +\infty}\phi(t).
\end{equation*}
As in Step 2, let $(\alpha_{1+\sigma_{k-1}},...,\alpha_{\sigma_k})\in[0,1]^{\sigma_k-\sigma_{k-1}}$ such that $\sum_{j=1+\sigma_{k-1}}^{\sigma_k}\alpha_j=1$, with $\alpha_j=0$ for every $j\in\llbracket 1+\sigma_{k-1},\sigma_k\rrbracket \setminus \J$ and with $\alpha_j>0$ for every $j\in\J$. From Lemma \ref{Lemma:L_k-Lyap} we know that $L_k^{\alpha_{1+\sigma_{k-1}},...,\alpha_{\sigma_k}}$ is a Lyapunov functional on $\gamma(z)$ that is an homoclinic orbit (see the latter equation). It implies that $L_k^{\alpha_{1+\sigma_{k-1}},...,\alpha_{\sigma_k}}$ is constant on $\gamma(z)$. Again by Lemma \ref{Lemma:L_k-Lyap} we deduce that $\gamma(z)\subset \E_{k,\J}$ and then $z\in \E_{k,\J}$. Hence $K\subset \E_k$ and this latter set is isolated.

\item We prove that the set
$$\left\{\{E_0\}, \E_2, ..., \E_{n_>}\right\}$$
is acyclic (see \cite[Definition 8.14, p. 187]{SmithThieme2011} for the definition). Let $z\in \partial \S_0$ and let $\gamma(z)=\{\phi(t),t\in\R\}\subset \partial \S_0$ be a complete orbit through $z$.

$\textbf{(i)}$ Suppose that $\lim_{t\to \infty}\phi(t)=E_0$. Since $\gamma(z)\subset \partial \S_0$, then by looking at the reduced problem \eqref{Eq:model} with the infected populations $k\in\llbracket 2,n_>\rrbracket$ we deduce from the recurrence hypothesis that necessarily $z\in \cap_{j=1}^{\sigma_{n_>}}\partial \S_j$ (otherwise we would have $\lim_{t\to +\infty}d(\phi(t),\E_j)=0$ for $j\in\llbracket 2,n_>\rrbracket$ which is absurd). Consequently $\gamma(z)\subset \cap_{j=1}^{\sigma_{n_>}}\partial \S_j$. By Lemma \ref{Lemma:L0} we know that $L_0$ defined by \eqref{Eq:L0} is a Lyapunov functional on $\gamma(z)$. It follows from \cite[Proposition 2.51, p. 53]{SmithThieme2011} that $L_0$ is constant on $\alpha(\phi)$ and on $\omega(z)$. From the arguments above (see the case $\RR_{0,1}\leq 1$) we deduce that $\alpha(\phi)=\{E_0\}$ and that $L_0$ is constant on $\gamma(z)$. By Lemma \ref{Lemma:L0-constant} we deduce that $\gamma(z)=\{E_0\}$.

$\textbf{(ii)}$ Suppose that $n_>\geq 2$ and that there exists $k\in\llbracket 2, n_>\rrbracket$ such that $\lim_{t\to \infty}d(\phi(t),\E_k)=0$. Again, by recurrence hypothesis we necessarily have $z\in (\cap_{j=1}^{\sigma_{k-1}}\partial \S_j)\cap(\cup_{j=1+\sigma_{k-1}}^{\sigma_k}\S_j)$ (otherwise we would have $\lim_{t\to+\infty}d(\phi(t),\E_j)=0$ for $j\in\llbracket 2,k-1\rrbracket$ which is absurd). Following the proof of the point 3.(ii) (see Steps 2--5) by first defining the subset $\J$, then proving \eqref{Eq:Phi_limit} and \eqref{Eq:Omega=tau} we arrive at $\gamma(z)$ being a homoclinic orbit through $z\in \E_{k,\J}\subset \E_k$. We just proved that the set  $\left\{\{E_0\}, \E_2, ..., \E_{n_>}\right\}$ is not cyclic and consequently acyclic.
\end{enumerate}

Combining the four previous points with the existence of a global attractor $\A\subset X_+$ (by Corollary \ref{Cor:attractor}) imply from \cite[Theorem 8.17, p. 188]{SmithThieme2011} that $\Phi$ is uniformly weakly $\p$-persistent. Actually, from \cite[Theorem 5.2, p. 126]{SmithThieme2011} we know that the semiflow is uniformly strongly $\p$-persistent. We note here that we could have used \cite[Theorem 4.2]{HaleWaltman89} since $\Phi$ is bounded dissipative by using Corollary \ref{Cor:attractor}. Finally from \cite[Theorem 5.19, p. 138]{SmithThieme2011} (which can be used here since $\A$ is a strong global attractor) we deduce the existence of the global attractor $\A_0$ in $\cup_{k=1}^{\sigma_1}\S_k$ which is LAS.

\subsubsection{Lyapunov functional}
\label{Sec:Lyap-functional}

Let $\J\subset \llbracket 1,\sigma_1\rrbracket$ and $K$ be a compact invariant such that $K\subset (\cap_{k \in \J}\S_k)\cap (\cap_{k\in\llbracket 1,\sigma_1\rrbracket \setminus \J}\partial \S_k)$. Let $(\alpha_1,...,\alpha_n)\in[0,1]^n$ such that $E^{*,1}_{\alpha_1,...,\alpha_n}\in \E_{1,\J}$. We prove here that the functional $L_1^{\alpha_{1},...,\alpha_{\sigma_1}}$ defined by \eqref{Eq:Lk} is a Lyapunov functional on every complete orbit $\gamma(z)$ for any $z\in K$.

\begin{enumerate}
    \item We first show that $L_1^{\alpha_{1},...,\alpha_{\sigma_1}}(\Phi_t(z))$ is well-defined for every $(t,z)\in\R_+\times K$. Using \eqref{Eq:c_s} we have $\Phi_t^S(z)\in[c_S,\frac{\Lambda}{\mu_0}]$ for every $t\geq 0$. In \cite[Proposition 4.4.4]{Richard2020}, it was proved that for every $(z,k)\in K\times \J$, there exists $\tau\geq 0$ such that 
    $$\int_0^\infty \beta_k(a)\Phi_t^k(z)(a)da>0, \quad \forall t\geq \tau.$$
    We now proceed in three steps.
    
  \underline{Step 1.} We prove, as it is claimed in the proof of \cite[Proposition 5.1]{Richard2020} (but without proof) or in \cite[Proposition 2.3]{Perasso2019}, that $\tau$ is uniform in $z$, \textit{i.e.} for every $k\in \J$ then
\begin{equation}
\label{Eq:tau_k}
\exists \ \tau_k\geq 0, \quad \forall z\in K: \quad \int_0^\infty \beta_k(a)\Phi_t^k(z)(a)da>0, \quad \forall t\geq \tau_k.
\end{equation}
To show this, let $k\in \J$ and first assume that $\overline{\beta_k}<+\infty$. Defining $ \overline{c}=(\overline{\beta_k}+\underline{\beta_k})/2$ then:
\begin{enumerate}[label=\textbf{(\roman*)}]
    \item There exists $\delta\in(0,\overline{c}-\underline{\beta_k})$ such that $\int_0^{\overline{\beta_k}-\delta}\Phi_0^k(z)(a)da>0$ for every $z\in K$. Indeed, otherwise by contradiction one gets a sequence $(\delta_n,z_n)\subset ((0,c-\underline{\beta_k}) \times K)^{\N}$ such that $\lim_{n\to \infty}\delta_n=0$ and $\int_0^{\overline{\beta_k}-\delta_n}\Phi_0^k(z_n)(a)da=0$ for each $n\in\N$. By compactness of $K$ there exists (up to a subsequence) $\overline{z}\in K$ such that $\lim_{n\to \infty}\|z_n-\overline{z}\|_{X}=0$. But then
    \begin{flalign*}
    \left|\int_0^{\overline{\beta_k}}\Phi_0^k(\overline{z})(a)da\right|&\leq \int_0^{\overline{\beta_k}-\delta_n}\left|\Phi_0^k(\overline{z})(a)-\Phi_0^k(z_n)(a)\right|da+\int_0^{\overline{\beta_k}-\delta_n}\Phi_0^k(z_n)(a)da+\int_{\overline{\beta_k}-\delta_n}^{\overline{\beta_k}}\Phi_0^k(\overline{z})(a)da\\
    &\leq \|\overline{z}-z_n\|_{X}+\|\Phi_0^k(\overline{z})\chi_{[\overline{\beta_k}-\delta_n,\overline{\beta_k}]}\|_{L^1}\underset{n\to \infty}{\longrightarrow} 0
    \end{flalign*}
    by the dominated convergence theorem, whence $\int_0^{\overline{\beta_k}}\Phi_0^k(\overline{z})(a)da=0$ which is absurd since $\overline{z}\in K\subset \S_k$. 
    \item For every $z\in K$, there exists $\overline{a}\in[0,\overline{\beta_k}-2\delta]$ such that $\int_{\overline{a}}^{\overline{a}+\delta}\Phi_0^k(z)(a)da>0$ (otherwise there would be $\overline{z}\in K$ such that $\int_0^{\overline{\beta_k}-\delta}\Phi_0^k(\overline{z})(a)da=0$ which contradicts the first item).
    \item For every $z\in K$, there exists $t_0\in[0,\delta+\underline{\beta_k}]$ such that $\int_{\underline{\beta_k}}^{\underline{\beta_k}+\delta}\Phi_{t_0}^k(z)(a)da>0$. Indeed, if $\overline{a}\leq \underline{\beta_k}$ then $t_0:=\underline{\beta_k}-\overline{a}$ works because
    $$\int_{\underline{\beta_k}}^{\underline{\beta_k}+\delta}\Phi_{t_0}^k(z)(a)da\geq e^{-\mu_0 t_0}\int_{\overline{a}}^{\overline{a}+\delta}\Phi_0^k(z)(a)da>0$$
    by the second item. Otherwise (if $\overline{a}>\underline{\beta_k}$) we can take $t_0:=\delta+\underline{\beta_k}$ and we get
    \begin{flalign*}
    \int_{\underline{\beta_k}}^{\underline{\beta_k}+\delta} \Phi_{t_0}^k(z)(a)da&\geq e^{-\mu_0 \underline{\beta_k}}\int_{\underline{\beta_k}}^{\underline{\beta_k}+\delta}\Phi_{t_0-\underline{\beta_k}}^k(z)(a-\underline{\beta_k})da \\
    &\geq e^{-\mu_0 \underline{\beta_k}}\int_0^\delta \Phi_{\delta}^k(z)(a)da\\
    &\geq e^{-\mu_0(\underline{\beta_k}+\delta)}\int_0^\delta \Phi_{\delta-a}^k(z)(0)da\\
    &\geq e^{-\mu_0(\underline{\beta_k}+\delta)}\int_0^\delta \Phi_{\delta-a}^S(z)\int_{\overline{a}+\delta-a}^{\overline{a}+2\delta-a}\beta_k(\xi)\Phi_{\delta-a}^k(z)(\xi)d\xi da \\
    &\geq c_S e^{-\mu_0(\underline{\beta_k}+\delta)}\int_0^\delta e^{-\mu_0 (\delta-a)}\int_{\overline{a}+\delta-a}^{\overline{a}+2\delta-a} \beta_k(\xi)\Phi_0^k(z)(\xi-\delta+a)d\xi da \\
    &\geq c_S e^{-\mu_0(\underline{\beta_k}+\delta)}\int_0^\delta e^{-\mu_0 (\delta-a)}\int_{\overline{a}}^{\overline{a}+\delta} \beta_k(\xi+\delta-a)\Phi_0^k(z)(\xi)d\xi da>0 
    \end{flalign*}
    due to the second item and the fact that $\xi+\delta-a\in[\overline{a},\overline{a}+2\delta]\subset[\underline{\beta_k},\overline{\beta_k}]$.
    \item We observe that for every $t\in[t_0,t_0+\delta]$, we have:
    \begin{flalign*}
    \Phi_t^k(z)(0)&\geq c_S\int_{\underline{\beta_k}+t-t_0}^{\underline{\beta_k}+\delta+t-t_0}\beta_k(a)\Phi_t^k(z)(a)da  \\
    &\geq c_S\int_{\underline{\beta_k}+t-t_0}^{\underline{\beta_k}+\delta+t-t_0}\beta_k(a)\Phi_{t_0}^k(z)(a-(t-t_0))da \\
    &\geq c_S\int_{\underline{\beta_k}}^{\underline{\beta_k}+\delta}\beta_k(a+t-t_0)\Phi_{t_0}^k(z)(a)da>0
    \end{flalign*}
    since $a+t-t_0\in[\underline{\beta_k},\underline{\beta_k}+2\delta)\subset (\underline{\beta_k},\overline{\beta_k})$ by definition of $\delta$ and we deduce that $\Phi_t^k(z)(0)>0$ for every $t\in[t_0,t_0+\delta]$.
    \item Similarly, for every $t\in(t_0+\underline{\beta_k},t_0+\underline{\beta_k}+2\delta)$, we see that:
    \begin{flalign*}
    \Phi_t^k(z)(0)&\geq c_S\int_{\underline{\beta_k}}^{\underline{\beta_k}+\delta}\beta_k(a)\Phi_t^k(z)(a)da \\ 
    &\geq c_S\int_{\underline{\beta_k}}^{\underline{\beta_k}+\delta}\beta_k(a)\Phi_{t-a}^k(z)(0)e^{-\mu_0 a}\chi_{[0,t]}(a)da\\
    &\geq c_S\int_{t-(\underline{\beta_k}+\delta)}^{t-\underline{\beta_k}}\beta_k(t-a)\Phi_a^k(z)(0)e^{-\mu_0(t-a)}da \\
    &\geq c_S\int_{\max\{t-(\underline{\beta_x}+\delta),t_0\}}^{\min\{t_0+\delta,t-\underline{\beta_k}\}}\beta_k(t-s)\Phi_s^k(z)(0)e^{-\mu_0(t-s)}ds>0 \\
    \end{flalign*}
    for every $t\in(t_0+\underline{\beta_k},t_0+\underline{\beta_k}+2\delta)$ since $\Phi_s^k(z)(0)>0$ for every $s\in[t_0,t_0+\delta]$ and $t-a\in[\underline{\beta_k},\underline{\beta_k}+\delta]$.
    \item The same computations prove that for every $n\geq 0$ and every $t\in(t_0+n\underline{\beta_k},t_0+n\underline{\beta_k}+(n+1)\delta)$ we have $\Phi_t^k(z)(0)>0$. Let $N=\left\lceil \underline{\beta_k}/\delta \right\rceil$ so that $N\delta\geq \underline{\beta_k}$, then it follows that
    $$\Phi_t^k(z)(0)>0 \quad \forall t>t_0+N\underline{\beta_k}.$$
    \item In conclusion, we proved that for every $z\in K$, there exists $t_0\in[0,\delta+\underline{\beta_k}]$ such that $\Phi_t^k(z)(0)>0$ for every $t>t_0+N\underline{\beta_k}$ with $\delta\in(0,c-\underline{\beta_k})$ and $N=\left\lceil \underline{\beta_k}/\delta \right\rceil$. It then leads to 
    $$\forall z\in K, \quad \Phi_t^k(z)(0)>0, \quad \forall t>\delta+(N+1)\underline{\beta_k}$$
    which implies \eqref{Eq:tau_k} (here $\tau_k=\delta+(N+1)\underline{\beta_k}$ is independent of $z$).
    \item The case $\overline{\beta_k}=\infty$ is treated similarly. We just have to let $\delta=\underline{\beta_k}$. We prove as (ii) that for every $z\in K$, there exists $\overline{a}\leq \infty$ such that $\int_{\overline{a}}^{\overline{a}+\delta}\Phi_0^k(z)(a)da>0$. Then as in (iii), we prove that for every $z\in K$, there exists $t_0\in[0,\delta+\underline{\beta_k}]=[0,2\underline{\beta_k}]$ such that $\int_{\underline{\beta_k}}^{2\underline{\beta_k}}\Phi_{t_0}^k(z)(a)da>0$ (in the case $\overline{a}\leq \underline{\beta_k}$ we may consider $t_0:=\underline{\beta_k}-\overline{a}$ and in the case $\overline{a}>\underline{\beta_k}$ we may consider $t_0=2\underline{\beta_k}$). We also prove (iv), \textit{i.e.} $\Phi_t^k(z)(0)>0$ for every $t\in[t_0,t_0+\delta]$. The step (v) is similar, while in step (vi), we have $N=\left\lceil \underline{\beta_x}/\delta \right\rceil=1$ so that
    $$\Phi_t^k(z)(0)>0, \quad \forall t>t_0+\underline{\beta_k}.$$
    Consequently, \eqref{Eq:tau_k} is still true with $\tau_k>3\underline{\beta_k}$.
\end{enumerate} 

    \underline{Step 2.} By invariance of $K$ we deduce that for every $(z,k)\in K\times \J$, there exists a complete orbit $\{\phi(t),t\in \R\}\subset K$ through $z$ leading to
    $$\int_0^\infty \beta_k(a)\Phi_0^k(z)(a)da=\int_0^\infty \beta_k(a)\Phi_0^k(\phi(0))(a)da=\int_0^\infty \beta_k(a)\Phi_{\overline{t}}^k(\phi(-\overline{t}))(a)da>0$$
    by using \eqref{Eq:tau_k}. Since $K$ is compact then a continuity argument implies that
    \begin{equation*}
    \forall k\in\J, \exists \ c_k>0, \ \forall z\in K: \quad \int_0^\infty \beta_k(a)\Phi_0^k(z)(a)da\geq c_k.
    \end{equation*}
    Again, by invariance of $K$, we deduce that for every $(z,k)\in K\times \J$, there exists a complete orbit $\{\phi(t),t\in \R\}\subset K$ through $z$ leading for every $t\geq 0$ to
    $$\int_0^\infty \beta_k(a)\Phi_t^k(z)(a)da=\int_0^\infty \beta_k(a)\Phi_t^k(\phi(0))(a)da=\int_0^\infty \beta_k(a)\Phi_0^k(\phi(t))(a)da\geq c_k$$
    that is to say
    \begin{equation}\label{Eq:int_c}
    \forall k\in \J, \exists \ c_k>0, \forall t\geq 0, \ \forall z\in K: \int_0^\infty \beta_k(a)\Phi_t^k(z)(a)da\geq c_k.
    \end{equation}
    Now we claim that 
    \begin{equation}\label{Eq:cs_cx}
    \forall (t,a,z,k)\in (\R_+)^2\times K\times \J: \quad \Phi_t^k(z)(a)\geq c_S c_k \pi_k(a)
    \end{equation}
    where we remind that $c_S$ is defined in \eqref{Eq:c_s}. Let $k\in \J$. The proof is similar to \cite[Proposition 5.1.2]{Richard2020} but expressed on $K$ rather than on the omega-limit set of some initial condition of $\S_k$, hence we give its proof for completeness. Let $(t,a)\in (\R_+)^2$ such that $t>a$. From the expression of the semiflow \eqref{Eq:Duhamel_phik} combined with \eqref{Eq:c_s} and \eqref{Eq:int_c} we get \eqref{Eq:cs_cx} for every $t>a$. Suppose now that $t\leq a$. Once again by invariance of $K$, we know that for every $z\in K$, there exists a complete orbit $\{\phi(t),t\in \R\}\subset K$ through $z$ leading to
    $$\Phi_t^k(z)(a)=\Phi_t^k(\phi(0))(a)=\Phi_{a+1}^k(\phi(t-(a+1)))(a)\geq c_Sc_k\pi_k(a)$$
    and \eqref{Eq:cs_cx} is proved.
    
    \underline{Step 3.} Let $\J^*=\{k\in\J: \alpha_k>0\}$. We prove now that 
    \begin{equation}
    \label{Eq:well-def_Lk}
    \exists \ c_{\Phi}>0, \ \forall (t,a,z,k)\in(\R_+)^2\times K\times \J^*: \quad \left(\dfrac{x_{k,\alpha_k}^*(a)}{\Phi_t^k(z)(a)}-1\right)^2\leq c_{\Phi}.
    \end{equation}
    Let $(k,t,a)\in \J^*\times(\R_+)^2$. We deduce from \eqref{Eq:cs_cx} that
    \begin{equation*}
    \forall z\in K: \quad \dfrac{\Phi_t^k(z)(a)}{x_{k,\alpha_k}^*(a)}\geq \dfrac{c_Sc_k r_k}{\mu_S \alpha_k (R_0^k-1)}=:\tilde{c}_k>0
    \end{equation*}
    leading to
    $$\left(\dfrac{x_{k,\alpha_k}^*(a)}{\Phi_t^k(z)(a)}-1\right)^2\leq \dfrac{1}{(\tilde{c}_k)^2}+1<\infty$$
    which proves \eqref{Eq:well-def_Lk} when considering
    $$c_{\Phi}=\max_{k\in \J^*}\left\{\dfrac{1}{\tilde{c}^2_k}+1
    \right\}.$$

    \underline{Step 4.} Let $(t,a,z,k)\in (\R_+)^2\times K\times \J^*$, then from \eqref{Eq:well-def_Lk} we get
    \begin{flalign}
        x_{k,\alpha_k}^*(a)g\left(\dfrac{\Phi_t^k(z)(a)}{x_{k,\alpha_k}^*(a)}\right)&=x_{k,\alpha_k}^*(a)\left(\dfrac{\Phi_t^k(z)(a)}{x_{k,\alpha_k}^*(a)}+\ln\left(\dfrac{x_{k,\alpha_k}^*(a)}{\Phi_t^k(z)(a)}\right)-1\right) \nonumber \\
        &\leq  x_{k,\alpha_k}^*(a)\left(\dfrac{\Phi_t^k(z)(a)}{x_{k,\alpha_k}^*(a)}+\dfrac{x_{k,\alpha_k}^*(a)}{\Phi_t^k(z)(a)}-2\right)  =\Phi_t^k(z)(a)\left(\dfrac{x_{k,\alpha_k}^*(a)}{\Phi_t^k(z)(a)}-1\right)^2 \nonumber \\
        &\leq c_{\Phi}\Phi_t^k(z)(a)
    \label{Eq:g_xstar}
    \end{flalign}
    by using the fact that $\ln(x)\leq x-1$ for each $x>0$.
    
    \underline{Step 5.} Finally, from \eqref{Eq:g_xstar} and using the integrability on $\R_+$ of the functions 
    $$a\longmapsto \Psi_k(a)\Phi_t^k(z)(a), \quad \forall (t,z)\in \R_+\times K$$
    for each $k\in \llbracket 1,n\rrbracket$, it follows that $(t,z)\longmapsto L_1^{\alpha_1,...,\alpha_{\sigma_1}}(\Phi_t(z))$ is well-defined on $\R_+\times K$.
    
    \item We prove that $L^{\alpha_1,...,\alpha_{\sigma_1}}_1$ is a Lyapunov function on every complete orbit $\gamma(z)$ for any $z\in K$. This amounts to showing that $L^{\alpha_1,...,\alpha_{\sigma_1}}_1$ is a Lyapunov function on each positive orbit $\gamma^+(z)$ for every $z\in K$. Let $z:=(S_0,x_{1,0},...,x_{n,0})\in K$ and let $\widehat{z}:=(S_0,(0,x_{1,0}),...,(0,x_{n,0}))$. We will follow the sketch of proof of Lemma \ref{Lemma:L0} (for the case $\RR_{0,1}\leq 1$), which is here more tedious because of the shape of the functional.
    
    $\textbf{(i)}$ Suppose first that $\widehat{z}\in D((A+F)_0)\cap \X_{0+}$ where $D((A+F)_0)$ is defined by \eqref{Eq:domain_A+F}. Then using \cite[Theorem 5.6.6, p. 242]{MagalRuan2018} we can compute the derivative of the function $t\longmapsto L_1^{\alpha_1,...,\alpha_{\sigma_1}}(\Phi_t(z))$ according to $t$ (which is well-defined due to the first point) since $\Phi_t^k(z)$ belong to $W^{1,1}(0,\infty)$ for each $t\geq 0$ and every $k\in\llbracket 1,n\rrbracket$. Similar computations to the case $\RR_{0,1}\leq 1$ (see \eqref{Eq:Lk_deriv-inf}) lead to
    \begin{flalign}
    \label{Eq:L1_deriv-inf}
       &\dfrac{dL_1^{\alpha_{1},...,\alpha_{\sigma_1}}(\Phi_{t}(z))}{dt}=-\displaystyle\sum_{k\in\llbracket 1+\sigma_1,n\rrbracket\cup(\J\setminus \J^*)} \left(\dfrac{1}{r_k}-S^*_{\sigma_1}\right)\int_0^\infty \beta_k(a)\Phi_t^k(z)(a)da-\dfrac{\mu_S}{\Phi_t^S(z)}(\Phi_t^S(z)-S^*_{\sigma_1})^2 \nonumber \\
       &\qquad -|\J^*|g\left(\dfrac{S^*_{\sigma_1}}{\Phi_t^S(z)}\right)-\displaystyle\sum_{k\in\J^*} S^*_{\sigma_1} \int_0^\infty \beta_k(a)x_{k,\alpha_k}^*(a)g\left(\dfrac{\Phi_t^k(z)(a)\int_0^\infty \beta_k(s)x_{k,\alpha_k}^*(s)ds}{x_{k,\alpha_k}^*(a)\int_0^\infty \beta_k(s)\Phi_t^k(z)(s)ds}\right)da\leq 0
    \end{flalign}
    since $\frac{1}{r_k}>S^*_{\sigma_1}=\frac{1}{r_{\sigma_1}}$ for every $k\in\llbracket 1+\sigma_1,n\rrbracket$ and $\frac{1}{r_k}=\frac{1}{r_{\sigma_1}}$ for every $k\in\llbracket 1,\sigma_1\rrbracket$. It can be rewritten in the following form:
    \begin{flalign}
    \label{Eq:Lk_int}
    L_1^{\alpha_1,...,\alpha_{\sigma_1}}(\Phi_{t}(z))&=L_1^{\alpha_1,...,\alpha_{\sigma_1}}(z)-\displaystyle\sum_{k=1+\sigma_1}^n  \left(\dfrac{1}{r_k}-S^*_{\sigma_1}\right)\int_0^t \int_0^\infty \beta_k(a)\Phi_\xi^k(z)(a)da d\xi \nonumber \\
    & -\int_0^t \dfrac{\mu_S}{\Phi_\xi^S(z)}(\Phi_\xi^S(z)-S^*_{\sigma_1})^2 d\xi- |\J^*|\displaystyle\int_0^t g\left(\dfrac{S_{\sigma_1}^*}{\Phi_{\xi}^S(z)}\right)d\xi \nonumber \\
    & - \sum_{k\in\J^*} S^*_{\sigma_1}\int_0^t \int_0^\infty \beta_k(a)x_{k,\alpha_k}^*(a)g\left(\dfrac{\Phi_\xi^k(z)(a)\int_0^\infty \beta_k(s)x_{k,\alpha_k}^*(s)ds}{x_{k,\alpha_k}^*(a)\int_0^\infty \beta_k(s)\Phi_\xi^k(z)(s)ds}\right)da d\xi
    \end{flalign}

    $\textbf{(ii)}$ Now, suppose that $\widehat{z}\not\in D((A+F)_0)\cap \X_{0+}$. We will use a density argument to compute the derivative of the functional $L_1^{\alpha_1,...,\alpha_{\sigma_1}}$ along the solution of \eqref{Eq:model} for the initial condition $z$. However, taking a sequence of initial conditions in $D((A+F)_0)\cap \X_{0+}$ is not sufficient for the functional $L_1^{\alpha_1,...,\alpha_{\sigma_1}}$ to be well-defined because similar estimates to \eqref{Eq:cs_cx} are necessary for the sequence of initial conditions. Hence we will build a suitable sequence.
    
    \textbf{Step 1.} Let $t\geq 0$ and $j\in \J$. We know by \eqref{Eq:cs_cx} that $\Phi_s^k(z)(a)\geq c_Sc_k\pi_k(a)$ for every $(s,a,z,k)\in (\R_+)^2\times K\times \J$. We define
    $$v=\left(\tilde{S}_0,\tilde{x}_{1,0},...,\tilde{x}_{n,0}\right), \qquad \widehat{v}=\left(\tilde{S}_0,(0,\tilde{x}_{1,0}),...,(0,\tilde{x}_{n,0})\right)$$
    where
    $$\tilde{S}_0=\dfrac{S_0\int_0^\infty \beta_j(a)x_{j,0}(a)da-\nu}{\int_0^\infty \beta_j(a)x_{j,0}(a)da-\frac{\nu}{S_j^*}}$$
    and
    $$\tilde{x}_{k,0}=\begin{cases}
    x_{k,0}-\dfrac{\nu\pi_k \int_0^\infty \beta_k(a)x_{k,0}(a)da}{\int_0^\infty \beta_j(a)x_{j,0}(a)da} & \text{ if } k\in \J \\
    0 & \text{ if } k\in \llbracket 1,n\rrbracket \setminus \J
    \end{cases}$$
    with $\nu>0$ small enough such that $v\in X_+$ and $\widehat{v}\in \X_{0+}$ (which exist by using \eqref{Eq:cs_cx}).
    Here we use the density of $D((A+F)_0)\cap \X_{0+}$ into $\X_{0+}$ to assert:
$$\exists \ (\widehat{v}^{(m)})_{m\in\N}\subset \left(D((A+F)_0)\cap \X_{0+}\right)^{\N}:  \|\widehat{v}^{(m)}-\widehat{v}\|_{\X}\underset{m\to \infty}{\longrightarrow} 0$$
where $\widehat{v}^{(m)}=(S_{0}^{(m)},(0,x_{1,0}^{(m)}),...,(0,x_{n,0}^{(m)}))\in D((A+F)_0)\cap \X_{0+}$. Since $\Phi$ is state-continuous uniformly in finite time (see Proposition \ref{Prop:solutions} 4.), then the sequence $v^{(m)}=(S_{0}^{(m)},x_{1,0}^{(m)},...,x_{n,0}^{(m)})\in X_{+}$ satisfies
$$\sup_{s\in[0,t]}\|\Phi_s(v^{(m)})-\Phi_s(v)\|_{X}\underset{m\to \infty}{\longrightarrow}0.$$

Now we define the sequence $(\widehat{w}^{(m)})_{m\in \N}:=(\overline{S}^{(m)}_{0},(0,\overline{x}^{(m)}_{1,0}),...,(0,\overline{x}^{(m)}_{n,0}))_{m\in \N}\in (\X_{0+})^{\N}$
where
$$\overline{S}^{(m)}_{0}=\dfrac{S^{(m)}_{0}\int_0^\infty \beta_j(a) x^{(m)}_{j,0}(a)da+\nu}{\int_0^\infty \beta_j(a) x^{(m)}_{j,0}(a)da+\frac{\nu}{S_j^*}}$$
and
$$\overline{x}^{(m)}_{k,0}=\begin{cases}
x^{(m)}_{k,0}+\dfrac{\nu\pi_k \int_0^\infty \beta_k(a)x^{(m)}_{k,0}(a)da}{\int_0^\infty \beta_j(a)x^{(m)}_{j,0}(a)da} & \text{ if } k\in \J \\
0 & \text{ if } k\in\llbracket 1,n\rrbracket \setminus \J
\end{cases}$$
which is well-defined since we have $\int_0^\infty \beta_j(a) x^{(m)}_{j,0}(a)da\underset{m\to \infty}{\longrightarrow} \int_0^\infty \beta_j(a) x_{j,0}(a)da-\frac{\nu}{S_j^*}>0$ by definition of $\nu$. It follows that $\int_0^\infty \beta_j(a)x^{(m)}_{j,0}(a)da>0$ for each $m\geq 0$ (up to a sub-sequence). Recalling that $x_{k,0}\in \partial \S_k$ for every $k\in\llbracket 1,n\rrbracket \setminus \J$ (by definition of $K$) and by invariance of $K$, then $x_{k,0}(a)=0$ f.a.e. $a\geq 0$. By construction we then see that $\lim_{m\to \infty}\|\widehat{w}^{(m)}-\widehat{z}\|_{\X}=0$ and $\lim_{m\to \infty}\|w^{(m)}-z\|_{X}=0$ where 
$$w^{(m)}:=(\overline{S}^{(m)}_{0},\overline{x}_{1,0}^{(m)},...,\overline{x}_{n,0}^{(m)})\in X_+$$
leading to
\begin{equation}\label{Eq:sup_phi0}
\sup_{s\in[0,t]}\|\Phi_s(w^{(m)})-\Phi_s(z)\|_{X}\underset{m\to \infty}{\longrightarrow}0.
\end{equation}

We now check that $\widehat{w}^{(m)}\in D((A+F)_0)$. Since $\widehat{w}^{(m)}\in D(A)$, it remains to prove that $A\widehat{w}^{(m)}+F(\widehat{w}^{(m)})\in \X_0$ \textit{i.e.} 
\begin{equation}\label{Eq:domain_wn}
\forall k\in \J, \qquad \overline{x}^{(m)}_{k,0}(0)=\overline{S}^{(m)}_{0}\int_0^\infty \beta_k(a) \overline{x}^{(m)}_{k,0}(a)da.
\end{equation}
Knowing that $\widehat{v}^{(m)}\in D((A+F)_0)$ then by construction we have for every $k\in \J$:
    \begin{flalign*}
    \overline{x}^{(m)}_{k,0}(0)&=x^{(m)}_{k,0}(0)+\dfrac{\nu\int_0^\infty \beta_k(a)x^{(m)}_{k,0}(a)da}{\int_0^\infty \beta_j(a)x_{j,0}^{(m)}(a)da}=\dfrac{\int_0^\infty \beta_k(a)x^{(m)}_{k,0}(a)da}{\int_0^\infty \beta_j(a)x_{j,0}^{(m)}(a)da}\left(S^{(m)}_{0}\int_0^\infty \beta_j(a)x_{j,0}^{(m)}(a)da+\nu\right) \\
    &=\dfrac{\overline{S}^{(m)}_{0}\int_0^\infty \beta_k(a)x^{(m)}_{k,0}(a)da}{\int_0^\infty \beta_j(a)x_{j,0}^{(m)}(a)da}\left(\int_0^\infty \beta_j(a)x_{j,0}^{(m)}(a)da+\dfrac{\nu}{S_j^*}\right) \\
    &=\overline{S}_{0}^{(m)}\left(\int_0^\infty \beta_k(a)x^{(m)}_{k,0}(a)da+\left(\int_0^\infty \beta_k(a)\pi_k(a)da\right)\dfrac{\nu\int_0^\infty \beta_j(a)x^{(m)}_{j,0}(a)da}{\int_0^\infty \beta_j(a)x^{(m)}_{j,0}(a)da}\right)\\
    &=\overline{S}_{0}^{(m)}\int_0^\infty \beta_k(a)\overline{x}^{(m)}_{k,0}(a)da
    \end{flalign*}
    so \eqref{Eq:domain_wn} is satisfied and $(\widehat{w}^{(m)})_{m\in \N}\subset (D((A+F)_0)\cap \X_{0+})^{\N}$. 
    
    \textbf{Step 2.} Let $\ep>0$ small enough such that
    \begin{equation}\label{Eq:eps_small}
    \forall k\in \J: \quad c_k-\ep\|\beta_k\|_{L^\infty}>0 \quad \text{and} \quad c_S-\ep>0
    \end{equation}
    holds. We show first a property analogous to \eqref{Eq:cs_cx} which is
    \begin{flalign}\label{Eq:cw}
    \exists \ c_w>0, \exists M\in \N, \forall (s,a,k)\in[0,t]\times \R_+\times \J, \forall m\geq M: \nonumber  \\
    \Phi_s^k(w^{(m)})(a)\geq c_w \pi_k(a) \text{ and } \sup_{s\in[0,t]}\|\Phi_s(w^{(m)})-\Phi_s(z)\|_{X}\leq \ep.
    \end{flalign}
    Let $k\in \J$ then
    $$\dfrac{\int_0^\infty \beta_k(a)x_{k,0}^{(m)}(a)da}{\int_0^\infty \beta_j(a)x_{j,0}^{(m)}(a)da}\underset{m\to \infty}{\longrightarrow}\dfrac{\int_0^\infty \beta_k(a)x_{k,0}(a)da-\frac{\nu \int_0^\infty \beta_k(a)x_{k,0}(a)da}{S^*_{k}\int_0^\infty \beta_j(a)x_{j,0}(a)da}}{\int_0^\infty \beta_j(a)x_{j,0}(a)da-\frac{\nu}{S_j^*}}=\dfrac{\int_0^\infty \beta_k(a)x_{k,0}(a)da}{\int_0^\infty \beta_j(a)x_{j,0}(a)da}>0.$$
    By definition of $\overline{x}^{(m)}_{k,0}$ and by using \eqref{Eq:Duhamel_phik} we deduce that there exists $M_k\in \N$ and $c_{w,k}>0$ such that for every $m\geq M_k$ and every $(s,a)\in[0,t]\times [s,\infty)$ 
    \begin{flalign*}
    \Phi_s^{k}(w^{(m)})(a)&=\overline{x}^{(m)}_{k,0}(a-s)e^{-\int_{a-s}^a \mu_k(\xi)d\xi} \\
    &\geq \nu\pi_k(a-s)e^{-\int_{a-s}^a \mu_k(\xi)d\xi}\left(\dfrac{\int_0^\infty \beta_k(a)x_{k,0}^{(m)}(a)da}{\int_0^\infty \beta_j(a)x_{j,0}^{(m)}(a)da}\right)=\nu\pi_k(a)\left(\dfrac{\int_0^\infty \beta_k(a)x_{k,0}^{(m)}(a)da}{\int_0^\infty \beta_j(a)x_{j,0}^{(m)}(a)da}\right) \\
    &\geq c_{w,k}\pi_k(a).
    \end{flalign*}
    Considering $c_w^{(1)}=\min_{k\in \J}\{c_{w,k}\}>0$ and $M^{(1)}=\max_{k\in \J}\{M_k\}$ then $\Phi_s^k(w^{(m)})(a)\geq c_{\omega}^{(1)}\pi_k(a)$ for every $m\geq M^{(1)}$ and every $(s,a,k)\in[0,t]\times [s,\infty)\times \J$. From \eqref{Eq:sup_phi0} there exists $M^{(2)}\in \N$ such that $    \sup_{s\in[0,t]}\|\Phi_s(w^{(m)})-\Phi_s(z)\|_{X}\leq \ep$ for every $m\geq M^{(2)}$  and every $k\in \J$. Also, using \eqref{Eq:int_c}, we see that for each $k\in \J$ we have
    \begin{flalign}\label{Eq:int_c-wm}
        \int_0^\infty \beta_k(a)\Phi_s^k(w^{(m)})(a)da&=\int_0^\infty \beta_k(a)\Phi_s^k(z)(a)da+\int_0^\infty \beta_k(a)\left(\Phi_s^k(w^{(m)})(a)-\Phi_s^k(z)(a)\right)da \nonumber \\
        &\geq c_k-\ep\|\beta_k\|_{L^\infty}
    \end{flalign}
    for every $s\in[0,t]$ and every $m\geq M^{(2)}$. Moreover by \eqref{Eq:c_s} we have
    \begin{equation}\label{Eq:estimate_phi_S}
    \ep+\dfrac{\Lambda}{\mu_0}\geq \ep+\Phi_s^{S}(v)\geq \Phi_s^{S}(w^{(m)})\geq \Phi_s^{S}(v)-\ep\geq c_S-\ep, \quad \forall m\geq M^{(2)}
    \end{equation}
    and using \eqref{Eq:Duhamel_phik} we have
    $$\Phi_s^k(w^{(m)})(a)\geq (c_S-\ep)(c_k-\ep\|\beta_k\|_{L^\infty})\pi_k(a)=:c_{\ep,k}\pi_k(a)$$
    (wherein $c_{\ep,k}>0$ by \eqref{Eq:eps_small}) for every $(s,a,k)\in[0,t]\times [0,s)\times \J$ and every $m\geq M^{(2)}$. Considering $c_w^{(2)}=\min_{k\in \J}\{c_{\ep,k}\}$ implies that $\Phi_s^k(w^{(m)})(a)\geq c_{\omega}^{(2)}\pi_k(a)$ for every $m\geq M^{(2)}$ and every $(s,a,k)\in[0,t]\times [0,s)\times \J$. Finally, considering $c_w=\min\{c_w^{(1)}, c_w^{(2)}\}$ and $M=\max\{M^{(1)}, M^{(2)}\}$ prove \eqref{Eq:cw}. We then get
    $$\forall (s,a,k)\in[0,t]\times \R_+\times \J^*, \ \forall m\geq M: \quad \dfrac{\Phi_s^k(w^{(m)})(a)}{x_{k,\alpha_k}^*(a)}\geq \dfrac{c_wr_k}{\mu_S \alpha_k (R_0^k-1)}=:\overline{c}_k>0.$$
    Following the steps 3-4 of the first point, we deduce that
    $$x_{k,\alpha_k}^*(a)g\left(\dfrac{\Phi_s^k(w^{(m)})(a)}{x_{k,\alpha_k}^*(a)}\right)\leq \Phi_s^k(w^{(m)})(a)\left(\dfrac{1}{\overline{c}_k^2}+1\right)$$ 
    for every $(s,a,k)\in[0,t]\times \R_+\times \J^*$ and every $m\geq M$. Using \eqref{Eq:cw} we see that for each $k\in \J$ we have
    $$0\leq \int_0^\infty \Psi_k(a)\Phi_s^k(w^{(m)})(a)da\leq \ep\|\Psi_k\|_{L^{\infty}}+\int_0^\infty \Psi_k(a)\Phi_s^k(z)(a)da, \quad \forall s\in[0,t]$$
    which implies that $s\longmapsto L_1^{\alpha_1,...,\alpha_{\sigma_1}}(\Phi_s(w^{(m)}))$ is well-defined on $[0,t]$ for each $m\geq M$. It follows that we can compute its derivative as in the first point of this section (in the case where the initial condition is in the domain of the operator), leading for every $s\in[0,t]$ and every $m\geq M$ to:
    \begin{flalign}\label{Eq:Lk_int_m} 
    &L_1^{\alpha_1,...,\alpha_{\sigma_1}}(\Phi_{s}(w^{(m)}))=L_1^{\alpha_1,...,\alpha_{\sigma_1}}(w^{(m)})-\displaystyle\sum_{k=1+\sigma_1}^n \left(\dfrac{1}{r_k}-S^*_{\sigma_1}\right) \int_0^s \int_0^\infty \beta_k(a)\Phi_\xi^k(w^{(m)})(a)da d\xi \nonumber \\
    &\qquad-\displaystyle \int_0^s \dfrac{\mu_S}{\Phi_\xi^S(w^{(m)})}(\Phi_\xi^S(w^{(m)})-S^*_{\sigma_1})^2 d\xi- |\J^*|\int_0^s g\left(\dfrac{S^*_{\sigma_1}}{\Phi_\xi^S(w^{(m)})}\right)d\xi \nonumber \\
    &\qquad - \displaystyle\sum_{k\in\J^*}\int_0^s S_{\sigma_1}^* \int_0^\infty \beta_k(a)x_{k,\alpha_k}^*(a)g\left(\dfrac{\Phi_\xi^k(w^{(m)})(a)\int_0^\infty \beta_k(s)x_{k,\alpha_k}^*(s)ds}{x_{k,\alpha_k}^*(a)\int_0^\infty \beta_k(s)\Phi_\xi^k(w^{(m)})(s)ds}\right)da d\xi.
    \end{flalign}
    
    \textbf{Step 3.} It remains to show the formula \eqref{Eq:Lk_int_m} but for the initial condition $z$ instead of $w^{(m)}$. Let $\ep>0$ small enough such that \eqref{Eq:eps_small} is satisfied. Let $m\geq M$ (where $M$ is defined in \eqref{Eq:cw}). We thus compute for $s\in[0,t]$:
    \begin{equation}\label{Eq:L1_deriv-z}
    \begin{array}{rcl}
    &&\H_s(z):=\left|L_1^{\alpha_1,...,\alpha_{\sigma_1}}(\Phi_{s}(z))-L_1^{\alpha_1,...,\alpha_{\sigma_1}}(z)+\displaystyle\sum_{k=1+\sigma_1}^n \left(\dfrac{1}{r_k}-S^*_{\sigma_1}\right)\int_0^s \int_0^\infty \beta_k(a)\Phi_{\xi}^k(z)(a)dad\xi\right. \vspace{0.2cm}\\
    &&+\displaystyle\int_0^s \dfrac{\mu_S}{\Phi_\xi^S(z)}(\Phi_\xi^S(z)-S^*_{\sigma_1})^2 d\xi+|\J^*|\int_0^s g\left(\dfrac{S^*_{\sigma_1}}{\Phi_\xi^S(z)}\right)d\xi \\
    &&+\left.\displaystyle \sum_{k\in\J^*}\int_0^s S_{\sigma_1}^* \int_0^\infty \beta_k(a)x_{k,\alpha_k}^*(a)g\left(\dfrac{\Phi_\xi^k(v)(a)\int_0^\infty \beta_k(s)x_{k,\alpha_k}^*(s)ds}{x_{k,\alpha_k}^*(a)\int_0^\infty \beta_k(s)\Phi_\xi^k(v)(s)ds}\right)da d\xi\right| \vspace{0.2cm}\\
    &\leq& \displaystyle\left|L_1^{\alpha_1,...,\alpha_{\sigma_1}}(\Phi_{s}(z))-L_1^{\alpha_1,...,\alpha_{\sigma_1}}(\Phi_{s}(w^{(m)}))\right|+\left|L_1^{\alpha_1,...,\alpha_{\sigma_1}}(z)-L_1^{\alpha_1,...,\alpha_{\sigma_1}}(w^{(m)})\right|\\
    &&+\displaystyle\sum_{k=1+\sigma_1}^n \left(\dfrac{1}{r_k}-S^*_{\sigma_1}\right)s\|\beta_k\|_{L^\infty}\times \sup_{\xi\in[0,s]}\|\Phi^k_{\xi}(z)-\Phi^k_{\xi}(w^{(m)})\|_{L^1(\R_+)}\\
    &&+s \mu_S\times \sup_{\xi\in [0,s]} \left| \dfrac{\left(\Phi^S_{\xi}(w^{(m)})-S_{\sigma_1}^*\right)^2}{\Phi_{\xi}^S(w^{(m)})}- \dfrac{\left(\Phi^S_{\xi}(z)-S_{\sigma_1}^*\right)^2}{\Phi_{\xi}^S(z)}\right| \\
    &&+\displaystyle s |\J^*| \sup_{\xi\in[0,s]}\left|g\left(\dfrac{S_{\sigma_1}^*}{\Phi_{\xi}^S(z)}\right)-g\left(\dfrac{S_{\sigma_1}^*}{\Phi_{\xi}^S(w^{(m)})}\right)\right| \\
    &&    
    +\displaystyle s S_{\sigma_1}^*\sup_{\xi\in[0,s]}\sum_{k\in \J^*}  \int_0^\infty \beta_k(a)x^*_{k,\alpha_k}(a) \left|g\left(\dfrac{\Phi_{\xi}^k(z)(a)\int_0^\infty \beta_k(\zeta)x^*_{k,\alpha_k}(\zeta)d\zeta}{x^*_{k,\alpha_k}(a)\int_0^\infty \beta_k(\zeta)\Phi^k_{\xi}(z)(\zeta)d\zeta}\right)\right. \\
    &&\hspace{5.5cm}\left.-g\left(\dfrac{\Phi_{\xi}^k(w^{(m)})(a)\int_0^\infty \beta_k(\zeta)x^*_{k,\alpha_k}(\zeta)d\zeta}{x^*_{k,\alpha_k}(a)\int_0^\infty \beta_k(\zeta)\Phi^k_{\xi}(w^{(m)})(\zeta)d\zeta}\right)\right|da.
    \end{array}
    \end{equation}
    From \eqref{Eq:cw} and \eqref{Eq:estimate_phi_S} we get
   \begin{flalign*}
   \left|S_{\sigma_1}^*g\left(\dfrac{\Phi_s^S(w^{(m)})}{S_{\sigma_1}^*}\right)-S_{\sigma_1}^*g\left(\dfrac{\Phi_s^S(z)}{S_{\sigma_1}^*}\right)\right|&\leq \left|\Phi_s^S (w^{(m)})-\Phi_s^S(z)\right|\times\left(1+\dfrac{S^*_{\sigma_1}}{c_S-\ep}\right) \\
   &\leq \ep\times\left(1+\dfrac{S^*_{\sigma_1}}{c_S-\ep}\right) =:\kappa_1(\ep)
   \end{flalign*}
    and
    \begin{flalign*}
    &\sum_{k\in \llbracket 1+\sigma_1,n\rrbracket \cup (J\setminus \J^*)}\left|\int_0^\infty \Psi_k(a)\Phi_s^k(z)(a)da-\int_0^\infty \Psi_k(a)\Phi_s^k(w^{(m)})(a)da\right|\\
    &\leq \sum_{k\in \llbracket 1+\sigma_1,n\rrbracket \cup (J\setminus \J^*)} \|\Psi_k\|_{L^\infty}\|\Phi_s^k(z)-\Phi_s^k(w^{(m)})\|_{L^1}\leq \ep   \sum_{k\in \llbracket 1+\sigma_1,n\rrbracket \cup (J\setminus \J^*)} \|\Psi_k\|_{L^\infty}=:\kappa_2(\ep)
    \end{flalign*}
    for every $s\in[0,t]$. Using \eqref{Eq:cs_cx}-\eqref{Eq:cw}, we can define $c=\min_{k\in \J}\{c_w,c_Sc_k\}$ so that for every $k\in \J^*$ we have
   $$\dfrac{\Phi_s^k(z)(a)}{x_{k,\alpha_k}^*(a)}\geq \dfrac{cr_k}{\mu_S(\RR_{0,k}-1)}, \quad \dfrac{\Phi_s^k(w^{(m)})(a)}{x_{k,\alpha_k}^*(a)}\geq \dfrac{cr_k}{\mu_S(\RR_{0,k}-1)}$$
   for every $s\in[0,t]$ and every $a\geq 0$. It follows that
  \begin{flalign*}
  &\sum_{k\in \J^*}\left|\int_0^\infty \Psi_k(a)x_{k,\alpha_k}^*(a)g\left(\dfrac{\Phi_s^k(z)(a)}{x_{k,\alpha_k}^*(a)}\right)da-\int_0^\infty \Psi_k(a)x_{k,\alpha_k}^*(a)g\left(\dfrac{\Phi_s^k(w^{(m)})(a)}{x_{k,\alpha_k}^*(a)}\right)da\right|\\
  &\qquad \leq \sum_{k\in \J^*} \|\Psi_k\|_{L^\infty} \int_0^\infty x_{k,\alpha_k}^*(a)\left|\dfrac{\Phi_s^k(z)(a)}{x_{k,\alpha_k}^*(a)}-\dfrac{\Phi_s^k(w^{(m)})(a)}{x_{k,\alpha_k}^*(a)}\right|\times \left(1+\dfrac{\mu_S(\RR_{0,k}-1)}{cr_k}\right)da\\
  &\qquad \leq \sum_{k\in \J^*} \|\Psi_k\|_{L^\infty}\left(1+\dfrac{\mu_S(\RR_{0,k}-1)}{cr_k}\right)\|\Phi_s^k(z)-\Phi_s^k(w^{(m)})\|_{L^1} \\
  &\qquad \leq \|\Psi_k\|_{L^\infty}\left(1+\dfrac{\mu_S(\RR_{0,k}-1)}{cr_k}\right)\ep=:\kappa_3(\ep)
  \end{flalign*}
  for every $s\in[0,t]$. Furthermore, from \eqref{Eq:cw}-\eqref{Eq:estimate_phi_S} we deduce that
  \begin{flalign*}
  &\displaystyle\sum_{k=1+\sigma_1}^n \|\beta_k\|_{L^\infty}\left(\dfrac{1}{r_k}-S^*_{\sigma_1}\right)s\times \sup_{\xi\in[0,s]}\|\Phi^k_{\xi}(z)-\Phi^k_{\xi}(w^{(m)})\|_{L^1(\R_+)}\\
  &\qquad \leq \sum_{k=1+\sigma_1}^n \|\beta_k\|_{L^\infty}\left(\dfrac{1}{r_k}-S^*_{\sigma_1}\right)t\times \ep=:\kappa_4(\ep),
  \end{flalign*}
  \begin{equation*}
  \begin{array}{rcl}
   && s S_{\sigma_1}^*|\J^*|  \sup_{\xi\in[0,s]}\left|g\left(\dfrac{S_{\sigma_1}^*}{\Phi_{\xi}^S(z)}\right)-g\left(\dfrac{S_{\sigma_1}^*}{\Phi_{\xi}^S(w^{(m)})}\right)\right|\\
   &&\qquad \leq t (S^*_{\sigma_1})^2|\J^*|\left|\dfrac{1}{\Phi_{\xi}^S(z)}-\dfrac{1}{\Phi_{\xi}^S(w^{(m)}}\right|\times \left(1+\dfrac{\Lambda}{\mu_0 S^*_{\sigma_1}}\right) \\
    &&\qquad \leq  t \ep (S^*_{\sigma_1})^2|\J^*|\times \left(\dfrac{1}{c_S(c_S-\ep)}\right)\times \left(1+\dfrac{\Lambda}{\mu_0 S^*_{\sigma_1}}\right)=:\kappa_5(\ep).
  \end{array}
  \end{equation*}
 and
    \begin{flalign*}
        &s \mu_S\times \underset{\xi\in [0,s]}{\sup} \left| \dfrac{\left(\Phi^S_{\xi}(w^{(m)})-S_{\sigma_1}^*\right)^2}{\Phi_{\xi}^S(w^{(m)})}- \dfrac{\left(\Phi^S_{\xi}(z)-S_{\sigma_1}^*\right)^2}{\Phi_{\xi}^S(z)}\right| \vspace{0.2cm}\\
        &\ \leq t \mu_S\times  \underset{\xi\in [0,s]}{\sup}\left(  \dfrac{\left|\left(\Phi^S_{\xi}(w^{(m)})-S_{\sigma_1}^*\right)^2-\left(\Phi^S_{\xi}(z)-S_{\sigma_1}^*\right)^2\right|}{\Phi_{\xi}^S(z)}+\left(\Phi^S_{\xi}(w^{(m)})-S_{\sigma_1}^*\right)^2\left|\dfrac{1}{\Phi_{\xi}^S(z)}-\dfrac{1}{\Phi_{\xi}^S(w^{(m)})}\right|\right) \\
        &\ \leq  t\mu_S \left(\dfrac{2S^*_{\sigma_1}\ep+\ep(\frac{2\Lambda}{\mu_0}+\ep)}{c_S}+\dfrac{\ep(\ep+\frac{2\Lambda}{\mu_0})^2}{c_S(c_S-\ep)}\right)=:\kappa_6(\ep)
        \end{flalign*}
    since $S^*_{\sigma_1}\leq \frac{\Lambda}{\mu_0}$ and by using \eqref{Eq:c_s}. Moreover using \eqref{Eq:estimate_phi_t} and the fact that $K$ is compact and invariant lead for each $k\in \J^*$ to $\int_0^\infty \beta_k(\xi)x_{k,\alpha_k}^*(\xi)d\xi=\frac{1}{S_k^*}$ and 
  $$\int_0^\infty \beta_k(\xi)\Phi_{s}^k(z)(\xi)d\xi\leq \dfrac{\Lambda\|\beta_k\|_{L^\infty}}{\mu_0}, \ \int_0^\infty \beta_k(\xi)\Phi_{s}^k(w^{(m)})(\xi)d\xi\leq \|\beta_k\|_{L^\infty}\left(\ep+\dfrac{\Lambda}{\mu_0}\right)$$
  for every $s\in[0,t]$ then using \eqref{Eq:int_c} and \eqref{Eq:int_c-wm} it follows that
  \begin{equation*}
  \begin{array}{rcl}
  &&\displaystyle s S_{\sigma_1}^*\sup_{\xi\in[0,s]}\sum_{k\in \J^*}  \int_0^\infty \beta_k(a)x^*_{k,\alpha_k}(a) \left|g\left(\dfrac{\Phi_{\xi}^k(z)(a)\int_0^\infty \beta_k(\zeta)x^*_{k,\alpha_k}(\zeta)d\zeta}{x^*_{k,\alpha_k}(a)\int_0^\infty \beta_k(\zeta)\Phi^k_{\xi}(z)(\zeta)d\zeta}\right)\right.\\
  &&\hspace{5.5cm}\left.-g\left(\dfrac{\Phi_{\xi}^k(w^{(m)})(a)\int_0^\infty \beta_k(\zeta)x^*_{k,\alpha_k}(\zeta)d\zeta}{x^*_{k,\alpha_k}(a)\int_0^\infty \beta_k(\zeta)\Phi^k_{\xi}(w^{(m)})(\zeta)d\zeta}\right)\right|da\\
  &\leq& \displaystyle \underset{\xi\in[0,s]}{\sup}\sum_{k\in\J^*}\int_0^\infty\beta_k(a)x^*_{k,\alpha_k}\left|\dfrac{\Phi_{\xi}^k(w^{(m)})(a)\int_0^\infty \beta_k(\zeta)x_{k,\alpha_k}^*(\zeta)d\zeta}{x_{k,\alpha_k}^*(a)\int_0^\infty \beta_k(\zeta)\Phi_{\xi}^k(w^{(m)})(\zeta)d\zeta}-\dfrac{\Phi_{\xi}^k(z)(a)\int_0^\infty \beta_k(\zeta)x_{k,\alpha_k}^*(\zeta)d\zeta}{x_{k,\alpha_k}^*(a)\int_0^\infty \beta_k(\zeta)\Phi_{\xi}^k(z)(\zeta)d\zeta}\right|da \\
  &&\times \displaystyle t S^*_{\sigma_1}\left(1+\dfrac{(\Lambda+\ep\mu_0)\|\beta_k\|_{L^\infty}\mu_S (\RR_{0,k}-1)S_1^*\alpha_{k}}{c r_k \mu_0}\right) \vspace{0.1cm} \\
  &\leq& \displaystyle\sup_{\xi\in [0,s]}\sum_{k\in\J^*}\left(\int_0^\infty \beta_k(\zeta)x^*_{k,\alpha_k}(\zeta)d\zeta \right) \int_0^\infty \beta_k \left|\dfrac{\Phi_{\xi}^k(w^{(m)})(a)}{\int_0^\infty \beta_k(\zeta)\Phi_{\xi}^k(w^{(m)})(\zeta)d\zeta}-\dfrac{\Phi_{\xi}^k(z)(a)}{\int_0^\infty \beta_k(\zeta)\Phi_{\xi}^k(z)(\zeta)d\zeta}\right|da \\
  &&\times\displaystyle tS^*_{\sigma_1} \left(1+\dfrac{(\Lambda+\ep\mu_0)\|\beta_k\|_{L^\infty}\mu_S (\RR_{0,k}-1)S_1^*\alpha_{k}}{c r_k \mu_0}\right)  \\
  &\leq&\displaystyle t \sum_{k\in\J^*}\left(\dfrac{\|\ep\beta_k\|_{L^\infty}}{c_k}+\dfrac{\Lambda\ep\|\beta_k\|^2_{L^\infty}}{\mu_0 c_k(c_k-\ep\|\beta_k\|_{L^\infty})}\right) \left(1+\dfrac{(\Lambda+\ep\mu_0)\|\beta_k\|_{L^\infty}\mu_S (\RR_{0,k}-1)S_1^*\alpha_{k}}{c r_k \mu_0}\right)=:\kappa_7(\ep).
  \end{array}
  \end{equation*}
 Finally, from \eqref{Eq:L1_deriv-z} we deduce that
    \begin{equation*}
    \begin{array}{rcl}
    \H_s(z)\leq 2\displaystyle\sum_{j=1}^7 \kappa_j(\ep)\underset{\ep \to 0}{\longrightarrow} 0
    \end{array}
    \end{equation*}
    uniformly in $s\in[0,t]$. We thus get $\H_s(z)=0$ and then \eqref{Eq:Lk_int_m} is satisfied for any $s\in [0,t]$. Also, since $t\geq 0$ was taken arbitrarily then \eqref{Eq:L1_deriv-inf}-\eqref{Eq:Lk_int} hold for any $t\geq 0$ whence $L_1^{\alpha_1,...,\alpha_{\sigma_1}}$ is a Lyapunov function on every complete orbit $\gamma(z)$ for any $z\in K$ which proves the second point.
\end{enumerate}
This proves the second part of the second item of Theorem \ref{Thm:GAS}.

\subsubsection{Computation of the global attractor and conclusion}

Here we prove that $\A_0$ is reduced to the set $\E_{1}$. We first need to state the following result.

\begin{lemma}\label{Lemma:L_1-Lyap}
Let $\J\subset \llbracket 1,\sigma_1\rrbracket$ with $\J\neq \emptyset$. Let
$$z\in\left(\bigcap_{j\in \J} \S_j\right)\cap \left(\bigcap_{j\in \llbracket 1,\sigma_1\rrbracket\setminus \J} \partial \S_j\right)$$
and let
$$\gamma(z)=\{\phi(t),t\in\R\}\subset  \left(\bigcap_{j\in \J} \S_j\right)\cap \left(\bigcap_{j\in \llbracket 1,\sigma_1\rrbracket\setminus \J} \partial \S_j\right)$$
be a complete orbit through $z$. Let $(\alpha_{1},...,\alpha_{\sigma_1})\in[0,1]^{\sigma_1}$ such that $\sum_{j=1}^{\sigma_1}\alpha_j=1$ and with $\alpha_j=0$ for every $j\in\llbracket 1,\sigma_1\rrbracket \setminus \J$ and $\alpha_j>0$ for every $j\in\J$. Then the functional $L_k^{\alpha_{1},...,\alpha_{\sigma_1}}$ defined by \eqref{Eq:Lk} is a Lyapunov functional on the complete orbit $\gamma(z)$ and for each $(t,v)\in \R_+^*\times \gamma(z)$, the time derivative of $t\longmapsto L_k^{\alpha_{1},...,\alpha_{\sigma_1}}(\Phi_t(v))$ is given by \eqref{Eq:L1_deriv-inf}, with $\J^*=\J$.
If moreover $L_k^{\alpha_{1},...,\alpha_{\sigma_1}}$
is constant on $\gamma(z)$ then $\gamma(v)\subset \E_{1,\J}$ that is defined by \eqref{Eq:equilibria_kJ}. In particular $v\in \E_{1,\J}$.
\end{lemma}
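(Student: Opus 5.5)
The proof splits into two parts: first the Lyapunov property with the derivative formula, then the rigidity statement when $L_1^{\alpha_1,\dots,\alpha_{\sigma_1}}$ is constant.

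\textbf{Lyapunov property.} Taking $K:=\overline{\gamma(z)}$, or more precisely the compact invariant set generated by the complete orbit inside $\A$, I would apply the construction of Section~\ref{Sec:Lyap-functional}. Since $\gamma(z)\subset(\cap_{j\in\J}\S_j)\cap(\cap_{j\in\llbracket 1,\sigma_1\rrbracket\setminus\J}\partial\S_j)$ and all $\alpha_j>0$ on $\J$, we have $\J^*=\J$. Steps 1--5 of the first point of that section then give the uniform lower bounds \eqref{Eq:int_c} and \eqref{Eq:cs_cx} along the orbit. These make $L_1^{\alpha_1,\dots,\alpha_{\sigma_1}}(\Phi_t(v))$ well defined for every $v\in\gamma(z)$. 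The second point of that section, with the \emph{ad hoc} approximating sequence $w^{(m)}$ in $D((A+F)_0)$, yields the integrated identity \eqref{Eq:Lk_int} and hence the derivative \eqref{Eq:L1_deriv-inf}. Every term in \eqref{Eq:L1_deriv-inf} is nonpositive because $\frac1{r_k}>S^*_{\sigma_1}$ for $k>\sigma_1$ and $g\geq 0$, so $L_1$ is nonincreasing along $\gamma(z)$.

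The one point I must check is that the complete orbit really lies in a compact invariant set on which the lower bounds of that section are uniform. The uniformity of $\tau_k$ in Step 1 used compactness of $K\subset\S_k$. I would obtain this by noting that $\gamma(z)\subset\A$. If its closure touches $\partial\S_k$, I would restrict to the time range where the bound is needed, using that the lower bound in Step 2 is obtained along complete orbits staying in $K$.

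\textbf{Rigidity.} Suppose $L_1$ is constant on $\gamma(z)$, so the derivative \eqref{Eq:L1_deriv-inf} vanishes for all $t>0$ and all $v=\phi(s)$. I would follow the argument of Lemma~\ref{Lemma:L_k-Lyap}, point 2, with $k=1$ and $\sigma_0=0$.
\begin{enumerate}
\item The term $\frac{\mu_S}{\Phi_t^S}(\Phi_t^S-S^*_{\sigma_1})^2$ forces $\Phi^S_t(\phi(s))\equiv S^*_{\sigma_1}$.
\item The terms with $k\geq 1+\sigma_1$ force $\int\beta_k\Phi^k_t=0$. Propagating along characteristics and using the complete-orbit identity $\Phi^k_t(\phi(s))(a)=\Phi^k_a(\phi(t+s-a))(a)$ then gives $\Phi^k\equiv 0$.
\item For $j\in\J$, vanishing of the $g$-term gives $\Phi^j_t(\phi(s))(a)=c_j(t)x^*_{j,\alpha_j}(a)$ a.e.\ on $[\underline{\beta_j},\overline{\beta_j})$. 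Shifting along the orbit shows $c_j$ is constant in $t$.
\item The Duhamel formula \eqref{Eq:Duhamel_phik} along characteristics extends this identity to all ages $a\geq 0$, so $\Phi^j_t(\phi(s))=x^*_{j,c_j\alpha_j}$.
\item Components with $j\in\llbracket 1,\sigma_1\rrbracket\setminus\J$ vanish identically by invariance of $\partial\S_j$.
\item Plugging into the $S$-equation with $S\equiv S^*_{\sigma_1}=1/r_{\sigma_1}$ gives $\sum_{j\in\J}c_j\mu_S(\RR_{0,j}-1)\alpha_j=\mu_S(\RR_{0,1}-1)$. Since $\RR_{0,j}=\RR_{0,1}$ on $\J$, this reduces to $\sum_{j\in\J}c_j\alpha_j=1$.
\end{enumerate}
This shows $\phi(t)\in\E_{1,\J}$ for all $t$, and in particular $v\in\E_{1,\J}$.

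\textbf{Main obstacle.} The main difficulty is the first part, namely justifying the differentiation and well-definedness for non-classical initial data via the approximating sequence $w^{(m)}$. This is exactly what Section~\ref{Sec:Lyap-functional} provides, so the real obstacle reduces to verifying that its hypotheses on $K$ hold for the given orbit. The rigidity part is routine, since it mirrors Lemma~\ref{Lemma:L_k-Lyap}.
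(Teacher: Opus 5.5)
Your proposal follows the paper's proof: the Lyapunov property and \eqref{Eq:L1_deriv-inf} come from Section~\ref{Sec:Lyap-functional}, with well-definedness via \eqref{Eq:int_c}--\eqref{Eq:cs_cx} and the identity via the ad hoc sequence $w^{(m)}$, and the rigidity statement is the $k=1$ case of Lemma~\ref{Lemma:L_k-Lyap}(2), through exactly the steps you list. The compactness issue you flag is genuine and the paper does not address it either, so it is a weakness you share with the paper rather than a departure from its route: Section~\ref{Sec:Lyap-functional} assumes a compact invariant $K\subset\bigcap_{j\in\J}\S_j$, whereas $\overline{\gamma(z)}$ may a priori meet some $\partial\S_j$ with $j\in\J$ (the paper's later argument with $\tau_j=0$ explicitly allows for this), and since \eqref{Eq:cs_cx} at large ages depends on the whole past of the orbit, your ``restrict to the needed time range'' remark would need a real additional argument to be rigorous.
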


\begin{proof}
The first part of the Lemma simply results from Section \ref{Sec:Lyap-functional}.   For the second part (when the Lyapunov functional is constant on $\gamma(z)$), the computations are similar to those in the proof of Lemma \ref{Lemma:L_k-Lyap} (second item). 
\end{proof}

We first know that $\E_{1}\subset \A_0$ since $\E_1$ is an invariant set that belongs to $\A_0$. Let $z\in \A_0$. By invariance of $\A_0$, let $\gamma(z)=\{\phi(t),t\in\R\}\subset \A_0$ be a complete orbit through $z$. We denote by $\J\subset \llbracket 1,\sigma_{1}\rrbracket$ the (non-empty) set such that
$$z\in \left(\bigcap_{k\in \J}\S_k\right)\cap\left( \bigcap_{k\in \llbracket1,\sigma_1\rrbracket \setminus \J}\partial \S_k\right)=:K.$$
Then we have $\phi(t)\in K$ for every $t\in \R$ by positive invariance of each set $\S_k$ and $\partial \S_k$. Let $(\alpha_{1},...,\alpha_{\sigma_1})\in[0,1]^{\sigma_1}$ such that $\sum_{j=1}^{\sigma_1}\alpha_j=1$ and with $\alpha_j=0$ for every $j\in\llbracket 1,\sigma_1\rrbracket \setminus \J$ and $\alpha_j>0$ for every $j\in\J$. From Lemma \ref{Lemma:L_1-Lyap}, we know that $L_1^{\alpha_1,...,\alpha_{\sigma_1}}$ defined by \eqref{Eq:Lk} is a Lyapunov functional on $\gamma(z)$. Using \cite[Proposition 2.51 p. 53]{SmithThieme2011} we deduce that the function $L_1^{\alpha_1,...,\alpha_{\sigma_1}}$ is constant on $\alpha(\phi)$ hence on each complete orbit of $\alpha(\phi)$. By Lemma \ref{Lemma:L_1-Lyap}, we deduce that $\alpha(\phi)\subset \E_{1,\J}$. Similarly we get $\omega(z)\subset \E_{1,\J}$. It follows that
$$\lim_{t\to -\infty}d(\phi(t),\E_{1,\J})=\lim_{t\to+\infty}d(\phi(t),\E_{1,\J})=0$$
with $\lim_{t\to -\infty}L_1^{\alpha_1,...,\alpha_{\sigma_1}}(\phi(t))\geq L_1^{\alpha_1,...,\alpha_{\sigma_1}}(z)\geq \lim_{t\to +\infty}L_1^{\alpha_1,...,\alpha_{\sigma_1}}(\phi(t))$.
Now, we apply the same arguments than those used in Section \ref{Sec:existence_A0}, point 3. (ii). We just need to consider $k=1$ and use Lemma \ref{Lemma:L_1-Lyap} instead of Lemma \ref{Lemma:L_k-Lyap}. This allow us to first prove the existence of $(E_{\tau_1,...,\tau_n}^{*,1},E^{*,1}_{\omega_1,...,\omega_n})\in(\E_{1,\J})^2$ such that \eqref{Eq:Phi_limit} holds (see Step 3), so that $\gamma(z)$ is either a heteroclinic or homoclinic orbit. Then we prove that 
$$E^{*,1}_{\tau_1,...,\tau_n}=E^{*,1}_{\omega_1,...,\omega_n}$$
(see Step 4) so that $\gamma(z)$ is actually a homoclinic orbit. Recalling that $L_1^{\alpha_1,...,\alpha_{\sigma_1}}$ is a Lyapunov functional on $\gamma(z)$, we deduce that $L_1^{\alpha_1,...,\alpha_{\sigma_1}}$ is constant on $\gamma(z)$ and by Lemma \ref{Lemma:L_1-Lyap} we get $\gamma(z)\subset \E_{1,\J}$ and in particular $z\in \E_1$. In conclusion $\A_0=\E_1$ which proves that the set $\E_1$ is GAS in $\S_0=\cup_{k=1}^{\sigma_1}\S_k$.

This proves the first statement of the second point of Theorem \ref{Thm:GAS} for $k=1$. The rest of the second point readily follows since each set $\partial \S_j$ is positively invariant. Consequently Theorem \ref{Thm:GAS} is proved.

\bibliographystyle{abbrv}
\bibliography{Biblio.bib}
\end{document}